\renewcommand{\algorithmiccomment}[1]{\hfill $\triangleright$ \emph{#1}}
\crefname{hypothesis}{Hypothesis}{Hypotheses}
\newcommand*{\addFileDependency}
[1]{
\typeout{(#1)}
\@addtofilelist{#1}
\IfFileExists{#1}{}{\typeout{No file #1.}}
}
\newcounter{example}
\newenvironment{example}{\refstepcounter{example}\vspace{1ex}
{\sc Example \theexample.}\hspace{0.3em}\parindent=0pt}{\vspace{1ex}}
\begin{document}
\title{SuperDC: Stable superfast divide-and-conquer eigenvalue
decomposition\thanks{The research of Jianlin Xia was supported in part by an
NSF grant DMS-1819166.}}
\author{Xiaofeng Ou\thanks{Department of Mathematics, Purdue University, West
Lafayette, IN 47907 (ou17@purdue.edu, xiaj@purdue.edu).}
\and Jianlin Xia\footnotemark[2]}
\maketitle

\begin{abstract}
For dense Hermitian matrices with small off-diagonal (numerical) ranks and in
a hierarchically semiseparable form, we give a stable divide-and-conquer
eigendecomposition method with nearly linear complexity (called SuperDC) that
significantly improves an earlier basic algorithm in [Vogel, Xia, et al., SIAM
J. Sci. Comput., 38 (2016)]. We incorporate a sequence of key stability
techniques and provide many improvements in the algorithm design. Various
stability risks in the original algorithm are analyzed, including potential
exponential norm growth, cancellations, loss of accuracy with clustered
eigenvalues or intermediate eigenvalues, etc. In the dividing stage, we give a
new structured low-rank update strategy with balancing that eliminates the
exponential norm growth and also minimizes the ranks of low-rank updates. In
the conquering stage with low-rank updated eigenvalue solution, the original
algorithm directly uses the regular fast multipole method (FMM) to accelerate
function evaluations, which has the risks of cancellation, division by zero,
and slow convergence. Here, we design a triangular FMM to avoid cancellation.
Furthermore, when there are clustered intermediate eigenvalues or when updates
to existing eigenvalues are very small, we design a novel local shifting
strategy to integrate FMM accelerations into the solution of shifted secular
equations so as to achieve both the efficiency and the reliability. We also
provide several improvements or clarifications on some structures and
techniques that are missing or unclear in the previous work. The resulting
SuperDC eigensolver has significantly better stability while keeping the
nearly linear complexity for finding the entire eigenvalue decomposition. In a
set of comprehensive tests, SuperDC shows dramatically lower runtime and
storage than the Matlab \textsf{eig} function. The stability benefits are also
confirmed with both analysis and numerical comparisons.

\end{abstract}

\begin{keywords}
superfast eigenvalue decomposition, stable divide-and-conquer eigensolver, rank-structured matrix, triangular fast multipole method, shifted secular equation, local shifting
\end{keywords}

\begin{AMS}
65F15, 65F55, 15A18, 15A23
\end{AMS}

\section{Introduction}


In this paper, we consider the full eigenvalue decomposition of $n\times n$
Hermitian matrices $A$ with small off-diagonal ranks or numerical\ ranks. Such
matrices belong to the class of rank-structured matrices. Examples include
banded matrices with finite bandwidth, Toeplitz matrices in Fourier space,
some matrices arising from discretized PDEs and integral equations, some
kernel matrices, etc. The eigenvalue decompositions are very useful for
computations such as matrix function evaluations, discretized linear system
solutions, matrix equation solutions, and quadrature approximations. They are
also very useful for fields such as optimization, imaging, Gaussian processes,
and machine learning. In addition, Hermitian eigendecompositions can be used
to compute SVDs of non-Hermitian matrices.

There are several types of rank-structured forms such as $\mathcal{H}$/$\mathcal{H}^{2}$ matrices \cite{hack2002h2, hack1999}, hierarchical
semiseparable (HSS) matrices \cite{cha06,fasthss},
quasiseparable/semiseparable matrices \cite{cha05,van08}, BLR matrices
\cite{ame15}, and HODLR matrices \cite{amb13}. Examples of eigensolvers for
these rank-structured methods include divide-and-conquer methods
\cite{cha04,eid12,lia16,sus21,hsseig}, QR iterations
\cite{bini2005,cha2007,eidelman2005,van2010}, and bisection
\cite{benner2012,xi2014}. Other methods like in \cite{bin91,gu95} have also
been used in the acceleration of relevant eigenvalue solutions.

Our work here focuses on the divide-and-conquer method for HSS\ matrices (that
may be dense or sparse). The divide-and-conquer method has previously been
well studied for tridiagonal matrices (which may be considered as special
HSS\ forms). See, e.g., \cite{lapack,bun78,cuppen1980,dong87,gu95,ole90}. In
particular, a stable version is given in \cite{gu95}. The algorithms can
compute all the eigenvalues in $O(n^{2})$ flops and can compute the
eigenvectors in $O(n^{3})$ flops. It is also mentioned in \cite{gu95} that it
is possible to accelerate the operations in the divide-and-conquer process via
the fast multipole method (FMM) \cite{gre87} so as to reach nearly linear
complexity. However, this has not actually been done in \cite{gu95} or later
relevant work \cite{cha04,lia16}, until more recently in \cite{hsseig} where a
divide-and-conquer algorithm is designed for HSS matrices without the need of
tridiagonal reductions. For an HSS\ matrix with off-diagonal ranks bounded by
$r$ (which may be a constant or a power of $\log n$), the method in
\cite{hsseig} computes a structured eigendecomposition in $O(r^{2}n\log^{2}n)$
flops with storage $O(rn\log n)$. The method is then said to be
\emph{superfast}.

The work in \cite{hsseig} gives a proof-of-concept study of superfast
eigendecompositions for HSS\ matrices $A$. Yet it does not consider some
crucial stability issues in the HSS divide-and-conquer process, such as the
risks of exponential norm growth and potential cancellations in some function
evaluations. Moreover, it does not incorporate several key stability
measurements that are otherwise used in practical tridiagonal
divide-and-conquer algorithms. In fact, these limitations are due to some
major challenges in combining FMM accelerations with those stability
measurements. More specifically, the limitations are as follows.

\begin{enumerate}
\item During the dividing stage, the diagonal blocks of $A$ (also as
HSS\ blocks) are repeatedly updated along a top-down hierarchical tree
traversal. If some upper-level off-diagonal blocks have large norms, the
updated HSS blocks will have subblocks whose norms grow exponentially in the
hierarchical update. This brings stability risks and may even cause overflow,
as can be seen in one of our test examples later.

\item In the conquering stage, the eigenvalues are solved via modified
Newton's method applied to some secular equations. Relevant function
evaluations are assembled into matrix-vector products so as to apply FMM
accelerations. In practical secular equation solution, a function evaluation
may be split into two (say, for the positive terms and negative terms in a
summation) so as to avoid cancellation and also to employ different
interpolation methods \cite{bun78,li93}. Such splitting depends on individual
eigenvalues, so that the usual FMM\ acceleration cannot apply. (See Section
\ref{subsub:assemble}.) In \cite{hsseig}, the FMM\ is used directly without
such splitting, which gives another stability risk.

\item The eigenvalues and eigenvectors are found through a sequence of
intermediate eigenvalue problems. The FMM\ is used to accelerate multiple
parts of the process. When the eigenvalues of $A$ or any of the intermediate
eigenvalue problems are clustered or when an updated eigenvalue is close to a
previous one, the FMM acceleration applied to the standard secular equation
solution will likely lose accuracy or even encounter division by zero due to
catastrophic cancellation. Furthermore, it also impacts the convergence of the
iterative solution and the orthogonality of the eigenvectors. In practical
tridiagonal divide-and-conquer implementations, the issues are nice resolved
through the solution of some \emph{shifted} secular equations for some
eigenvalue gaps. However, such shifting is eigenvalue dependent and there is
no uniform shift that works for all the eigenvalues. This makes it difficult
to apply FMM\ accelerations. (See Section \ref{subsub:shift} for the details.)
Again, the algorithm in \cite{hsseig} directly applies FMM\ accelerations to
standard secular equations without shifting. This is then potentially
dangerous for practical use.

\item In addition, the algorithm in \cite{hsseig} is presented in a
superficial way and some essential components are missing or unclear. It
especially misses the treatment of closely clustered (intermediate)
eigenvalues and small updates to eigenvalues.
\end{enumerate}

The main purpose of this paper is then to overcome these limitations. That is,
we seek to design a stable and superfast divide-and-conquer eigensolver
(called SuperDC) for $A$ in an HSS\ form so as to find an approximate
eigenvalue decomposition
\begin{equation}
A\approx Q\Lambda Q^{T}, \label{eq:eig}\end{equation}
where, for convenience, $A$ is supposed to be real and symmetric since the
ideas can be immediately extended to the Hermitian case, $\Lambda$ is a
diagonal matrix for the eigenvalues, and $Q$ is for the orthogonal
eigenvectors. Also for convenience, we call the matrix $Q$ an
\emph{eigenmatrix}. As compared with the algorithm in \cite{hsseig}, we give a
sequence of techniques that resolves the stability issues. We also provide
many other improvements in terms of the reliability, efficiency, and certain
analysis. The main significance of the work includes the following.

\begin{enumerate}
\item We analyze why the original hierarchical dividing strategy in
\cite{hsseig} can lead to exponential norm growth or accumulation. We then
provide a stable dividing strategy. A\ balancing technique is designed and
guarantees that the norm growth is well under control. We can further save
later eigenvalue solution costs by appropriately tuning the low-rank updates
so as to minimize the rank of the low-rank update.

\item In the solution of the secular equations, when a function evaluation is
split into two for the stability purpose, we design a triangular FMM\ that can
accommodate the eigenvalue dependence so as to stably accelerate the
matrix-vector multiplication resulting from assembling multiple function evaluations.

\item When shifted secular equations are used to handle clustered intermediate
eigenvalues or small eigenvalue updates, we design a \emph{local shifting}
strategy that makes it feasible to apply FMM\ accelerations. Different types
of FMM matrix blocks are treated differently and the feasibility is
justified.
The local shifting is a subtle yet effective way to integrate shifts into
FMM\ matrices without destroying the FMM structure. The major computations in
the eigenvalue decomposition can then be stably accelerated by the FMM. This
improves not only the accuracy, but also the convergence of secular equation solution.

\item We also provide various other improvements and give more precise
discussions on some important structures and techniques that are unavailable
or unclear in \cite{hsseig}. Examples include the precise structure of the
resulting eigenmatrix, the FMM-accelerated iterative eigenvalue solution, the
user-supplied eigenvalue deflation criterion, and also the tuning of the
low-rank updates.

\item All the stabilization techniques still nicely preserve the nearly linear
complexity. That is, the eigendecomposition complexity is still $O(r^{2}n\log^{2}n)$, with $O(rn\log n)$ storage, in contrast with the $O(n^{3})$
complexity and $O(n^{2})$ storage of the classical tridiagonal
divide-and-conquer eigensolver (not to mention that no extra tridiagonal
reduction is needed for dense HSS matrices).

\item We provide comprehensive numerical tests in terms of different types of
matrices with a SuperDC\ package in Matlab. For modest matrix sizes $n$,
SuperDC already has a significantly lower runtime and storage than the Matlab
\textsf{eig} function while producing nice accuracy. In a Toeplitz example
below with $n=32,768$, SuperDC is already about $136$ times faster than
\textsf{eig} with only about $1/15$ of the memory. We also demonstrate the
benefits of our stability techniques in the numerical tests.
\end{enumerate}

In the remaining sections, we begin in Section \ref{sec:review} with a quick
review of the basic HSS divide-and-conquer eigensolver in \cite{hsseig}. Then
the improved stable structured dividing strategy is discussed in Section
\ref{sec:divide}, followed by the stable structured conquering scheme in
Section \ref{sec:conquer}. Section \ref{sec:tests} gives some comprehensive
numerical experiments to demonstrate the efficiency and accuracy. Then Section
\ref{sec:concl} concludes the paper. A\ list of the major algorithms is given
in the supplementary materials.

Throughout this paper, the following notation is used.

\begin{itemize}
\item Lower-case letters in bold fonts like $\mathbf{u}$ are used to denote vectors.

\item $(A_{ij})_{n\times n}$ means an $n\times n$ matrix with the
$(i,j)$-entry $A_{ij}$. Sometimes, a matrix defined by the evaluation of a
function $\kappa(s,t)$ at points $s_{i}$ in a set $\mathbf{s}$ and $t_{j}$ in
a set $\mathbf{t}$ is written as $(\kappa(s_{i},t_{j}))_{s_{i}\in
\mathbf{s},t_{j}\in\mathbf{t}}$.


\item $\operatorname{diag}(\cdots)$ denotes a (block) diagonal matrix.

\item $\operatorname{rowsize}(A)$ and $\operatorname{colsize}(A)$ mean the row
and column sizes of $A$, respectively.

\item $\mathbf{u}\odot\mathbf{v}$ denotes the entrywise (Hadamard) product of
two vectors $\mathbf{u}$ and $\mathbf{v}$.

\item For a binary tree $\mathcal{T}$, we suppose it is in postordering so
that it has nodes $i=1,2,\ldots,\operatorname{root}(\mathcal{T})$, where
$\operatorname{root}(\mathcal{T})$ is the root.

\item $\operatorname{fl}(x)$ denotes the floating point result of $x$.

\item $\epsilon_{\operatorname{mach}}$ represents the machine epsilon.
\end{itemize}

\section{Review of the basic superfast divide-and-conquer
eigensolver\label{sec:review}}

We first briefly summarize the basic superfast divide-and-conquer eigensolver
in \cite{hsseig}. This will help make the understanding of later sections more
convenient. The eigensolver in \cite{hsseig} is a generalization of the
classical divide-and-conquer method for tridiagonal matrices to HSS matrices.

A symmetric HSS matrix $A$ \cite{fasthss} defined with the aid of a
postordered full binary tree $\mathcal{T}$ called \emph{HSS\ tree} has a
nested structure that looks like
\begin{equation}
D_{p}=\begin{pmatrix}
D_{i} & U_{i}B_{i}U_{j}^{T}\\
U_{j}B_{i}^{T}U_{i}^{T} & D_{j}\end{pmatrix}
, \label{eq:hss}\end{equation}
where $p\in\mathcal{T}$ has child nodes $i$ and $j$, so that $D_{p}$ with
$p=\operatorname{root}(\mathcal{T})$ is the entire HSS\ matrix $A$. Here, the
$U$ matrices are off-diagonal basic matrices and also satisfy a nested
relationship $U_{p}=\begin{pmatrix}
U_{i}R_{i}\\
U_{j}R_{j}\end{pmatrix}
$. The $D_{i},U_{i},B_{i}$ matrices are called \emph{HSS generators}
associated with node $i$. The maximum size of the $B$ generators is usually
referred to as the \emph{HSS\ rank} of $A$. We suppose the root of the HSS
tree $\mathcal{T}$ for $A$ is at level $0$, and the children of a node $i$ at
level $l$ are at level $l+1$.

The superfast divide-and-conquer eigensolver in \cite{hsseig} finds the
eigendecomposition (\ref{eq:eig}) of $A$ through a dividing stage and a
conquering stage as follows.

\subsection{Dividing stage\label{sub:div}}

In the dividing stage in \cite{hsseig}, $A$ and its submatrices are
recursively divided into block-diagonal HSS forms plus low-rank updates.
Starting with $p=\operatorname{root}(\mathcal{T})$, suppose $p$ has children
$i$ and $j$. $A=D_{p}$ in (\ref{eq:hss}) can be written as\begin{equation}
D_{p}=\begin{pmatrix}
D_{i}-U_{i}B_{i}B_{i}^{T}U_{i}^{T} & \\
& D_{j}-U_{j}U_{j}^{T}\end{pmatrix}
+\begin{pmatrix}
U_{i}B_{i}\\
U_{j}\end{pmatrix}
\left(
\begin{array}
[c]{cc}B_{i}^{T}U_{i}^{T} & U_{j}^{T}\end{array}
\right)  . \label{eq:divide}\end{equation}
For notational convenience, we suppose the HSS\ rank of $A$ is $r$ and each
$B$ generator has column size $r$. By letting\begin{equation}
\hat{D}_{i}=D_{i}-U_{i}B_{i}B_{i}^{T}U_{i}^{T},\quad\hat{D}_{j}=D_{j}-U_{j}U_{j}^{T},\quad Z_{p}=\begin{pmatrix}
U_{i}B_{i}\\
U_{j}\end{pmatrix}
, \label{eq:diagupd}\end{equation}
we arrive at\begin{equation}
D_{p}=\operatorname{diag}(\hat{D}_{i},\hat{D}_{j})+Z_{p}Z_{p}^{T}.
\label{eq:div}\end{equation}
Here, the diagonal blocks $D_{i}$ and $D_{j}$ are modified so that a rank-$r$
update $Z_{p}Z_{p}^{T}$ can be used instead of a rank-$2r$ update. Note that
the updates to $D_{i}$ and $D_{j}$ in (\ref{eq:diagupd}) follow different
patterns due to the $B_{i}B_{i}^{T}$ term. In \cite{hsseig}, a term $B_{j}^{T}B_{j}$ appears in the update to $D_{j}$ but not $D_{i}$ and there is no
guidance in \cite{hsseig} on which diagonal to put the $B_{i}B_{i}^{T}$ or
$B_{j}^{T}B_{j}$ term. Here, we put $B_{i}B_{i}^{T}$ in the update to $D_{i}$
for the convenience of presentation. Later in our new method, we will give a
clear strategy for this based on the minimization of the column size of
$Z_{p}$ (informally referred to as the \emph{rank of the low-rank update} for convenience).

During this process, the blocks $\hat{D}_{i}$ and $\hat{D}_{j}$ remain to be
HSS\ forms. In fact, it is shown in \cite{hsseig,fasthss} that any matrix of
the form $D_{i}-U_{i}HU_{i}^{T}$ can preserve the off-diagonal basis matrices
of $D_{i}$. Specifically, the following lemma can be used for generator updates.

\begin{lemma}
\label{lem:genupd}\cite{hsseig} Let $\mathcal{T}_{i}$ be the subtree of the
HSS tree $\mathcal{T}$ that has the node $i$ as the root. Then $D_{i}-U_{i}HU_{i}^{T}$ has HSS\ generators $\tilde{D}_{k},\tilde{U}_{k},\tilde
{R}_{k},\tilde{B}_{k}$ for each node $k\in\mathcal{T}_{i}$ as follows:
\begin{align}
\tilde{U}_{k}  &  =U_{k},\quad\tilde{R}_{k}=R_{k},\nonumber\\
\tilde{B}_{k}  &  =B_{k}-(R_{k}R_{k_{l}}\cdots R_{k_{1}})H(R_{k_{1}}^{T}\cdots
R_{k_{l}}^{T}R_{\tilde{k}}^{T}),\label{eq:genupd}\\
\tilde{D}_{k}  &  =D_{k}-U_{k}(R_{k}R_{k_{l}}\cdots R_{k_{1}})H(R_{k_{1}}^{T}\cdots R_{k_{l}}^{T}R_{k}^{T})U_{k}^{T}\quad\text{for a leaf }k\text{,}\nonumber
\end{align}
where $\tilde{k}$ is the sibling node of $k$ and $k\rightarrow k_{l}\rightarrow\cdots\rightarrow k_{1}\rightarrow i$ is the path connecting $k$ to
$i$. Accordingly, $D_{i}-U_{i}HU_{i}^{T}$ and $D_{i}$ have the same
off-diagonal basis matrices.
\end{lemma}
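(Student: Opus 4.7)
The plan is to reduce the whole statement to one block-identification argument built on the telescoping of the nested basis relation $U_p=\begin{pmatrix}U_{c_1}R_{c_1}\\ U_{c_2}R_{c_2}\end{pmatrix}$. First I would prove by induction on the path length $k\to k_l\to\cdots\to k_1\to i$ that the row block of $U_i$ indexed by an arbitrary descendant $k\in\mathcal{T}_i$ equals $U_k(R_kR_{k_l}\cdots R_{k_1})$. The base case $k=k_1$ (a child of $i$) is the nested relation itself, and the inductive step consists of a single further application of the same relation to $U_{k_l}$. Writing $P_{k,i}:=R_kR_{k_l}\cdots R_{k_1}$, this immediately yields the $(k,k')$ block of $U_iHU_i^T$ for any two descendants as
\[
U_k\,P_{k,i}\,H\,P_{k',i}^T\,U_{k'}^T.
\]

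Second, I would match these blocks against the native HSS partition of $D_i$ node by node. Every off-diagonal block of $D_i$ associated with a sibling pair $(k,\tilde{k})$ whose common parent is $k_l$ has the form $U_kB_kU_{\tilde{k}}^T$; subtracting the corresponding block of $U_iHU_i^T$ peels off the outer factors $U_k$ and $U_{\tilde{k}}^T$ and leaves the middle factor $B_k-P_{k,i}HP_{\tilde{k},i}^T$, which coincides exactly with the $\tilde{B}_k$ prescribed in (\ref{eq:genupd}) once one expands $P_{\tilde{k},i}^T=R_{k_1}^T\cdots R_{k_l}^T R_{\tilde{k}}^T$. For a leaf $k\in\mathcal{T}_i$ the diagonal block is corrected by exactly $U_kP_{k,i}HP_{k,i}^TU_k^T$, giving the stated $\tilde{D}_k$. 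Because the outer factors $U_k$ survive every block identification untouched, $\tilde{U}_k=U_k$ and consequently $\tilde{R}_k=R_k$ come for free from the nested relation at each internal node, and the final sentence of the lemma (preservation of the off-diagonal basis matrices) is just a restatement of this.

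The main obstacle is purely notational: one must track the path products consistently on both sides of $H$, noting that for siblings $k,\tilde{k}$ the paths to $i$ agree from $k_l$ upward and differ only in the first step, which is precisely what produces the asymmetric-looking form $(R_kR_{k_l}\cdots R_{k_1})H(R_{k_1}^T\cdots R_{k_l}^T R_{\tilde{k}}^T)$ in (\ref{eq:genupd}). There are no analytic subtleties, no rank conditions on $H$, and no cancellation issues to worry about; the claim is a purely algebraic identity describing how a symmetric low-rank perturbation expressed in the top-level basis $U_i$ redistributes across the generators of the subtree $\mathcal{T}_i$, and the only thing the proof has to do is make the redistribution visible block by block.
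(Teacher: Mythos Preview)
Your proposal is correct. The paper does not actually supply a proof of this lemma; it is quoted from \cite{hsseig} (with \cite{fasthss} also cited for the underlying fact), so there is no in-paper argument to compare against. Your telescoping of the nested basis relation to obtain the row block of $U_i$ indexed by a descendant $k$ as $U_k(R_kR_{k_l}\cdots R_{k_1})$, followed by block-by-block identification of $U_iHU_i^T$ against the HSS partition of $D_i$, is precisely the standard verification and is what one finds in the cited references; your remark that the only content is notational bookkeeping of the path products is exactly right.
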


Thus, the HSS generators of $\hat{D}_{i}$ and $\hat{D}_{j}$ can be
conveniently obtained via the generator update procedure (\ref{eq:genupd}).
Then the dividing process can continue on $\hat{D}_{i}$ and $\hat{D}_{j}$ like
above with $p$ in (\ref{eq:divide}) replaced by $i$ and $j$, respectively.

\subsection{Conquering stage\label{conquerstg}}

Suppose eigenvalue decompositions of the subproblems $\hat{D}_{i}$ and
$\hat{D}_{j}$ in (\ref{eq:diagupd}) have been computed as
\begin{equation}
\hat{D}_{i}=Q_{i}\Lambda_{i}Q_{i}^{T},\,\,\hat{D}_{j}=Q_{j}\Lambda_{j}Q_{j}^{T}. \label{eq:subeig}\end{equation}
Then from (\ref{eq:div}), we have
\begin{equation}
D_{p}=\operatorname{diag}(Q_{i},Q_{j})[\operatorname{diag}(\Lambda_{i},\Lambda_{j})+\hat{Z}_{p}\hat{Z}_{p}^{T}]\operatorname{diag}(Q_{i}^{T},Q_{j}^{T}), \label{eq:dp}\end{equation}
where
\begin{equation}
\hat{Z}_{p}=\operatorname{diag}(Q_{i}^{T},Q_{j}^{T})Z_{p}. \label{eq:zupdate}\end{equation}
Consequently, if we can solve the rank-$r$ update problem
\begin{equation}
\operatorname{diag}(\Lambda_{i},\Lambda_{j})+\hat{Z}_{p}\hat{Z}_{p}^{T}=\hat{Q}_{p}\Lambda_{p}\hat{Q}_{p}^{T}, \label{eq:rankrupd}\end{equation}
then the eigendecomposition of $D_{p}$ can be simply retrieved as
\begin{equation}
D_{p}=Q_{p}\Lambda_{p}Q_{p}^{T},\quad\text{with\quad}Q_{p}=\operatorname{diag}(Q_{i},Q_{j})\hat{Q}_{p}. \label{eq:dpeig}\end{equation}

The main task is then to compute the eigendecomposition of the low-rank update
problem (\ref{eq:rankrupd}). To this end, suppose $\hat{Z}_{p}=(\mathbf{z}_{1},\ldots,\mathbf{z}_{r})$, where $\mathbf{z}_{k}$'s are the columns. Then
(\ref{eq:rankrupd}) can be treated as $r$ rank-$1$ update problems
$\operatorname{diag}(\Lambda_{i},\Lambda_{j})+\sum_{k=1}^{r}\mathbf{z}_{k}\mathbf{z}_{k}^{T}$. A basic component is then to quickly find the
eigenvalue decomposition of a diagonal plus rank-$1$ update problem, assumed
to be of the form:
\begin{equation}
\tilde{\Lambda}+\mathbf{v}\mathbf{v}^{T}=\tilde{Q}\Lambda\tilde{Q}^{T},
\label{eq:rank1upd}\end{equation}
where $\tilde{\Lambda}=\operatorname{diag}(d_{1},\ldots,d_{n})$ with
$d_{1}\leq\cdots\leq d_{n}$, $\mathbf{v}=(v_{1},\ldots,v_{n})^{T}$, $\tilde
{Q}=(\mathbf{\tilde{q}}_{1},\ldots,\mathbf{\tilde{q}}_{n})$, and
$\Lambda=\operatorname{diag}(\lambda_{1},\ldots,\lambda_{n})$.

As in the standard divide-and-conquer eigensolver (see, e.g.,
\cite{lapack,cuppen1980,gu95}), finding $\lambda_{k}$ is essentially to solve
for the roots of the following secular equation \cite{gol73}:
\begin{equation}
f(x)=1+\sum_{k=1}^{n}\frac{v_{k}^{2}}{d_{k}-x}=0. \label{eq:seceq}\end{equation}
Newton iterations with rational interpolations may be used and cost $O(n^{2})$
to find all the $n$ roots. Once $\lambda_{k}$ is found, a corresponding
eigenvector looks like $\mathbf{\tilde{q}}_{k}=(\tilde{\Lambda}-\lambda
_{k}I)^{-1}v$. Typically, such an analytical form is not directly used due to
the stability concern. Instead, a method in \cite{gu95} based on L\"{o}wner's
formula can be used to obtain $\mathbf{\tilde{q}}_{k}$ stably.

It is also mentioned in \cite{gu95} that nearly $O(n)$ complexity may be
achieved by assembling multiple operations into matrix-vector multiplications
that can be accelerated by the FMM. This is first verified in \cite{hsseig},
where the complexity of the algorithm for finding the entire
eigendecomposition is $O(r^{2}n\log^{2}n)$ instead of $O(n^{3})$, with the
eigenmatrix $Q$ in (\ref{eq:eig}) given in a structured form that needs
$O(rn\log n)$ storage instead of $O(n^{2})$. In the following sections, we
give a series of stability measurements to yield a divide-and-conquer
eigensolver that is both superfast and stable.

\section{Stable structured dividing strategy\label{sec:divide}}

In this section, we point out a stability risk in the original dividing method
as given in (\ref{eq:divide})--(\ref{eq:diagupd}) and propose a more stable
dividing strategy. We also optimize the rank of the low-rank update.

The stability risk can be illustrated as follows. Consider $\hat{D}_{i}$ in
(\ref{eq:divide}) which is the result of updating $D_{i}$ in the dividing
process associated with the parent $p$ of $i$. Suppose $i$ has children
$c_{1}$ and $c_{2}$ such that
\begin{equation}
D_{i}=\begin{pmatrix}
D_{c_{1}} & U_{c_{1}}B_{c_{1}}U_{c_{2}}^{T}\\
U_{c_{2}}B_{c_{1}}^TU_{c_{1}}^{T} & D_{c_{2}}\end{pmatrix}
,\quad U_{i}=\begin{pmatrix}
U_{c_{1}}R_{c_{1}}\\
U_{c_{2}}R_{c_{2}}\end{pmatrix}
. \label{eq:du}\end{equation}
Then
\[
\hat{D}_{i}=D_{i}-U_{i}B_{i}B_{i}^{T}U_{i}^{T}=\begin{pmatrix}
\tilde{D}_{c_{1}} & U_{c_{1}}\tilde{B}_{c_{1}}U_{c_{2}}^{T}\\
U_{c_{2}}\tilde{B}_{c_{1}}^{T}U_{c_{1}}^{T} & \tilde{D}_{c_{2}}\end{pmatrix}
,
\]
where
\begin{gather}
\tilde{D}_{c_{1}}=D_{c_{1}}-U_{c_{1}}R_{c_{1}}B_{i}B_{i}^{T}R_{c_{1}}^{T}U_{c_{1}}^{T},\quad\tilde{D}_{c_{2}}=D_{c_{2}}-U_{c_{2}}R_{c_{2}}B_{i}B_{i}^{T}R_{c_{2}}^{T}U_{c_{2}}^{T},\nonumber\\
\tilde{B}_{c_{1}}=B_{c_{1}}-R_{c_{1}}B_{i}B_{i}^{T}R_{c_{2}}^{T}.
\label{eq:bc1}\end{gather}

In HSS\ constructions \cite{fasthss}, to ensure stability of HSS\ algorithms,
the $U$ basis generators are often made to have orthonormal columns
\cite{hssstability,toepls}. Accordingly, the $R$ generators satisfy that $\begin{pmatrix}
R_{c_{1}}\\
R_{c_{2}}\end{pmatrix}
$ also has orthonormal columns. Then each $B$ generator has $2$-norm equal to
its associated off-diagonal block. For example $\Vert B_{i}\Vert_{2}=\Vert
U_{i}B_{i}U_{j}^{T}\Vert_{2}$. Furthermore, $\Vert R_{c_{1}}\Vert_{2}\leq1$,
$\Vert R_{c_{2}}\Vert_{2}\leq1$, and (\ref{eq:bc1}) means\begin{equation}
\Vert\tilde{B}_{c_{1}}\Vert_{2}\leq\Vert B_{c_{1}}\Vert_{2}+\Vert B_{i}\Vert_{2}^{2}. \label{eq:normbc1}\end{equation}
If the off-diagonal block $U_{i}B_{i}U_{j}^{T}$ has a large norm, $\Vert
\tilde{B}_{c_{1}}\Vert_{2}$ can potentially be much larger than $\Vert
B_{c_{1}}\Vert_{2}$. We can similarly observe the norm growth with the updated
$D$ generators. This causes norm accumulations of lower-level diagonal and
off-diagonal blocks. Moreover, when the dividing process proceeds on
$\tilde{D}_{c_{1}}$, the norms of the updated $B,D$ generators at lower levels
can grow exponentially.

\begin{proposition}
\label{prop:b0}Suppose the $U_{k}$ generator of $A$ associated with each node
$k$ of $\mathcal{T}$ with $k\neq$ $\operatorname{root}(\mathcal{T})$ has
orthonormal columns and all the original $B_{k}$ generators satisfy $\Vert
B_{k}\Vert_{2}\leq\beta$ with $\beta\gg1$. Also suppose the leaves of
$\mathcal{T}$ are at level $l_{\max}\leq\log_{2}n$. When the original dividing
process in Section \ref{sub:div} proceeds from $\operatorname{root}(\mathcal{T})$ to a nonleaf node $i$, immediately after finishing the dividing
process associated with node $i$,

\begin{itemize}
\item with $i$ at level $l\leq l_{\max}-2$, the updated $B_{k}$ generator
(denoted $\tilde{B}_{k}$) associated with any descendant $k$ of $i$ satisfies\begin{equation}
\Vert\tilde{B}_{k}\Vert_{2}=O(\beta^{2^{l}})=O(\beta^{n/4}); \label{eq:normb1}\end{equation}

\item with $i$ at level $l\leq l_{\max}-1$, the updated $D_{k}$ generator
(denoted $\tilde{D}_{k}$) associated with any leaf descendant $k$ of $i$
satisfies\begin{equation}
\quad\Vert\tilde{D}_{k}\Vert_{2}=\Vert D_{k}\Vert_{2}+O(\beta^{2^{l}})=\Vert
D_{k}\Vert_{2}+O(\beta^{n/2}). \label{eq:normd1}\end{equation}

\end{itemize}
\end{proposition}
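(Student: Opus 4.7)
My plan is to induct on the level $l$ of the divided node, tracking how each successive divide inflates the $B$-generator norms roughly by a squaring step. The two main inputs are the generator update formula of Lemma \ref{lem:genupd} and the orthonormality assumption, which guarantees that every $R$-product appearing in (\ref{eq:genupd}) along any root-to-node path has $2$-norm at most $1$. To set up, when we divide at a node $p$ at level $l'$ with children $i,j$ via (\ref{eq:divide})--(\ref{eq:diagupd}), Lemma \ref{lem:genupd} applies with $H = B_i B_i^T$ to the $\hat D_i$ side and $H = I$ to the $\hat D_j$ side, so inserting these into (\ref{eq:genupd}) and using $\|R\|_2 \le 1$ gives, for each $k$ in the relevant subtree, the one-step estimates
\[
\|\tilde{B}_k\|_2 \le \|B_k\|_2 + \|B_i\|_2^2,\qquad \|\tilde{D}_k\|_2 \le \|D_k\|_2 + \|B_i\|_2^2,
\]
where $B_i$ is the generator at level $l'+1$ currently used in the divide; (\ref{eq:normbc1}) is the prototype.

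Next I would close a recursion for the per-level $B$-norm bound. Let $\gamma_{l'}$ be an upper bound on the $2$-norm of any $B$-generator at level $l'+1$ at the moment the divide at level $l'$ is executed. This generator has already absorbed the updates from divides at ancestor levels $0,1,\ldots,l'-1$, and each such update at level $s$ adds at most $\gamma_s^2$ by the one-step estimate, so
\[
\gamma_{l'} \le \beta + \sum_{s=0}^{l'-1} \gamma_s^2,\qquad \gamma_0 \le \beta.
\]
A straightforward induction, using that the sum is dominated by its last term (so $\gamma_{l'} \lesssim \gamma_{l'-1}^2$ and each step roughly squares the exponent), yields $\gamma_{l'} = O(\beta^{2^{l'}})$ for $\beta \gg 1$.

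With the recurrence in hand, I would assemble the claimed bounds. Immediately after the divide at $i$ at level $l$, any $B_k$ with $k$ a descendant of $i$ has absorbed exactly the contributions of the divides at levels $0,1,\ldots,l$ on the path from the root to $i$; divides at off-path nodes sit in disjoint subtrees and cannot touch $B_k$. Combining the one-step estimate with the recurrence gives
\[
\|\tilde{B}_k\|_2 \le \beta + \sum_{s=0}^{l} \gamma_s^2 = O(\beta^{2^l}),
\]
which together with $l \le l_{\max}-2 \le \log_2 n - 2$ yields (\ref{eq:normb1}). The identical accounting on the $\tilde{D}$-branch of (\ref{eq:genupd}), starting from $\|D_k\|_2$ instead of from $\beta$, produces (\ref{eq:normd1}) with $l \le l_{\max}-1$ and $2^l \le n/2$.

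The main obstacle is the bookkeeping in the recurrence step: one has to be careful that a divide at level $s$ actually modifies the specific $B_k$ under consideration (only $B$-generators strictly inside the subtree of a child of the divided node are touched), and that at the moment of the divide at level $l'$ the generator $B_i$ used in the update has itself already absorbed the earlier contributions $\gamma_0^2,\ldots,\gamma_{l'-1}^2$. Once this indexing is clean, the squaring recurrence together with the orthonormal-column bound $\|R\|_2 \le 1$ delivers the exponential norm growth essentially for free.
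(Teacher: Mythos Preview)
Your proposal is correct and follows essentially the same approach as the paper: both arguments use Lemma~\ref{lem:genupd} together with the orthonormality bound $\|R\|_2\le 1$ to obtain the one-step estimate $\|\tilde B_k\|_2 \le \|B_k\|_2 + \|B_i\|_2^2$, and then propagate this through the levels to exhibit the squaring of the exponent. The paper carries this out by explicit computation at levels $0,1,2$ and then invokes the general pattern, whereas you package the same idea into the recurrence $\gamma_{l'}\le \beta + \sum_{s<l'}\gamma_s^2$ and close it by induction; the content is the same.
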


\begin{proof}
Following the update formulas in Lemma \ref{lem:genupd}, we just need to show
the norm bound for $\Vert\tilde{B}_{k}\Vert_{2}$. The bound for $\Vert
\tilde{D}_{k}\Vert_{2}$ can be shown similarly.

After the dividing process associated with $\operatorname{root}(\mathcal{T})$
is finished, according to (\ref{eq:genupd}), $\tilde{B}_{k}$ associated with
any descendant $k$ of a child $i$ of $\operatorname{root}(\mathcal{T})$ looks
like
\begin{equation}
\tilde{B}_{k}=B_{k}-(R_{k}R_{k_{m-1}}\cdots R_{k_{1}})H_{i}(R_{k_{1}}^{T}\cdots R_{k_{m-1}}^{T}R_{\tilde{k}}^{T}), \label{eq:bk1}\end{equation}
where $H_{i}=B_{i}B_{i}^{T}$ if $i$ is the left child of $\operatorname{root}(\mathcal{T})$ or $H_{i}=I$ otherwise, $k$ is supposed to be at level $m$ with
sibling $\tilde{k}$, and $k\rightarrow k_{m-1}\rightarrow\cdots\rightarrow
k_{1}\rightarrow i$ is the path connecting $k$ to $i$ in the HSS tree
$\mathcal{T}$. Clearly, $\Vert H_{i}\Vert_{2}\leq\beta^{2}$. With the
orthogonality condition of the $U$ basis generators, $\begin{pmatrix}
R_{c_{1}}\\
R_{c_{2}}\end{pmatrix}
$ also has orthogonal columns. Then we get\begin{equation}
\Vert\tilde{B}_{k}\Vert_{2}\leq\Vert B_{k}\Vert_{2}+\Vert H_{i}\Vert_{2}\leq\beta+\beta^{2}=O(\beta^{2}). \label{eq:bkn1}\end{equation}

Then in the dividing process associated with node $i$ at level $1$, for a
child $c$ of $i$ (see Figure \ref{fig:treediv} for an illustration), the
generator $\tilde{D}_{c}$ is further updated to
\begin{equation}
\hat{D}_{c}=\tilde{D}_{c}-U_{c}H_{c}U_{c}^{T}, \label{eq:dc1}\end{equation}
where $H_{c}=\tilde{B}_{c}\tilde{B}_{c}^{T}$ if $c$ is the left child of $i$
or $H_{c}=I$ otherwise. We have $\Vert H_{c}\Vert_{2}\leq\Vert\tilde{B}_{c}\Vert_{2}^{2}$ for the first case and $\Vert H_{c}\Vert_{2}=1$ for the
second case. From (\ref{eq:bkn1}), we have $\Vert H_{c}\Vert_{2}\leq(\beta
^{2}+\beta)^{2}$. For any descendant $k$ of $c$ with sibling $\tilde{k}$,
(\ref{eq:dc1}) needs to update the generator $B_{k}$ to
\begin{align}
\tilde{B}_{k}=\ B_{k}  &  -(R_{k}R_{k_{m-1}}\cdots R_{k_{2}}R_{c})H_{i}(R_{c}^{T}R_{k_{2}}^{T}\cdots R_{k_{m-1}}^{T}R_{\tilde{k}}^{T}) \label{eq:bk2}\\
&  -(R_{k}R_{k_{m-1}}\cdots R_{k_{2}})H_{c}(R_{k_{2}}^{T}\cdots R_{k_{m-1}}^{T})R_{\tilde{k}}^{T},\nonumber
\end{align}
where the last term on the right-hand side is due to the update associated
with the dividing of $D_{i}$ like in (\ref{eq:bk1}). Then\begin{equation}
\Vert\tilde{B}_{k}\Vert_{2}\leq\Vert B_{k}\Vert_{2}+\Vert H_{i}\Vert_{2}+\Vert
H_{c}\Vert_{2}\leq\beta+\beta^{2}+(\beta^{2}+\beta)^{2}=O(\beta^{4}).
\label{eq:bkn2}\end{equation}
\begin{figure}[ptbh]
\centering\includegraphics[height=0.8in]{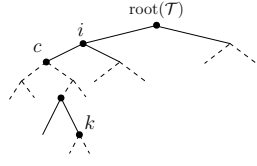}\caption{Nodes involved
in the dividing process.}\label{fig:treediv}\end{figure}

If the dividing process continues to $c$, it is similar to obtain $\Vert
\tilde{B}_{k}\Vert_{2}=O(\beta^{8})$ for any descendant $k$ of a child of $c$.
We can then similarly reach the conclusion on the general pattern of the norm
growth as in (\ref{eq:normb1}). Also, if $i$ is at level $l_{\max}-1$, then
$B_{k}$ associated with a child $k$ of $i$ is not updated, which is why only
$i$ at level $l\leq l_{\max}-2$ contributes to the norm growth of lower level
$B$ generators. This gives $2^{l}\le n/4$.
\end{proof}

This proposition indicates that, during the original hierarchical dividing
process, the updated $B,D$ generators associated with a lower-level node may
potentially have exponential norm accumulation, as long as one of its
ancestors is associated with a $B$ generator with a large norm. This can cause
stability issues or even overflow, as can be seen in the numerical tests later.

To resolve this, we introduce \emph{balancing}/scaling into the updates and
propose a new dividing strategy. That is, we replace the original dividing
method (\ref{eq:divide}) by\begin{align}
D_{p}  &  =\begin{pmatrix}
D_{i}-\frac{1}{\Vert B_{i}\Vert_{2}}U_{i}B_{i}B_{i}^{T}U_{i}^{T} & \\
& D_{j}-\Vert B_{i}\Vert_{2}U_{j}U_{j}^{T}\end{pmatrix}
\label{eq:div1}\\
&  +\begin{pmatrix}
\frac{1}{\sqrt{\Vert B_{i}\Vert_{2}}}U_{i}B_{i}\\
\sqrt{\Vert B_{i}\Vert_{2}}U_{j}\end{pmatrix}
\left(
\begin{array}
[c]{cc}\frac{1}{\sqrt{\Vert B_{i}\Vert_{2}}}B_{i}^{T}U_{i}^{T} & \sqrt{\Vert
B_{i}\Vert_{2}}U_{j}^{T}\end{array}
\right)  .\nonumber
\end{align}
Then we still have (\ref{eq:div}), but with\begin{equation}
\hat{D}_{i}=D_{i}-\frac{1}{\Vert B_{i}\Vert_{2}}U_{i}B_{i}B_{i}^{T}U_{i}^{T},\quad\hat{D}_{j}=D_{j}-\Vert B_{i}\Vert_{2}U_{j}U_{j}^{T},\quad Z_{p}=\begin{pmatrix}
\frac{1}{\sqrt{\Vert B_{i}\Vert_{2}}}U_{i}B_{i}\\
\sqrt{\Vert B_{i}\Vert_{2}}U_{j}\end{pmatrix}
. \label{eq:divnew1}\end{equation}
(For now, $\hat{D}_{i}$ and $\hat{D}_{j}$ still involve the $B_{i}$ terms in
different ways. Slightly later, we will provide a guideline on where to place
these $B_{i}$ terms.)

With this strategy, we can prove that the norms of the updated $B,D$
generators are well controlled.

\begin{proposition}
\label{prop:b}Suppose the same conditions as in Proposition \ref{prop:b0}
hold, except that (\ref{eq:divide}) is replaced by (\ref{eq:div1}) so that
(\ref{eq:diagupd}) is replaced by (\ref{eq:divnew1}). Then (\ref{eq:normb1})
becomes
\begin{equation}
\Vert\tilde{B}_{k}\Vert_{2}\leq2^{l}\beta\leq\frac{n}{4}\beta,
\label{eq:normb2}\end{equation}
and (\ref{eq:normd1}) becomes
\[
\Vert\tilde{D}_{k}\Vert_{2}\leq\Vert D_{k}\Vert_{2}+2^{l}\beta\leq\Vert
D_{k}\Vert_{2}+\frac{n}{2}\beta.
\]

\end{proposition}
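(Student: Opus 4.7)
The plan is to mirror the level-by-level induction in the proof of Proposition \ref{prop:b0}, with the decisive modification that the balanced updates in (\ref{eq:divnew1}) now produce an $H$ matrix of \emph{linear} rather than quadratic norm in $\|B_{i}\|_{2}$. Concretely, the left-child update uses $H = \tfrac{1}{\|B_{i}\|_{2}} B_{i} B_{i}^{T}$ with $\|H\|_{2} = \|B_{i}\|_{2}$, while the right-child update uses $H = \|B_{i}\|_{2} I$, again with $\|H\|_{2} = \|B_{i}\|_{2}$. Thus every occurrence of $\|B_{i}\|_{2}^{2}$ in the accumulation estimates (\ref{eq:bkn1}) and (\ref{eq:bkn2}) of Proposition \ref{prop:b0} is replaced by the much smaller quantity $\|B_{i}\|_{2}\le\beta$.

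With this substitution I would repeat the expansions (\ref{eq:bk1}) and (\ref{eq:bk2}) verbatim. The orthogonality of the column-stacked $R$ generators still yields $\|R_{k}R_{k_{l-1}}\cdots R_{k_{1}}\|_{2}\le 1$, so each ancestor's $H$-contribution to $\|\tilde{B}_{k}\|_{2}$ is additive and bounded by the corresponding $\|H\|_{2}$. Letting $b_{l}$ denote the maximum $\|\tilde{B}_{k}\|_{2}$ over all updated $B$ generators after the dividing process has reached level $l$, the base case at the root (the analog of (\ref{eq:bkn1}) with balancing) gives $b_{0}\le \beta+\beta = 2\beta$. For the inductive step, the key observation is that when we divide at a level-$l$ node, the $H$ for this divide is controlled by the \emph{already updated} $B$ generator at that node, whose norm is at most $b_{l-1}$ by the inductive hypothesis; by balancing, $\|H\|_{2}$ is therefore also at most $b_{l-1}$, not $b_{l-1}^{2}$. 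Summing the additive contributions gives a linear recurrence in $l$, which produces the claimed bound $\|\tilde{B}_{k}\|_{2}\le 2^{l}\beta$ (with a harmless $O(1)$ slack absorbed in the final estimate $2^{l}\le 2^{l_{\max}-2}\le n/4$).

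The bound on $\|\tilde{D}_{k}\|_{2}$ for a leaf descendant follows by the same induction. Using the leaf formula in the last line of (\ref{eq:genupd}), the perturbation added to $D_{k}$ has norm at most $\|U_{k}\|_{2}^{2}\,\|R\text{-products}\|_{2}^{2}\,\|H\|_{2}\le \|H\|_{2}$, and by the argument above this is again at most $2^{l}\beta$, giving $\|\tilde{D}_{k}\|_{2}\le \|D_{k}\|_{2}+2^{l}\beta$. The main bookkeeping challenge I anticipate is verifying that the balancing factor $\tfrac{1}{\|B_{i}\|_{2}}$ continues to cancel exactly one power of $\|B\|_{2}$ at every recursive dividing step, so that the previously exponential-in-$\beta$ blow-up $\beta^{2^{l}}$ of Proposition \ref{prop:b0} is systematically reduced to a linear-in-$\beta$ quantity that grows only geometrically in the level $l$.
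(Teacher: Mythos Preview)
Your proposal is correct and follows essentially the same approach as the paper: both argue that balancing replaces the quadratic contribution $\|H\|_2\le\|B_i\|_2^2$ by the linear one $\|H\|_2\le\|B_i\|_2$, then repeat the expansion (\ref{eq:bk1})--(\ref{eq:bk2}) using the orthonormality of the stacked $R$ generators to bound each additive $H$-term, and finally iterate level by level so that the bound doubles at each step. The paper carries out the first two levels explicitly ($2\beta$, then $4\beta$, then $8\beta$) and reads off the pattern, which is exactly the recurrence you describe.
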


\begin{proof}
The proof follows a procedure similar to the proof for Proposition
\ref{prop:b0}. Again, we just show the result for $\Vert\tilde{B}_{k}\Vert
_{2}$. After the dividing process associated with $\operatorname{root}(\mathcal{T})$ is finished, we still have (\ref{eq:bk1}) for any descendant
$k$ of a child $i$ of $\operatorname{root}(\mathcal{T})$, except that
$H_{i}=\frac{B_{i}B_{i}^{T}}{\Vert B_{i}\Vert_{2}}$ if $i$ is the left child
of $\operatorname{root}(\mathcal{T})$ or $H_{i}=\Vert B_{i}\Vert_{2}I$
otherwise. In either case, we have $\Vert H_{i}\Vert_{2}\leq\beta$. Then
(\ref{eq:bkn1}) becomes
\begin{equation}
\Vert\tilde{B}_{k}\Vert_{2}\leq2\beta. \label{eq:bkn3}\end{equation}

Then in the dividing process associated with node $i$ at level $1$, for a
child $c$ of $i$, the generator $\tilde{D}_{c}$ is further updated like in
(\ref{eq:dc1}), except that $H_{c}=\frac{\tilde{B}_{c}\tilde{B}_{c}^{T}}
{\Vert\tilde{B}_{c}\Vert_{2}}$ if $c$ is the left child of $i$ or $H_{c}=\Vert\tilde{B}_{c}\Vert_{2}I$ otherwise. We have $\Vert H_{c}\Vert_{2}
\leq\Vert\tilde{B}_{c}\Vert_{2}$ for both cases. From (\ref{eq:bkn3}), $\Vert
H_{c}\Vert_{2}\leq2\beta$. For any descendant $k$ of $c$, (\ref{eq:dc1}) still
requires the update of the generator $B_{k}$ to $\tilde{B}_{k}$ like in
(\ref{eq:bk2}), except that (\ref{eq:bkn2}) now becomes\[
\Vert\tilde{B}_{k}\Vert_{2}\leq\Vert B_{k}\Vert_{2}+\Vert H_{i}\Vert_{2}+\Vert
H_{c}\Vert_{2}\leq\beta+\beta+2\beta=4\beta.
\]

If the dividing process continues to $c$, it is similar to obtain $\Vert
\tilde{B}_{k}\Vert_{2}\leq8\beta$ for any descendant $k$ of the left child of
$c$. It is clear to observe the norm growth as in (\ref{eq:normb2}) in general.
\end{proof}

Therefore, the norm growth now becomes linear in $n$ and is well controlled,
in contrast with the exponential growth in Proposition \ref{prop:b0}.

Next, we can also optimize the rank of the low-rank update (the number of
columns in $Z_{p}$) in (\ref{eq:div}) and give a guideline to choose how
$\hat{D}_{i}$ and $\hat{D}_{j}$ should involve the $B_{i}$ generator. Note
that in the original dividing method (\ref{eq:divide}) in \cite{hsseig}, the
updates to the two diagonal blocks involve the $B_{i}$ generator in different
ways. No reason is given in \cite{hsseig} to tell why $\hat{D}_{i}$ and
$\hat{D}_{j}$ should involve $B_{i}$ differently.

In fact, in (\ref{eq:divide}) and also (\ref{eq:div1})--(\ref{eq:divnew1}),
the rank of the low-rank update is equal to $\operatorname{colsize}(B_{i})$.
In practice, $B_{i}$ may not be a square matrix. Thus, (\ref{eq:divnew1}) can
be used if $\operatorname{colsize}(B_{i})\leq\operatorname{rowsize}(B_{i})$.
Otherwise, we replace (\ref{eq:divnew1}) by the following:
\begin{equation}
\hat{D}_{i}=D_{i}-\Vert B_{i}\Vert_{2}U_{i}U_{i}^{T},\quad\hat{D}_{j}=D_{j}-\frac{1}{\Vert B_{i}\Vert_{2}}U_{j}B_{i}^{T}B_{i}U_{j}^{T},\quad Z_{p}=\begin{pmatrix}
\sqrt{\Vert B_{i}\Vert_{2}}U_{i}\\
\frac{1}{\sqrt{\Vert B_{i}\Vert_{2}}}U_{j}B_{i}^{T}\end{pmatrix}
, \label{eq:divnew2}\end{equation}
so that (\ref{eq:div}) still holds. In (\ref{eq:divnew2}), the low-rank update
size is now $\operatorname{rowsize}(B_{i})$. With such an optimization
strategy, the rank of the low-rank update is always the smaller of the row and
column sizes of $B_{i}$.

With these new ideas, we arrive at a more stable and efficient dividing stage.
One thing to point out is that the dividing stage needs a step to form $Z_{p}$
like in (\ref{eq:divnew1}) and (\ref{eq:divnew2}). Such a step is not
mentioned in \cite{hsseig}.

\section{Stable structured conquering stage\label{sec:conquer}}

We then discuss the solution of the eigenvalues and eigenvectors in the
conquering stage via the integration of various stability strategies and
FMM\ accelerations. As reviewed in Section \ref{conquerstg}, the key problem
in the conquering stage is to quickly find the eigendecomposition of the
rank-$1$ update problem (\ref{eq:rank1upd}). We show a triangular FMM idea for
accelerating secular equation solution, a local shifting idea for solving
shifted secular equations and constructing structured eigenvectors, the
overall eigendecomposition framework, and the precise eigenmatrix structure.

\subsection{Triangular FMM\ accelerations of secular equation
solution\label{sub:secularFMM}}

For (\ref{eq:rank1upd}), we consider the solution of the secular equation
(\ref{eq:seceq}) for its eigenvalues $\,\lambda_{k},k=1,2,\ldots,n$. Without
loss of generality, suppose the diagonal entries $d_{k}$ of $\tilde{\Lambda}$
are ordered from the smallest to the largest. Also, suppose $d_{k}$ and
$d_{k+1}$ are not too close and each $v_{k}$ is not too small so that
deflation is not needed. Otherwise, deflation in Remark \ref{rem:deflation}
below is applied first.

\subsubsection{Challenge to FMM accelerations of function
evaluations\label{subsub:assemble}}

When modified Newton's method is used to solve for $\lambda_{k}$ as in
practical divide-and-conquer methods, it needs to evaluate $f(x)$ and
$f^{\prime}(x)$ at certain $x_{k}\in(d_{k},d_{k+1})$. The idea in
\cite{cha04,gu95,hsseig} is to assemble the function evaluations for all $k$
together as matrix-vector products and then accelerate them by the FMM. That
is, let
\begin{gather}
\mathbf{f}=\begin{pmatrix}
f(x_{1}) & \cdots & f(x_{n})
\end{pmatrix}
^{T},\quad\mathbf{f}^{\prime}=\begin{pmatrix}
f^{\prime}(x_{1}) & \cdots & f^{\prime}(x_{n})
\end{pmatrix}
^{T},\nonumber\\
\mathbf{v}=\begin{pmatrix}
v_{1} & \cdots & v_{n}\end{pmatrix}
^{T},\quad\mathbf{w}=\mathbf{v}\odot\mathbf{v},\quad\mathbf{e}=\begin{pmatrix}
1 & \cdots & 1
\end{pmatrix}
^{T},\label{eq:w}\\
C=\left(  \frac{1}{d_{j}-x_{i}}\right)  _{n\times n},\quad S=\left(  \frac
{1}{(d_{j}-x_{i})^{2}}\right)  _{n\times n}. \label{eq:cs}\end{gather}
Then\begin{equation}
\mathbf{f}=\mathbf{e}+C\mathbf{w},\quad\mathbf{f}^{\prime}=S\mathbf{w}.
\label{eq:cw}\end{equation}

The vectors $\mathbf{f}$ and $\mathbf{f}^{\prime}$ can be quickly evaluated by
the FMM with the kernel functions $\kappa(s,t)=\frac{1}{s-t}$ and
$\kappa(s,t)=\frac{1}{(s-t)^{2}}$, respectively. A basic idea of the FMM\ for
computing, say, $C\mathbf{w}$ is as follows. Note that $C$ is the evaluation
of the kernel $\kappa(s,t)=\frac{1}{s-t}$ at real points $s\in\{d_{j}\}_{1\leq
j\leq n}$ and $t\in\{x_{i}\}_{1\leq i\leq n}$ that are interlaced:
\begin{equation}
d_{i}<x_{i}<d_{i+1}<x_{i+1},\quad1\leq i\leq n-1. \label{eq:interlace}\end{equation}
The sets $\{x_{i}\}_{1\leq i\leq n}$ and $\{d_{j}\}_{1\leq j\leq n}$ together
are treated as one set and then hierarchically partitioned. This is done by
hierarchically partitioning the interval where all $x_{i}$ and $d_{j}$ are
located. This also naturally leads to a hierarchical partition of both
$\{x_{i}\}_{1\leq i\leq n}$ and $\{d_{j}\}_{1\leq j\leq n}$. Consider two
subsets produced in this partitioning:
\begin{equation}
\mathbf{s}_{x}\subset\{x_{i}\}_{1\leq i\leq n},\quad\mathbf{s}_{d}\subset\{d_{j}\}_{1\leq j\leq n}. \label{eq:subsets}\end{equation}
Use $C_{\mathbf{s}_{x},\mathbf{s}_{d}}=(\kappa(d_{j},x_{i}))_{x_{i}\in\mathbf{s}_{x},d_{j}\in\mathbf{s}_{d}}$ to denote the block of $C$ defined
by $\mathbf{s}_{x}$ and $\mathbf{s}_{d}$, which is often referred to as the
\emph{interaction} between $\mathbf{s}_{x}$ and $\mathbf{s}_{d}$. If
$\mathbf{s}_{x}$ and $\mathbf{s}_{d}$ are well separated (a precise definition
of the separation can be found in \cite{gre87,sun2001}), $C_{\mathbf{s}_{x},\mathbf{s}_{d}}$ is approximated by a low-rank form as
\begin{equation}
C_{\mathbf{s}_{x},\mathbf{s}_{d}}\approx U_{\mathbf{s}_{x}}B_{\mathbf{s}_{x},\mathbf{s}_{d}}V_{\mathbf{s}_{d}}^{T}, \label{eq:cxd}\end{equation}
which can be obtained from a degenerate expansion of $\kappa(s,t)$. For any
desired accuracy, the rank in (\ref{eq:cxd}) is bounded. $\mathbf{s}_{x}$ and
$\mathbf{s}_{d}$ are also said to be \emph{far-field} clusters. If they are
not well separated or are \emph{near-field} clusters, then $C_{\mathbf{s}_{x},\mathbf{s}_{d}}$ is a dense block. The interactions between subsets at
different levels of the hierarchical partition are considered, so that the
$U,V$ basis matrices in (\ref{eq:cxd}) satisfy nested relationships (like in
(\ref{eq:du})). The details can be found in \cite{gre87} and\ are not our
focus here. (Also see \cite{fmm1d} particularly for a stable 1D matrix
version.) The FMM\ essentially produces an \emph{FMM\ matrix }approximation to
$C$ and multiplies it with $\mathbf{w}$. The complexity of each FMM
matrix-vector mutliplication\ is $O(n)$.

In classical practical implementations of secular equation solution methods,
it is preferred to write $f(x)$ as the following form so to avoid cancellation
(see, \cite{bun78}):\[
f(x)=1+\psi_{k}(x)+\phi_{k}(x),
\]
where the splitting depends on $k$ (when $\lambda_{k}\in(d_{k},d_{k+1})$ is to
be found):
\begin{equation}
\label{eq:psiphi1}\psi_{k}(x)=\sum_{j=1}^{k}\frac{v_{j}^{2}}{d_{j}-x},\quad\phi_{k}(x)=\sum_{j=k+1}^{n}\frac{v_{j}^{2}}{d_{j}-x}.
\end{equation}
Due to the interlacing property, all the terms in the sum for $\psi_{k}(x)$
(and $\phi_{k}(x)$) have the same sign. Furthermore, $\psi_{k}$ and $\phi_{k}$
capture the behaviors of $f$ near two poles $d_{k}$ and $d_{k+1}$
respectively. A\ reliable and widely used strategy to solve (\ref{eq:seceq})
is proposed in \cite{li93} based on a modified Newton's method with a hybrid
scheme for rational interpolations. The scheme mixes a middle way method and a
fixed weight method and is implemented in LAPACK \cite{lapack}. In the middle
way method, rational functions $\xi_{k,1}(x)=a_{1}+\frac{b_{1}}{d_{k}-x}$ and
$\xi_{k,2}(x)=a_{2}+\frac{b_{2}}{d_{k+1}-x}$ are decided to interpolate
$\psi_{k}$ and $\phi_{k}$ respectively at $x_{k}\in(d_{k},d_{k+1})$, so that\[
\xi_{k,1}(x_{k})=\psi_{k}(x_{k}),\quad\xi_{k,1}^{\prime}(x_{k})=\psi
_{k}^{\prime}(x_{k}),\quad\xi_{k,2}(x_{k})=\phi_{k}(x_{k}),\quad\xi
_{k,2}^{\prime}(x_{k})=\phi_{k}^{\prime}(x_{k}).
\]
We follow this strategy to find the first $n-1$ roots $\lambda_{1},\lambda
_{2},\ldots,\lambda_{n-1}$. The last root $\lambda_{n}$ has only one pole
$d_{n}$ next to it so a simple rational interpolation is used as in
\cite{lapack, li93}.

In the iterative solution process, it requires to evaluate the functions
$\psi_{k}(x)$, $\phi_{k}(x)$, $\psi_{k}^{\prime}(x)$, and $\phi_{k}^{\prime
}(x)$ at $x_{k}\in(d_{k},d_{k+1})$, $1\leq k\leq n-1$. (Note that even though
the summands in $\psi_{k}^{\prime}(x)$ and $\phi_{k}^{\prime}(x)$ have the
same sign, $\psi_{k}^{\prime}(x)$ and $\phi_{k}^{\prime}(x)$ are used
separately in the rational interpolations by $\xi_{k,1}(x)$ and $\xi_{k,2}(x)$, respectively \cite{li93}.) Since these functions all depend on
individual $k$, the usual FMM\ cannot be applied directly. A\ basic way to
understand this is, the usual FMM handles the evaluation of a kernel
$\kappa(s,t)$ at a fixed set of data points, while here these $k$-dependent
functions need to evaluate the kernel at subsets of the data points that vary
with individual points or $k$.

\subsubsection{Triangular FMM for accelerating the
solution\label{subsub:trifmm}}

To resolve the challenge of applying FMM accelerations to (\ref{eq:psiphi1}),
we let\begin{align}
\boldsymbol{\psi}  &  =\begin{pmatrix}
\psi_{1}(x_{1}) & \cdots & \psi_{n}(x_{n})
\end{pmatrix}
^{T},\quad\boldsymbol{\phi}=\begin{pmatrix}
\phi_{1}(x_{1}) & \cdots & \phi_{n-1}(x_{n-1}) & 0
\end{pmatrix}
^{T},\label{eq:psiphi}\\
\boldsymbol{\psi}^{\prime}  &  =\begin{pmatrix}
\psi_{1}^{\prime}(x_{1}) & \cdots & \psi_{n}^{\prime}(x_{n})
\end{pmatrix}
^{T},\quad\boldsymbol{\phi}^{\prime}=\begin{pmatrix}
\phi_{1}^{\prime}(x_{1}) & \cdots & \phi_{n-1}^{\prime}(x_{n-1}) & 0
\end{pmatrix}
^{T}. \label{eq:psiphi2}\end{align}
The key idea is to write\begin{equation}
\mathbf{f}=\mathbf{e}+\boldsymbol{\psi}+\boldsymbol{\phi}=\mathbf{e}+C_{L}\mathbf{w}+C_{U}\mathbf{w},\quad\mathbf{f}^{\prime}=\boldsymbol{\psi
}^{\prime}+\boldsymbol{\phi}^{\prime}=S_{L}\mathbf{w}+S_{U}\mathbf{w},
\label{eq:flu}\end{equation}
where $\mathbf{e}$ is given in (\ref{eq:w}), $C_{L}$ and $S_{L}$ are the lower
triangular parts of $C$ and $S$, respectively, and $C_{U}$ and $S_{U}$ are the
strictly upper triangular parts of $C$ and $S$, respectively. This suggests
that, to use the FMM, it should be applied to the lower and upper triangular
parts of $C$ and $S$ separately. That is, we need a special \emph{triangular
FMM} that can be used to quickly evaluate $C_{L}\mathbf{w}$, $C_{U}\mathbf{w}$, $S_{L}\mathbf{w}$, $S_{U}\mathbf{w}$.

Without going into too many details, we state some key points in our design of
the triangular FMM in terms of the evaluation of $C_{L}\mathbf{w}$ and
$C_{U}\mathbf{w}$.

\begin{enumerate}
\item During the hierarchical partitioning of (\ref{eq:interlace}) for
generating subsets like in (\ref{eq:subsets}), it is important to guarantee
$d_{k}$ and $x_{k}$ for each same $k$ are respectively assigned to two subsets
$\mathbf{s}_{x}$ and $\mathbf{s}_{d}$ that define a near-field interaction.
This is to make sure $\kappa(d_{k},x_{k})$ appears in a dense block of the FMM
matrix approximation to $C$. Hence, the blocks corresponding to far-field
interactions only consist of entries $\kappa(d_{j},x_{k}),k\neq j$.

\item The partitioning of (\ref{eq:interlace}) should be \emph{adaptive} since
some (intermediate) eigenvalues may cluster together. That is, the interval
where all $x_{i}$ and $d_{j}$ are located may not be uniformly partitioned.

\item The triangular FMM deals with \emph{directional interactions} between
the $x_{i}$ and $d_{j}$ points. For example, for the evaluation of
$C_{L}\mathbf{w}$ in (\ref{eq:flu}) with $\kappa(s,t)=\frac{1}{s-t}$, the
$i$th entry of $C_{L}\mathbf{w}$ is $\sum_{x_{i}>d_{j}}\kappa(d_{j},x_{i})w_{j}$, which corresponds to the interactions between $x_{i}$ and all
$d_{j}$'s on the left of $x_{i}$. For two subsets $\mathbf{s}_{x}$ and
$\mathbf{s}_{d}$ like in (\ref{eq:subsets}), the subblock $(C_{L})_{\mathbf{s}_{x},\mathbf{s}_{d}}$ of $C_{L}$ corresponding to the interaction
between $\mathbf{s}_{d}$ and $\mathbf{s}_{x}$ has the following forms.

\begin{itemize}
\item If $\mathbf{s}_{x}$ and $\mathbf{s}_{d}$ are near-field clusters,
$(C_{L})_{\mathbf{s}_{x},\mathbf{s}_{d}}$ is the lower triangular part of the
dense diagonal block $C_{\mathbf{s}_{x},\mathbf{s}_{d}}$.

\item If $\mathbf{s}_{x}$ and $\mathbf{s}_{d}$ are well separated and
$\mathbf{s}_{x}$ is on the right of $\mathbf{s}_{d}$, $(C_{L})_{\mathbf{s}_{x},\mathbf{s}_{d}}$ is just $C_{\mathbf{s}_{x},\mathbf{s}_{d}}$, so an
approximation in (\ref{eq:cxd}) can be obtained as in the regular FMM.

\item If $\mathbf{s}_{x}$ and $\mathbf{s}_{d}$ are well separated and
$\mathbf{s}_{x}$ is on the left of $\mathbf{s}_{d}$, $(C_{L})_{\mathbf{s}_{x},\mathbf{s}_{d}}$ is a zero block. This can be accommodated by setting
$B_{\mathbf{s}_{x},\mathbf{s}_{d}}=0$ in (\ref{eq:cxd}). In the triangular
FMM, the zero block $(C_{L})_{\mathbf{s}_{x},\mathbf{s}_{d}}$ is skipped in
the matrix-vector multiplication.
\end{itemize}
\end{enumerate}

With the triangular FMM acceleration, it is quick to perform all the function
evaluations in each step of the iterative solution of the secular equation.
The cost in one iteration step for evaluating relevant functions at all
$x_{k}$ simultaneously is $O(n)$.

\subsubsection{Iterative secular equation solution}

During the iterative secular equation solution, let $x_{k}^{(j)}$ be an
approximation to the eigenvalue $\lambda_{k}$ at the iteration step $j$. A
correction $\Delta x_{k}^{(j)}$ is computed so as to update $x_{k}^{(j)}$ as\begin{equation}
x_{k}^{(j+1)}\leftarrow x_{k}^{(j)}+\Delta x_{k}^{(j)}. \label{eq:correction}\end{equation}
(We sometimes write $x_{k}$ instead of $x_{k}^{(j)}$ unless we specifically
discuss the details of the iterations.)

We adopt the stopping criterion from \cite{gu95}:
\begin{equation}
|f(x_{k}^{(j)})|<cn(1+|\psi(x_{k}^{(j)})|+|\phi(x_{k}^{(j)})|)\epsilon
_{\operatorname{mach}}, \label{eq:stopcrit}\end{equation}
where $c$ is a small constant. This stopping criterion can be conveniently
checked after the FMM-accelerated function evaluations, which is an advantage
over a criterion in \cite{li93}. The factor $n$ in (\ref{eq:stopcrit}) might
be loose for extremely large matrices. It is due to the amplification factor
in error propagations of general matrix multiplications. However, the FMM is a
tree-based algorithm where errors propagate along the tree and are amplified
by $O(\log n)$ times instead \cite{hssstability}. Thus for large $n$, $n$ in
(\ref{eq:stopcrit}) may be replaced by $O(\log n)$.

Typically, a very small number of iterations is needed for convergence, just
like the tridiagonal divide-and-conquer algorithm as mentioned in
\cite{dem97}. (In \cite{dem97}, it is pointed out that the LAPACK
divide-and-conquer\ routine reaches full machine precision for each eigenvalue
with only 2 or 3 iterations on average and never more than 7 iterations in
practice.)
With the total number of iterations bounded, the total iterative solution cost
for finding all the eigenvalues (from one secular equation)\ is then $O(n)$.

\begin{remark}
\label{rem:deflation}When $v_{k}$ or the difference $|d_{k}-d_{k+1}|$ is
small, deflation is applied. In practical implementations of the classical
divide-and-conquer eigensolver (see, e.g., \cite{lapack}), the deflation is
performed in a two-step procedure with a tolerance related to $\epsilon
_{\operatorname{mach}}$. Here, we follow a similar procedure, but accept a
user-supplied deflation tolerance $\tau$ to get a more flexible deflation procedure.

\begin{itemize}
\item For $1\leq k \leq n$, $\lambda_{k}$ is deflated if $\vert v_{k} \vert<
\tau$. Without loss of generality, assume $\lambda_{p+1},\ldots,\lambda_{n}$
are deflated, and the remaining eigenvalues are $\lambda_{1},\ldots
,\lambda_{p}$.

\item For $1\leq k \leq p-1$, a Givens rotation is used to deflate
$\lambda_{k}$ if
\[
\vert(d_{k}-d_{k+1}) v_{k} v_{k+1}\vert< (v_{k}^{2}+v_{k+1}^{2}) \tau.
\]

\end{itemize}

The parameter $\tau$ offers the flexibility to control the accuracy of the
eigenvalues. For situations when only modest accuracy is needed, a larger
$\tau$ can be used to save costs. This can sometimes also avoid the need to
deal with situations where $|\lambda_{k}-d_{k}|$ or $|\lambda_{k}-d_{k+1}|$ is
too small.
\end{remark}

\subsection{Local shifting FMM accelerations of shifted secular equation
solution}

When there are clustered eigenvalues or intermediate eigenvalues or when
updates to previous eigenvalues are small, then typically the original secular
equation (\ref{eq:seceq}) is not directly solved. Instead, shifted secular
equations are solved in practical implementations for the purpose of stability
and accuracy, as mentioned in \cite{bun78,dong87,gu95}. However, it is
nontrivial to use the FMM\ to accelerate shifted secular equation solution. In
fact, the paper \cite{gu95} mentions the possibility of FMM\ accelerations for
the original secular equation but does not consider the shifted ones. The
FMM-accelerated algorithm in \cite{hsseig} does not use shifted secular
equations either and thus has stability risks. In this subsection, we discuss
the need for shifts and the challenge to FMM\ accelerations, and moreover,
show how we overcome the challenge through a new strategy that makes it
practical to apply FMM\ accelerations to shifted secular equations. In the
following, we suppose deflation has already been applied.

\subsubsection{Shifted secular equation solution and challenge to FMM
accelerations\label{subsub:shift}}

During the solution for $\lambda_{k}\in(d_{k},d_{k+1})$, if $\lambda_{k}$ is
very close to $d_{k}$ or $d_{k+1}$, a shifted secular equation may be solved
to accurately get the small gap between $\lambda_{k}$ and $d_{k}$ or $d_{k+1}$, after changing the origin to $d_{k}$ or $d_{k+1}$ \cite{bun78,dong87,gu95}.
For example, if $f(\frac{d_{k}+d_{k+1}}{2})\geq0$, then $d_{k}<\lambda_{k}\leq\frac{d_{k}+d_{k+1}}{2}$ and $\lambda_{k}$ is closer to $d_{k}$. The
origin is shifted to $d_{k}$.
Without loss of generality we always assume $\lambda_{k}$ is closer to $d_{k}$
and the shift is $d_{k}$. The original secular equation (\ref{eq:seceq}) can
be written in the following equivalent \emph{shifted secular equation}:
\begin{equation}
g_{k}(y)\equiv f(d_{k}+y)=1+\sum_{j=1}^{n}\frac{v_{j}^{2}}{\delta_{jk}-y}=0,
\label{eq:shiftedeq}\end{equation}
where\begin{equation}
\delta_{jk}= d_{j}-d_{k},\quad j=1,2,\ldots,n. \label{eq:deltajk}\end{equation}
The gap $\eta_{k}\equiv\lambda_{k}-d_{k}$ can be computed accurately by
solving (\ref{eq:shiftedeq}) for $y=\eta_{k}$. We would like to provide some
details on the benefits of this within our context.

One benefit is to avoid catastrophic cancellation or division by zero. When a
high accuracy is desired and a small tolerance $\tau$ is used in the deflation
criterion (Remark \ref{rem:deflation}), shifting is necessary to avoid
catastrophic cancellation or division by zero, similar to the case in standard
divide-and-conquer methods \cite{bun78, gu95}. As discussed in \cite{bun78},
it is preferred to compute $\delta_{ik}-\eta_{k}$ instead of directly from
$d_{i}-\lambda_{k}$, since the former does not suffer from cancellation. To be
more specific, we illustrate this with the following example. In exact
arithmetic, an approximation $x_{k}$ to $\lambda_{k}$ computed in the
iterative solution shall lie strictly between $d_{k}$ and $d_{k+1}$. At each
modified Newton iteration to solve for $\lambda_{k}$ in (\ref{eq:seceq}), it
needs to guarantee $d_{k}<\operatorname{fl}(x_{k})<d_{k+1}$. However, this
might not be satisfied in floating point arithmetic when $x_{k}$ is very close
to $d_{k}$ or
\begin{equation}
|d_{k}-x_{k}|=O(\epsilon_{\operatorname{mach}})\text{ or smaller},
\label{eq:closegap}\end{equation}
which may lead to cancellation when computing $d_{k}-\operatorname{fl}(x_{k})$:
\begin{equation}
\operatorname{fl}(d_{k}-\operatorname{fl}(x_{k}))=o(\epsilon
_{\operatorname{mach}})\quad\text{ or }\quad\operatorname{fl}(d_{k}-\operatorname{fl}(x_{k}))=0 \label{eq:cancellation}\end{equation}
This will induce stability dangers in the numerical solutions of the original
secular function: $\operatorname{fl}\left(  \frac{v_{k}^{2}}{d_{k}-\operatorname{fl}(x_{k})}\right)  $ is either highly inaccurate or becomes
$\infty$.


Note that (\ref{eq:closegap}) and (\ref{eq:cancellation}) are possible even if
deflation has been applied with a tolerance $\tau$ in Remark
\ref{rem:deflation} that is not too small. To see this, suppose $v_{k}=O(\tau)\geq\tau$ and the exact root $\lambda_{k}$ satisfies $|\lambda
_{k}-d_{j}|\gg v_{j}^{2}$ for $j\neq k$. Substituting $\lambda_{k}$ into the
secular equation (\ref{eq:seceq}) to get $\frac{v_{k}^{2}}{d_{k}-\lambda_{k}}=-1+\sum_{j\neq k}^{n}\frac{v_{j}^{2}}{\lambda_{k}-d_{j}}=O(1)$. In this
case, $\lambda_{k}$ shall be very close to $d_{k}$ in the following sense:
\[
|d_{k}-\lambda_{k}|=v_{k}^{2}\cdot O(1)=O(\tau^{2}).
\]
If $\tau=O(\epsilon_{\operatorname{mach}}^{1/2})$ which is not extremely
small, we can have (\ref{eq:closegap}) so that (\ref{eq:cancellation}) may
happen in the modified Newton's method.

Another benefit for solving the shifted equation is the convergence. It is
observed in our tests that dealing with $\eta_{k}$ instead of $\lambda_{k}$
can speed up the convergence of root finding. If $\lambda_{k}$ is solved
directly from (\ref{eq:seceq}), then the approximation $x_{k}^{(j)}$ at
iteration step $j$ is updated as in (\ref{eq:correction}). Suppose
$|\lambda_{k}|=O(1)$ and $|\eta_{k}|=|\lambda_{k}-d_{k}|=O(\epsilon
_{\operatorname{mach}})$. Since $x_{k}^{(j)}$ converges to $\lambda_{k}$ as
$j$ increases, we also have $|x_{k}^{(j)}|=O(1)$ and $|x_{k}^{(j)}-d_{k}|=O(\epsilon_{\operatorname{mach}})$ after some iterations. By modified
Newton's method, the correction $\Delta x_{k}^{(j)}$ approaches $0$ as $j$
increases, which may lead to loss of digits in $x_{k}^{(j+1)}$:
$\operatorname{fl}(x_{k}^{(j+1)})=\operatorname{fl}(x_{k}^{(j)}+\Delta
x_{k}^{(j)})=\operatorname{fl}(x_{k}^{(j)})$. As a result, the iteration
stagnates. On the other hand, if $\eta_{k}$ is solved from the shifted secular
equation, as in \cite{lapack, bun78, dong87}, the update (\ref{eq:correction})
is replaced by
\begin{equation}
y_{k}^{(j+1)}\leftarrow y_{k}^{(j)}+\Delta x_{k}^{(j)}, \label{eq:newupdate}\end{equation}
where $y_{k}^{(j)}=x_{k}^{(j)}-d_{k}$ is an approximation to $\eta_{k}$ at
step $j$ of the iterative solution. Although (\ref{eq:correction}) and
(\ref{eq:newupdate}) are equivalent in exact arithmetic, the latter preserves
a lot more digits of accuracy since $|y_{k}^{(j)}|=O(\epsilon
_{\operatorname{mach}})$.

These discussions illustrate the importance of solving the shifted secular
equation (\ref{eq:shiftedeq}) instead of the original equation (\ref{eq:seceq}). However, in an FMM-accelerated scheme where all $\lambda_{k},k=1,2,\ldots
,n$ are solved simultaneously, it is not convenient to apply the technique of
shifting. This is because the shifts depend on individual eigenvalues and
there is no such a uniform shift that would work for all $\lambda_{k}$'s.

As an example, consider the FMM\ acceleration of the solution of the shifted
equation (\ref{eq:shiftedeq}). Let $y_{k}=x_{k}-d_{k}$ be an approximation to
$\eta_{k}$ during the iterative solution. The evaluations of $g_{k}(y)$ in
(\ref{eq:shiftedeq}) at $y=y_{k}$ for all $k=1,2,\ldots,n$ can be assembled
into the matrix form\begin{gather}
\mathbf{g}=\mathbf{e}+\hat{C}\mathbf{w},\quad\text{with}\label{eq:g}\\
\mathbf{g}=\begin{pmatrix}
g_{1}(y_{1}) & \cdots & g_{n}(y_{n})
\end{pmatrix}
^{T},\quad\hat{C}=\left(  \frac{1}{\delta_{jk}-y_{k}}\right)  _{1\leq k,j\leq
n},\nonumber
\end{gather}
where $\delta_{jk}$ is given in (\ref{eq:deltajk}).

Recall that when the FMM is used to accelerate the matrix-vector product
$C\mathbf{w}$ in (\ref{eq:cw}), it\ relies on the separability of $s$ and $t$
in a degenerate approximation of $\kappa(s,t)=\frac{1}{s-t}$. (Note that in
$\kappa(d_{j},x_{k})$, $x_{k}$ only involves the row index $k$ and $d_{j}$
only involves the column index $j$, so that the separability can be understood
in terms of the row and column indices.) However, to evaluate $\hat
{C}\mathbf{w}$ in (\ref{eq:g}), we have
\begin{equation}
\kappa(d_{j},x_{k})=\kappa(d_{j}-d_{k},x_{k}-d_{k})=\kappa(\delta_{jk},y_{k}).
\label{eq:localshift}\end{equation}
$\delta_{jk}$ involves both the row and column indices, so that the
separability in terms of the row and column indices does not hold. Also, there
is no obvious way of rewriting $\kappa(\delta_{jk},\eta_{k})$ to produce
separability in $j$ and $k$. These make it difficult to apply the
FMM\ acceleration to the solution of the shifted secular equation. (If there
exist such a uniform shift $d_{0}$, then $\kappa(d_{j},x_{k})=\kappa
(d_{j}-d_{0},x_{k}-d_{0})$ and the FMM framework would still apply. However,
the shift $d_{k}$ as above for $\lambda_{k}$ depends on the local behavior of
the secular function in $(d_{k},d_{k+1})$ so such $d_{0}$ does not exist.)

One possible compromise is as follows (as mentioned in our earlier
presentation \cite{cse21}). The FMM-accelerated iterations are applied to
solve the original secular equation (\ref{eq:seceq}) via $K$. In the meantime,
whenever the difference $|x_{k}-d_{k}|$ is too small for a certain eigenvalue
$\lambda_{k}$, switch to solve the shifted equation (\ref{eq:shiftedeq})
without FMM\ accelerations to get $\lambda_{k}$. However, if
(\ref{eq:closegap}) happens very often when a small tolerance $\tau$ is used
for high accuracy or when the problem is not very nice, then the efficiency
will be reduced significantly since every such a case costs extra $O(n)$
flops. Also, when a shift like this is involved, the corresponding eigenvector
needs to be represented in the usual way for the accuracy purpose (instead of
using the structured form as in Section \ref{sub:ev} later). This requires
storages for extra (regular) eigenvectors. Thus, this compromise is not fully satisfactory.

\subsubsection{FMM accelerations with local shifting\label{sub:localshift}}

To resolve the challenge brought by the shifted secular equation, we propose a
somewhat subtle strategy called \emph{local shifting} that makes it feasible
to apply FMM accelerations to solve (\ref{eq:shiftedeq}).

As mentioned in Section \ref{subsub:assemble}, multiple terms involving
$x_{i}-d_{j}$ are assembled into matrices so as to apply FMM\ accelerations.
See, e.g., (\ref{eq:cs}). When $|x_{i}-d_{j}|$ is small, the shifted equation
helps get $x_{i}-d_{j}$ accurately. However, when $i$ is not near $j$ or when
$|i-j|$ is large, $x_{i}-d_{j}$ can actually be computed accurately
\emph{without involving any shift} $d_{k}$ used for computing any eigenvalue
$\lambda_{k}$. To see this, recall that $d_{i}<x_{i}<d_{i+1}$ and also after
deflation with the criterion in Remark \ref{rem:deflation}, we have for all
$i$,
\[
|d_{i}-d_{i+1}|\geq\frac{{v_{i}}^{2}+v_{i+1}^{2}}{v_{i}v_{i+1}}\ge2\tau.
\]
Thus, for $j\neq i,i+1$,
\begin{equation}
|x_{i}-d_{j}|\geq\min(|d_{i}-d_{j}|,|d_{i+1}-d_{j}|)\geq2(|i-j|-1)\tau.
\label{eq:farfield}\end{equation}
Hence, $x_{i}-d_{j}$ can be computed accurately when $|i-j|$ is large.

Following this justification, we have our local shifting strategy with the
following basic ideas.

\begin{enumerate}
\item Use the gap $\eta_{k}$ for each eigenvalue $\lambda_{k}$ locally (in
near-field interactions), which does not interfere with the structures needed
for FMM\ accelerations.

\item It is safe to directly use $\lambda_{k}$ recovered from
\begin{equation}
\lambda_{k}=d_{k}+\eta_{k},\quad k=1,2,\ldots,n, \label{eq:evalnew}\end{equation}
in far-field interactions so as to exploit the rank structure and facilitate FMM\ accelerations.
\end{enumerate}

The major components are as follows.

\begin{itemize}
\item For $k=1,2,\ldots,n$, the shifted secular equations (\ref{eq:shiftedeq})
are solved together for the gaps $\eta_{k}=\lambda_{k}-d_{k}$. An intermediate
gap during the iterative solution looks like $y_{k}=x_{k}-d_{k}$. The relevant
function evaluations in the iterative solutions are assembled into
matrix-vector products like in (\ref{eq:g}).

\item The FMM\ is used to accelerate the resulting matrix-vector products like
$\hat{C}\mathbf{w}$ in (\ref{eq:g}). Suppose two subsets $\mathbf{s}_{x}$ and
$\mathbf{s}_{d}$ like in (\ref{eq:subsets}) are well-separated. As mentioned
above, for $x_{k}\in\mathbf{s}_{x}$ and $d_{j}\in\mathbf{s}_{d}$, $x_{k}$ and
$d_{j}$ are far away from each other and $|k-j|$ is large, so $x_{k}-d_{j}$
can then be computed accurately because of (\ref{eq:farfield}). Thus, we can
recover $x_{k}$ from $d_{k}+y_{k}$ so as to directly exploit the low-rank
structure like in (\ref{eq:cxd}). This is because, say, the far-field
interaction $(\kappa(d_{j},x_{k}))_{x_{k}\in\mathbf{s}_{x},d_{j}\in
\mathbf{s}_{d}}$ (with $\kappa(s,t)=\frac{1}{s-t}$) of $\hat{C}$ is now just a
block of $C$ in (\ref{eq:cs}): $\hat{C}_{\mathbf{s}_{x},\mathbf{s}_{d}}=C_{\mathbf{s}_{x},\mathbf{s}_{d}} $.

\item When two subsets $\mathbf{s}_{x}$ and $\mathbf{s}_{d}$ are not well
separated, the near-field interaction $(\kappa(d_{j},x_{i}))_{x_{i}\in\mathbf{s}_{x},d_{j}\in\mathbf{s}_{d}}$ is kept dense and each entry
$\kappa(d_{j},x_{k})$ can be evaluated accurately in terms of $y_{k}$ and
$\delta_{jk}$ as in (\ref{eq:localshift}). That is, $\hat{C}_{\mathbf{s}_{x},\mathbf{s}_{d}}=\left(  \frac{1}{\delta_{jk}-y_{k}}\right)  _{d_{k}+y_{k}\in\mathbf{s}_{x},d_{k}+\delta_{jk}\in\mathbf{s}_{d}} $. This has no
impact on the structures needed for FMM\ accelerations.

\item These ideas are then combined with the triangular FMM in Section
\ref{subsub:trifmm} so as to stably and quickly perform function evaluations
like (\ref{eq:g}) and solve the shifted secular equations.
\end{itemize}

This local shifting strategy successfully integrates the shifting technique
into the triangular FMM framework without sacrificing performance. It thus
ensures both the efficiency and the stability. We can then quickly and
reliably solve the shifted secular equations as in (\ref{eq:shiftedeq}) via
modified Newton's method to get updates as in (\ref{eq:newupdate}). The
overall complexity to find all the $n$ roots is still $O(n)$. In addition,
since the relevant functions are now evaluated more accurately than with the
method in \cite{hsseig}, the convergence is also improved. (This can be
confirmed from our tests later.) When the iterative solution of the shifted
secular equations converge, we can use the resulting $\eta_{k}$ values to
recover the desired eigenvalues as in (\ref{eq:evalnew}).

The local shifting strategy can also be used to stably apply triangular
FMM\ accelerations to other operations like finding the eigenmatrix. See the
next subsection.

\subsection{Structured eigenvectors via FMM with local shifting}

\label{sub:ev}

With the identified eigenvalues $\lambda_{k}$ in (\ref{eq:evalnew}), the
eigenvectors can be obtained stably as in \cite{gu95}. An eigenvector
corresponding to $\lambda_{k}$ looks like
\begin{equation}
\mathbf{q}_{k}=\left(
\begin{array}
[c]{ccccc}\frac{\hat{v}_{1}}{d_{1}-\lambda_{k}} & \cdots & \frac{\hat{v}_{k}}{d_{k}-\lambda_{k}} & \cdots & \frac{\hat{v}_{n}}{d_{n}-\lambda_{k}}\end{array}
\right)  ^{T}, \label{eq:denseeigvec}\end{equation}
where $\mathbf{\hat{v}}\equiv(\begin{array}
[c]{ccc}\hat{v}_{1} & \cdots & \hat{v}_{n}\end{array}
)^{T}$ is given by L\"{o}wner's formula
\begin{equation}
\hat{v}_{i}=\sqrt{\frac{\prod_{j}(\lambda_{j}-d_{i})}{\prod_{j\neq i}(d_{j}-d_{i})}},\quad i=1,2,\ldots,n. \label{eq:lowner}\end{equation}
To quickly form $\mathbf{\hat{v}}$, the usual FMM acceleration would look like
the following \cite{gu95}. Rewrite (\ref{eq:lowner}) as
\begin{equation}
\log\hat{v}_{i}=\frac{1}{2}\sum_{j=1}^{n}\log(|d_{i}-\lambda_{j}|)-\frac{1}{2}\sum_{j=1,j\neq i}^{n}\log|d_{i}-d_{j}|. \label{eq:lowner2}\end{equation}
Now let $G_{1}=\left(  \log|d_{i}-\lambda_{j}|\right)  _{n\times n}$,
$G_{2}=\left(  \log|d_{i}-d_{j}|\right)  _{n\times n}$, where the diagonals of
$G_{2}$ are set to be zero. Then
\begin{equation}
\log\mathbf{\hat{v}}=\frac{1}{2}(G_{1}\mathbf{e}-G_{2}\mathbf{e}).
\label{eq:logv}\end{equation}
$G_{1}\mathbf{e}$ and $G_{2}\mathbf{e}$ can thus be quickly evaluated by the
FMM with the kernel $\log|s-t|$.

As in \cite{gu95,hsseig}, the eigenvectors are often normalized to form an
orthogonal matrix
\begin{equation}
\hat{Q}=\left(  \frac{\hat{v}_{i}b_{j}}{d_{i}-\lambda_{j}}\right)  _{n\times
n}, \label{eq:eigvecs}\end{equation}
where\begin{equation}
\mathbf{b}\equiv(\begin{array}
[c]{ccc}b_{1} & \cdots & b_{n}\end{array}
)^{T},\quad\text{with\quad}b_{j}=\left(  \sum_{i=1}^{n}\frac{\hat{v}_{i}^{2}}{(d_{i}-\lambda_{j})^{2}}\right)  ^{-1/2}. \label{eq:b}\end{equation}
Again, the vector $\mathbf{b}$ can be quickly obtained via the FMM with the
kernel $\kappa(s,t)=\frac1{(s-t)^{2}}$. $\hat{Q}$ is a Cauchy-like matrix
which gives a structured form of the eigenvectors. The FMM\ with the kernel
$\kappa(s,t)=\frac1{s-t}$ can be used to quickly multiply $\hat{Q}$ to a vector.

Again, with the same reasons as before, all the stability measurements make it
challenging to apply the usual FMM\ to accelerate operations like the
evaluations of $\log\mathbf{v}$ in (\ref{eq:logv}) and $\mathbf{b}$ in
(\ref{eq:b}) and the application of $\hat{Q}$ to a vector. On the other hand,
just like the discussions in Section \ref{sub:localshift}, the local shifting
strategy still applies with appropriate kernels $\kappa(s,t)$.

Thus, instead of directly applying the usual FMM\ accelerations in
\cite{hsseig}, we use triangular FMM accelerations with local shifting. For
example, with the gaps $\eta_{k}$ from the shifted secular equation solution,
it is preferred to use $\delta_{ik}-\eta_{k}$ in place of $d_{i}-\lambda_{k}$
in the computation of some entries of $\mathbf{q}_{k}$ for accuracy purpose
\cite{lapack,bun78,dong87,gu95} when $d_{i}$ and $\lambda_{k}$ are very close.
Note that, with $\delta_{jk}$ in (\ref{eq:deltajk}), (\ref{eq:denseeigvec})
can be written as
\begin{equation}
\mathbf{q}_{k}=\left(
\begin{array}
[c]{ccccc}\frac{\hat{v}_{1}}{\delta_{1k}-\eta_{k}} & \cdots & \frac{\hat{v}_{k}}{-\eta_{k}} & \cdots & \frac{\hat{v}_{n}}{\delta_{nk}-\eta_{k}}\end{array}
\right)  ^{T}. \label{eq:qk}\end{equation}
Then when an entry of $\mathbf{q}_{k}$ belongs to a near-field block of
$\hat{Q}$, its representation in (\ref{eq:qk}) is used. Otherwise, we use its
form in (\ref{eq:denseeigvec}). This preserves the far-field rank structure
and makes the local shifting idea go through.

Thus, triangular FMM accelerations with local shifting can be used to reliably
represent and apply $\hat{Q}$. Note that\begin{equation}
\hat{Q}=\operatorname{diag}(\mathbf{\hat{v}})\left(  \frac{1}{d_{i}-\lambda_{j}}\right)  _{n\times n}\operatorname{diag}(\mathbf{b}).
\label{eq:qscale}\end{equation}
We then store the following five vectors so as to stably retrieve $\hat{Q}$:\begin{equation}
\mathbf{\hat{v}},\ \mathbf{b},\ \mathbf{d}\equiv(\begin{array}
[c]{ccc}d_{1} & \cdots & d_{n}\end{array}
)^{T},\ \boldsymbol{\lambda}\equiv(\begin{array}
[c]{ccc}\lambda_{1} & \cdots & \lambda_{n}\end{array}
)^{T},\ \boldsymbol{\eta}\equiv(\begin{array}
[c]{ccc}\eta_{1} & \cdots & \eta_{n}\end{array}
)^{T}. \label{eq:vec}\end{equation}
Here, we have the storage of one more vector $\boldsymbol{\eta}$ than that in
\cite{hsseig}. This only slightly increase the storage, but the stability is
substantially enhanced.

\subsection{Overall eigendecomposition and structure of the eigenmatrix $Q$}

The overall conquering framework is similar to \cite{hsseig}, but with all the
new stability measurements integrated. Also, the structure of the eigenmatrix
$Q$ is only briefly mentioned in \cite{hsseig} in a vague way. Here, we would
like to give a precise description of $Q$ resulting from the conquering
process and point out an essential component that is missing from
\cite{hsseig}.

The conquering process is performed following the postordered traversal of the
HSS tree $\mathcal{T}$ of $A$, where at each node $i\in\mathcal{T}$, a local
eigenproblem is solved. For a leaf node $i$, suppose $\hat{D}_{i}$ is the
(small) diagonal generator resulting from the overall dividing process.
Compute the dense eigenproblem $\hat{D}_{i}=Q_{i}\Lambda_{i}Q_{i}^{T}$. Then
$Q_{i}$ is a \emph{local eigenmatrix} associated with $i$.

For a non-leaf node $p$ with children $i$ and $j$, the local eigenproblem is
to find an eigendecomposition like in (\ref{eq:dpeig}) based on
(\ref{eq:subeig}) and (\ref{eq:dp}). However, unlike (\ref{eq:rankrupd}) where
a diagonal plus low-rank update eigendecomposition is computed, it is
\emph{necessary to reorder} the diagonal entries of $\operatorname{diag}(\Lambda_{i},\Lambda_{j})$ due to the need to explore structures in the
FMM\ accelerations that rely on the locations of the eigenvalues. Let $P_{p}$
represent a sequence of permutations for deflation and for ordering the
diagonal entries of $\operatorname{diag}(\Lambda_{i},\Lambda_{j})$ from the
smallest to the largest. (Note that the need for $P_{p}$ is not clearly
mentioned in \cite{hsseig}.) Also let the eigendecomposition of the
\emph{permuted} diagonal plus low-rank update problem be\begin{equation}
P_{p}[\operatorname{diag}(\Lambda_{i},\Lambda_{j})+\hat{Z}_{p}\hat{Z}_{p}^{T}]P_{p}^{T}=\hat{Q}_{p}\Lambda_{p}\hat{Q}_{p}^{T}, \label{eq:local}\end{equation}
where $\hat{Z}_{p}$ is given in (\ref{eq:zupdate}). Write $D_{p}$ in
(\ref{eq:dp}) as $\hat{D}_{p}$ since $D_{p}$ is likely updated after the
multilevel dividing process. Then we have the following eigendecomposition:
\begin{equation}
\hat{D}_{p}=Q_{p}\Lambda_{p}Q_{p}^{T},\quad\text{with\quad}Q_{p}=\operatorname{diag}(Q_{i},Q_{j})P_{p}^{T}\hat{Q}_{p}, \label{eq:dphateig}\end{equation}
where $Q_{i}$ and $Q_{j}$ are eigenmatrices of $\hat{D}_{i}$ and $\hat{D}_{j}$
obtained in steps $i$ and $j$, respectively. Then the conquering process
proceeds similarly.

Here for convenience, we say $Q_{p}$ is a \emph{local eigenmatrix} and
$\hat{Q}_{p}$ is an \emph{intermediate eigenmatrix}. The difference between
the two is that a local eigenmatrix is an eigenmatrix of a local HSS\ block
while the latter is an eigenmatrix of a diagonal plus low-rank update problem.
A\ local eigenmatrix is formed by a sequence of intermediate ones. Since
$\hat{Q}_{p}\Lambda_{p}\hat{Q}_{p}^{T}$ in (\ref{eq:local}) is obtained by
solving $r$ consecutive rank-$1$ update eigenproblems, the intermediate
eigenmatrix $\hat{Q}_{p}$ is the product of $r$ Cauchy-like matrices like in
(\ref{eq:eigvecs}). Of course, when FMM\ accelerations and deflation are
applied, the eigendecomposition is approximate.

Then the overall eigenmatrix $Q$ is given in terms of all the intermediate
eigenmatrices, organized with the aid of the tree $\mathcal{T}$. Its precise
form is missing from \cite{hsseig}. Here, we give an accurate way to
understand its structure as follows.

\begin{lemma}
\label{lem:q}Assemble all the intermediate eigenmatrices and permutation
matrices corresponding to the nodes at a level $l$ of $\mathcal{T}$ as
\begin{equation}
Q^{(l)}=\operatorname{diag}(\hat{Q}_{i},\quad i\text{: at level }l\text{ of
}\mathcal{T}),\quad P^{(l)}=\operatorname{diag}(P_{i},\quad i\text{: at level
}l\text{ of }\mathcal{T}). \label{eq:ql}\end{equation}
Then the final eigenmatrix $Q$ has the form (illustrated in Figure
\ref{fig:eigmat})
\begin{equation}
Q=Q^{(l_{\max})}{\textstyle\prod\limits_{l=l_{\max}-1}^{0}}(P^{(l)}Q^{(l)}),
\label{eq:eigmat}\end{equation}
where level $l_{\max}$ is the leaf level of $\mathcal{T}$ and
$\operatorname{root}(\mathcal{T})$ is at level $0$. In addition, $Q$ also
corresponds to (\ref{eq:dphateig}) with $p$ set to be $\operatorname{root}(\mathcal{T})$.
\end{lemma}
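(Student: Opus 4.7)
The plan is to prove Lemma \ref{lem:q} by induction on the level $l$ of the HSS tree $\mathcal{T}$, working from the leaf level $l_{\max}$ upward to the root at level $0$. The key engine is the local recursion (\ref{eq:dphateig}), which expresses the local eigenmatrix $Q_p$ at each non-leaf node $p$ as $\operatorname{diag}(Q_i,Q_j)\,P_p^T\hat{Q}_p$, where $i,j$ are the children of $p$. (Here $P_p^T$ appears rather than $P_p$ in the explicit expansion, so (\ref{eq:eigmat}) should be read with this convention for the permutation factors; since every $P_p$ is a permutation, this is only a matter of labeling.) The proof is essentially a layered application of this formula, organized by level.

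For the base case at the leaf level $l_{\max}$, each leaf $i$ produces $Q_i$ from the dense eigendecomposition of $\hat{D}_i$. Identifying $\hat{Q}_i$ with $Q_i$ at leaves, the block-diagonal matrix $Q^{(l_{\max})}$ defined in (\ref{eq:ql}) is exactly the collection of leaf-level local eigenmatrices. The inductive hypothesis I would carry is that after assembling the factors for levels $l_{\max},l_{\max}-1,\ldots,l+1$, the partial product $Q^{(l_{\max})}\prod_{l'=l_{\max}-1}^{l+1}(P^{(l')}Q^{(l')})$ is block diagonal with respect to the partition of indices induced by the nodes at level $l+1$, and the block associated with each node $p$ at that level equals the local eigenmatrix $Q_p$.

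For the inductive step, I would fix a node $p$ at level $l$ with children $i,j$. Because each $P_q$ and each $\hat{Q}_q$ is supported on the index range of node $q$'s descendants only, both $P^{(l)}$ and $Q^{(l)}$ are block diagonal conformally with the level-$l$ partition, and the $p$-block of $P^{(l)}Q^{(l)}$ equals $P_p^T\hat{Q}_p$. Restricting the newly accumulated product to the descendant range of $p$ and applying the inductive hypothesis on the two child blocks gives $\operatorname{diag}(Q_i,Q_j)\,P_p^T\hat{Q}_p$, which is exactly $Q_p$ by (\ref{eq:dphateig}). Iterating down to level $0$, where $\operatorname{root}(\mathcal{T})$ spans the full index range, yields the asserted formula (\ref{eq:eigmat}) and identifies $Q$ with $Q_{\operatorname{root}(\mathcal{T})}$ as claimed. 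The main obstacle is purely combinatorial bookkeeping: one must verify that every $P_q$ acts only within the index range of $q$'s descendants, so that the assemblies $P^{(l)}$ and $Q^{(l)}$ are genuinely block diagonal on the level-$l$ partition and the inductive step preserves the block structure. This holds because $P_q$ permutes only the diagonal entries of $\operatorname{diag}(\Lambda_i,\Lambda_j)$ for the children $i,j$ of $q$, all of which lie inside the index range of node $q$.
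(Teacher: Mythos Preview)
Your proposal is correct; the paper itself states Lemma \ref{lem:q} without proof, treating it as a direct structural consequence of the recursive formula (\ref{eq:dphateig}), so your level-wise induction is precisely the natural formalization of what the paper leaves implicit. Your observation about the discrepancy between $P_p^T$ in (\ref{eq:dphateig}) and $P^{(l)}=\operatorname{diag}(P_i)$ in (\ref{eq:ql}) is also valid: the product (\ref{eq:eigmat}) matches the recursion only if one reads $P^{(l)}$ as the block diagonal of the $P_i^T$, which is indeed a harmless relabeling since each $P_i$ is a permutation.
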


\begin{figure}[ptbh]
\centering\includegraphics[height=0.8in]{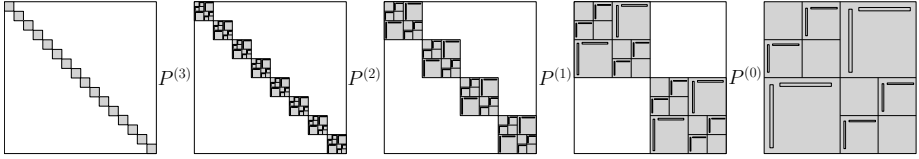}\caption{Illustration of
the structure of the eigenmatrix $Q$, where $l_{\max}=4$ and each structured
diagonal block (marked in gray) is for an intermediate eigenmatrix $\hat
{Q}_{p}$ associated with a nonleaf node $p$.}\label{fig:eigmat}\end{figure}

Thus, $Q$ can be understood in terms of either (\ref{eq:eigmat}) or the local
eigenmatrices. Lemma \ref{lem:q} gives an efficient way to apply $Q$ or
$Q^{T}$ to a vector, where the triangular FMM with local shifting is again
used to multiply the intermediate eigenmatrices with vectors. Note that with a
very similar procedure, a local eigenmatrix $Q_{i}$ or its transpose can be
conveniently applied to a vector. Such an application process is used to multiply
the local eigenmatrices $Q_{i}^{T}$ and $Q_{j}^{T}$ to $Z_{p}$ as in
(\ref{eq:zupdate}) so as to quickly form $\hat{Z}_{p}$ used in (\ref{eq:local}).

In addition, as mentioned in \cite{hsseig}, each intermediate eigenmatrix
$\hat{Q}_{i}$ has small off-diagonal numerical ranks. An off-diagonal
numerical rank result given in \cite{hsseig} is in terms of entrywise
approximations. The overall eigenmatrix $Q$ itself does not necessarily have a
small off-diagonal numerical rank, so a remark in \cite{hsseig} is not
precise. In fact, a precise off-diagonal numerical rank bound for $\hat{Q}_{i}$ in terms of singular value truncation can be shown based on the studies
in \cite{mhs}. Then, if the matrices $P^{(l)}$ are dropped from $Q$ in
(\ref{eq:eigmat}), the resulting matrix has small off-diagonal numerical ranks.

The main algorithms used in SuperDC are shown in the supplementary materials. When $A$ is given in terms of an HSS\ form with HSS rank $r$,
the total complexity for computing the eigendecomposition (\ref{eq:eig}) can
be counted following \cite[Section 3.1]{hsseig} and is $O(r^{2}n\log^{2}n)$.
(There is an erratum for \cite{hsseig} in the flop count since $r$ in equation
(3.1)\ of \cite[Section 3.1]{hsseig} should be $r^{2}$.) Note that the use of
all the new stability techniques here does not change the overall complexity.
Every local eigenmatrix $\hat{Q}_{i}$ is represented by a sequence of $r$
Cauchy-like matrices like in (\ref{eq:eigvecs}). Each such a Cauchy-like
matrix is stored with the aid of five vectors like in (\ref{eq:vec}). The
storage for $Q$ is then $O(rn\log n)$ and the cost to apply $Q$ or $Q^{T}$ to
a vector is $O(rn\log n)$ as in \cite{hsseig}.

\section{Numerical experiments\label{sec:tests}}

We then make a comprehensive test of the SuperDC eigensolver in terms of
different types of matrices and demonstrate its efficiency and accuracy.
SuperDC has been implemented in Matlab (available from
https://www.math.purdue.edu/\string~xiaj) and is compared with the highly
optimized Matlab \textsf{eig} function for computing the eigendecomposition.
We also show the significance of our stability techniques. The accuracy
measurements follow those in \cite{gu95,hsseig}:
\begin{align*}
\hspace{3cm}\gamma &  =\max\limits_{1\leq k\leq n}\frac{\Vert A\mathbf{q_{k}}-\lambda_{k}\mathbf{q}_{k}\Vert_{2}}{n\Vert A\Vert_{2}} &  &
\text{(residual),}\\
\delta &  =\frac{\sqrt{\sum_{k=1}^{n}(\lambda_{k}^{\ast}-\lambda_{k})^{2}}}{n\sqrt{\sum_{k=1}^{n}(\lambda_{k}^{\ast})^{2}}} &  &  \text{(relative
error),}\\
\theta &  =\max\limits_{1\leq k\leq n}\frac{\Vert Q^{T}{\mathbf{q}}_{k}-\mathbf{e}_{k}\Vert_{2}}{n} &  &  \text{(loss of orthogonality),}\end{align*}
where $\lambda_{k}^{\ast}$'s are eigenvalues from \textsf{eig} and are
considered as the exact results. The triangular FMM routine is developed based
on a code used in \cite{fmm1d}. The accuracy of each triangular FMM is set to
reach full machine precision so that it does not interfere with the
orthogonality of the eigenvectors.
The tests are performed with four 2.60GHz cores and 80GB memory on a node at a
cluster of Purdue RCAC. The usage of 80GB memory is just to accommodate the
need of \textsf{eig} for larger matrices.


\begin{example}
\label{ex1}We first consider a symmetric tridiagonal matrix $A$.
The classical divide-and-conquer eigensolver does not need tridiagonal
reduction and can be directly applied to $A$ with $O(n^{3})$ cost and
$O(n^{2})$ storage. For our SuperDC eigensolver, the HSS representation of $A$
can be explicitly written out without any extra cost and its HSS rank is $r=2$
\cite{phdthesis}. (The HSS structure does not rely on the actual nonzero
entries, which are $3$ on the main diagonal and $-1$ on the first
superdiagonal and subdiagonal. Other numbers such as random ones are also
tested with similar performance observed.) The size $n$ of $A$ in the test
ranges from $8192$ to $262,144$. In the HSS\ form, the leaf-level diagonal
block size is $2048$. We use $\tau=10^{-10}$ in the deflation criterion
(Remark \ref{rem:deflation}).
\end{example}

The timing (in seconds) of SuperDC and \textsf{eig} are reported in Figure
\ref{fig:ex1}(a). The storage for the eigenmatrix $Q$ (in terms of nonzeros)
is given in Figure \ref{fig:ex1}(b). The costs of SuperDC in terms of the
eigendecomposition flops and the flops to apply $Q$ to a vector\ are given in
Figure \ref{fig:ex1}(c). SuperDC achieves nearly linear complexity in all the
aspects (timing, flops, and storage), while \textsf{eig} exhibits a cubic
trend in timing and an obvious quadratic storage (which is just $n^{2}$ for
storing the dense $Q$). In fact, the flop count of SuperDC in Figure
\ref{fig:ex1}(c) shows a pattern even slightly better than $O(n\log^{2}n)$.
The timing is slightly off, likely due to the implementation.

\begin{figure}[th]
\centering
\subfigure[Eigendecomposition timing]{
\includegraphics[width=.31\textwidth]{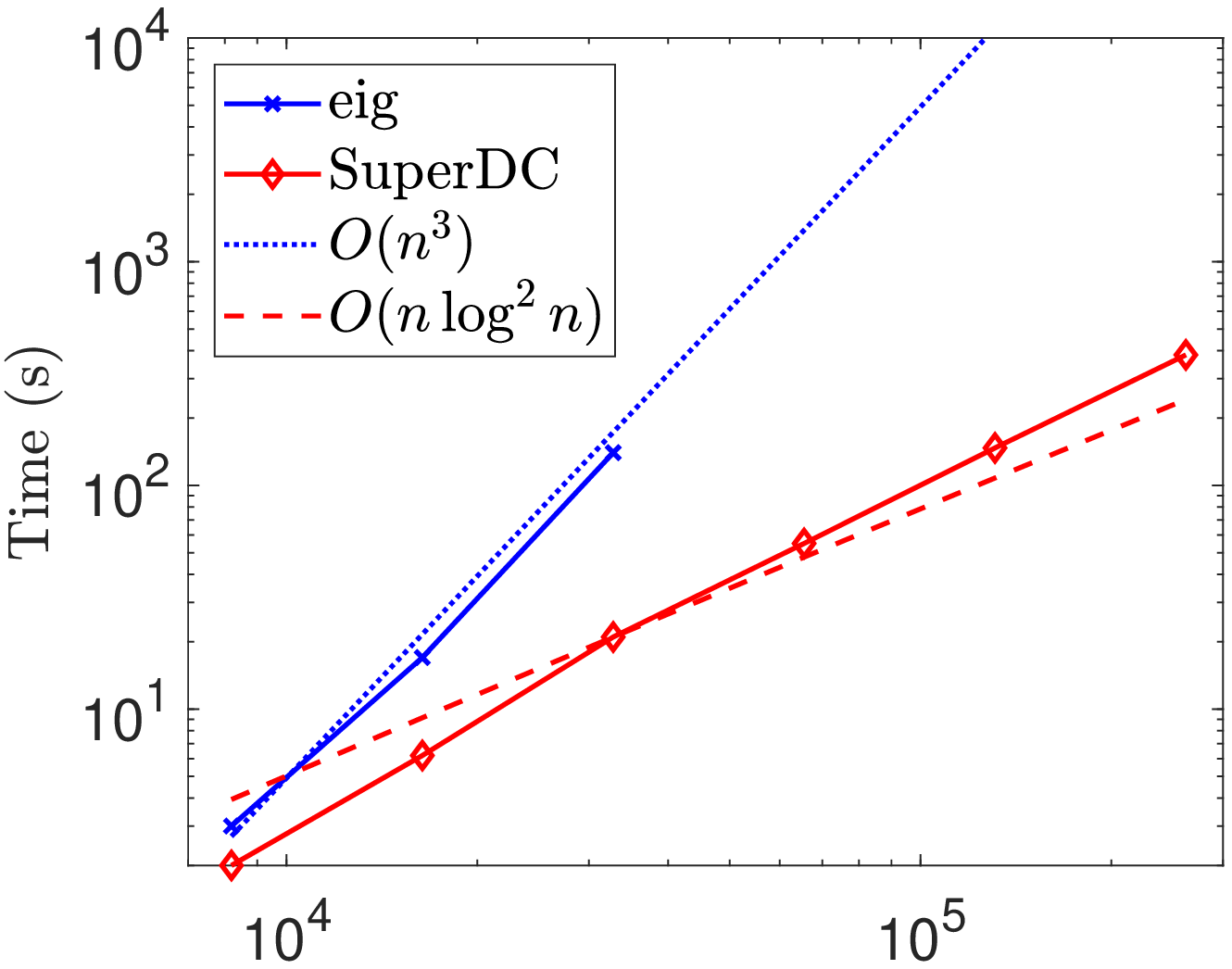}
} \subfigure[Storage]{
\includegraphics[width=.31\textwidth]{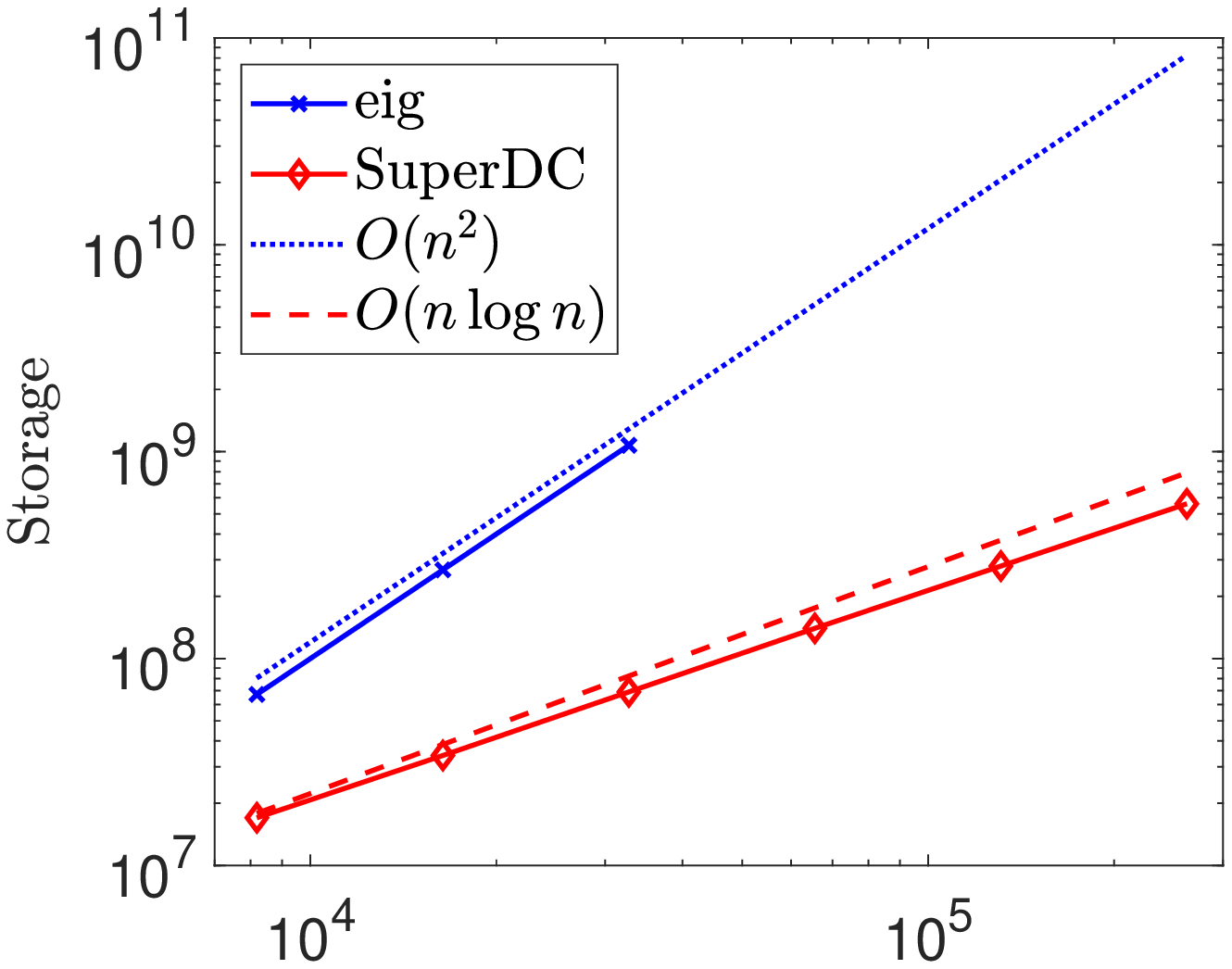}
} \subfigure[Flops of SuperDC]{
\includegraphics[width=.31\textwidth]{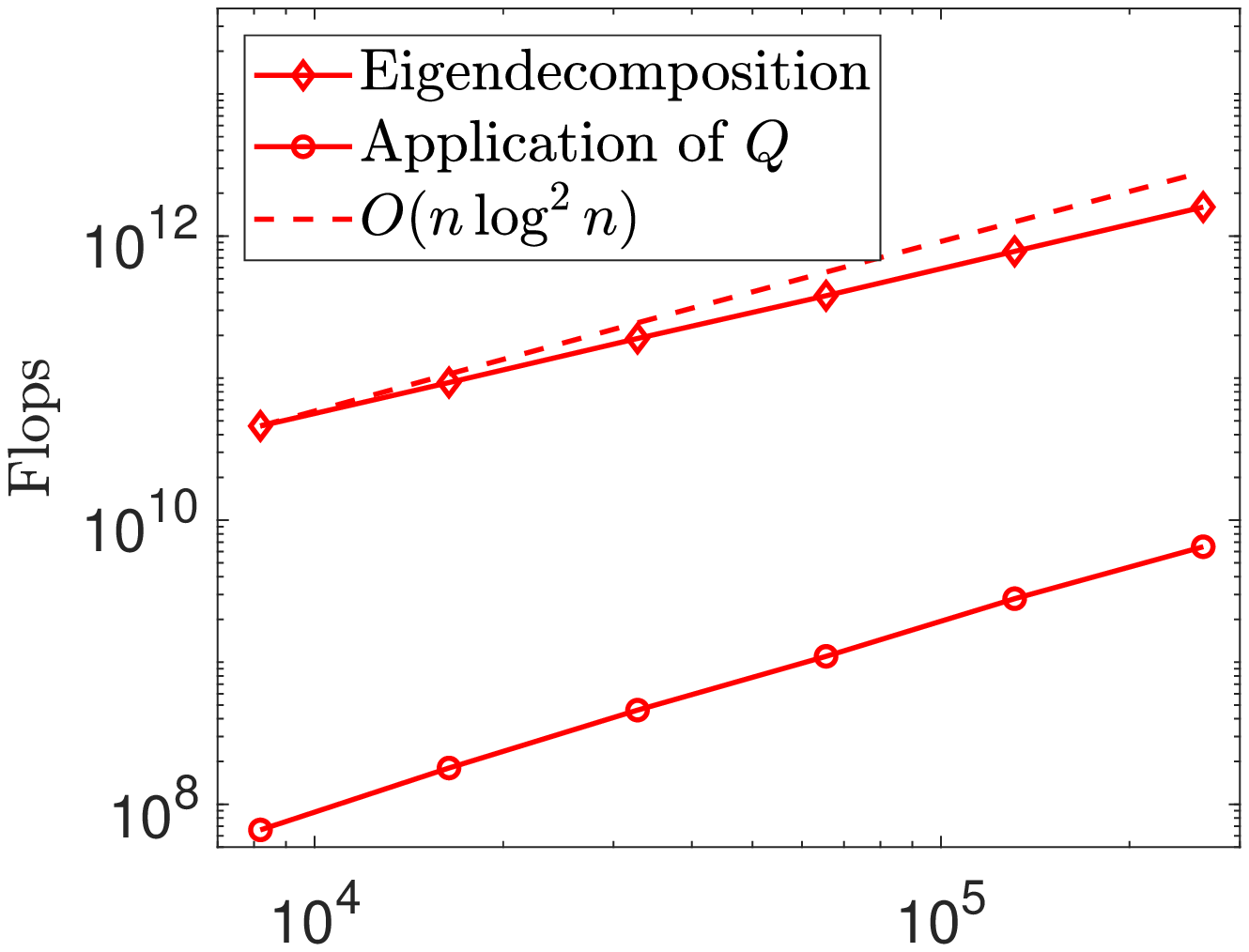}
}\caption{\emph{Example 1}. Timing and storage of SuperDC and \textsf{eig} and
flops of SuperDC.}\label{fig:ex1}\end{figure}

SuperDC is faster than \textsf{eig} for all the tested sizes. With $n=32,768$,
SuperDC is already over $6$ times faster than \textsf{eig} and takes only
about $6\%$ of the memory. (We also tested $n$ as small as $4096$ and SuperDC
already has lower storage and has comparable timing.) Note that \textsf{eig} runs
out of memory for larger $n$ due to the dense eigenmatrix, while SuperDC takes
much less memory and can reach much larger $n$.
For SuperDC, the timing is mostly for the conquering stage. For example, for
$n=262,144$, SuperDC takes $383.4$ seconds, where the dividing stage needs
just $16.1$ seconds. This also confirms that our strategy in Section
\ref{sec:divide} for reducing the ranks of low-rank updates is important since
it directly saves the cost in the conquering stage.

Table \ref{tb:tridiagaccuracy} shows the accuracy of SuperDC. The eigenvalues
are computed accurately and the loss of orthogonality $\theta$ is around
machine precision.

\begin{table}[h]
\caption{\emph{Example \ref{ex1}.} Accuracy of SuperDC, where some errors
($\delta$) are not reported since \textsf{eig} runs out of memory, and the
case $n=262,144$ is not shown since it takes too long to compute $\gamma$ and
$\theta$.}\label{tb:tridiagaccuracy}\centering\tabcolsep6pt\renewcommand{\arraystretch}{1.05}
\begin{tabular}
[c]{|c|c|c|c|c|c|}\hline
$n$ & $8,192$ & $16,384$ & $32,768$ & $65,536$ & $131,072$\\\hline
$\gamma$ & $1.9e-16$ & $8.8e-16$ & $5.2e-16$ & $3.0e-16$ & $1.5e-16$\\
$\delta$ & $1.6e-18$ & $8.0e-18$ & $2.9e-18$ &  & \\
$\theta$ & $6.4e-16$ & $2.3e-16$ & $1.9e-16$ & $2.1e-16$ & $1.8e-16$\\\hline
\end{tabular}
\end{table}

\begin{example}
\label{ex2}Next, we consider a symmetric matrix $A$ which is sparse and nearly
banded. That is, $A$ has a banded form with half bandwidth $5$ together with
some nonzero entries away from the band. The HSS\ form for $A$ can be
explicitly written out and has HSS rank $10$. The main diagonal entries are
equal to $3$ and the other entries in the band are equal to $-1$. The nonzero
entries away from the band are introduced by modifying some HSS generators for
the banded matrix constructed with the method in \cite{phdthesis}. In the
HSS\ form, the leaf-level diagonal block size is $2048$. We use $\tau
=10^{-10}$ in the deflation criterion (Remark \ref{rem:deflation}).

\end{example}

The entries away from the band break the banded structure of $A$. The
efficiency benefit of SuperDC becomes even more significant, as shown in
Figure \ref{fig:ex2}. At $n=32,678$, SuperDC is already about $11$ times
faster than \textsf{eig} and takes only about $7\%$ of the memory. Again,
\textsf{eig} runs out of memory when $n$ increases further, but SuperDC works
for much larger $n$ and demonstrates nearly linear complexity in the all aspects.

Table \ref{tb:bandedaccuracy} shows the accuracy of SuperDC. Similarly, high
accuracies are achieved.

\begin{figure}[th]
\centering
\subfigure[Eigendecomposition timing]{
\includegraphics[width=.31\textwidth]{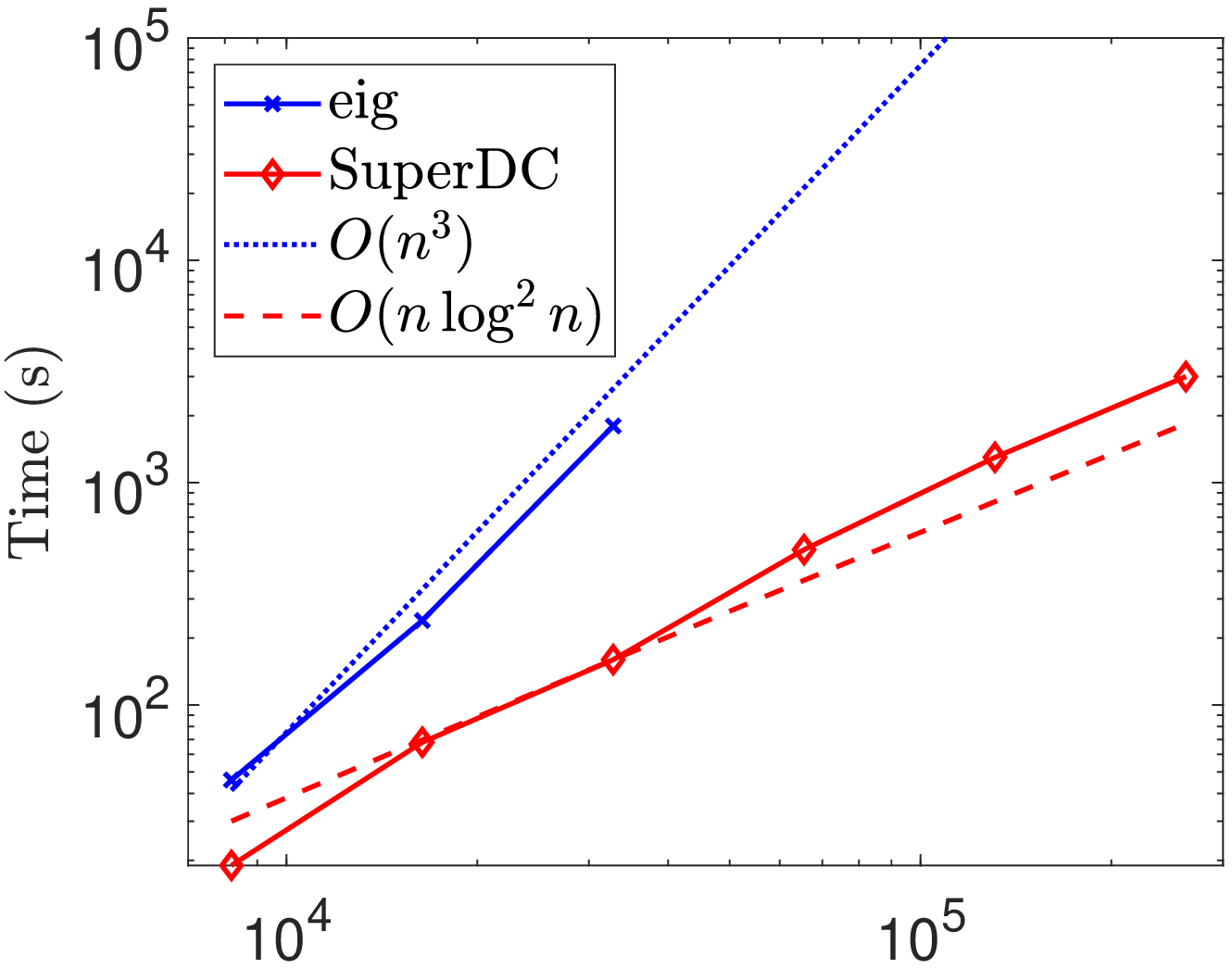}
} \subfigure[Storage]{
\includegraphics[width=.31\textwidth]{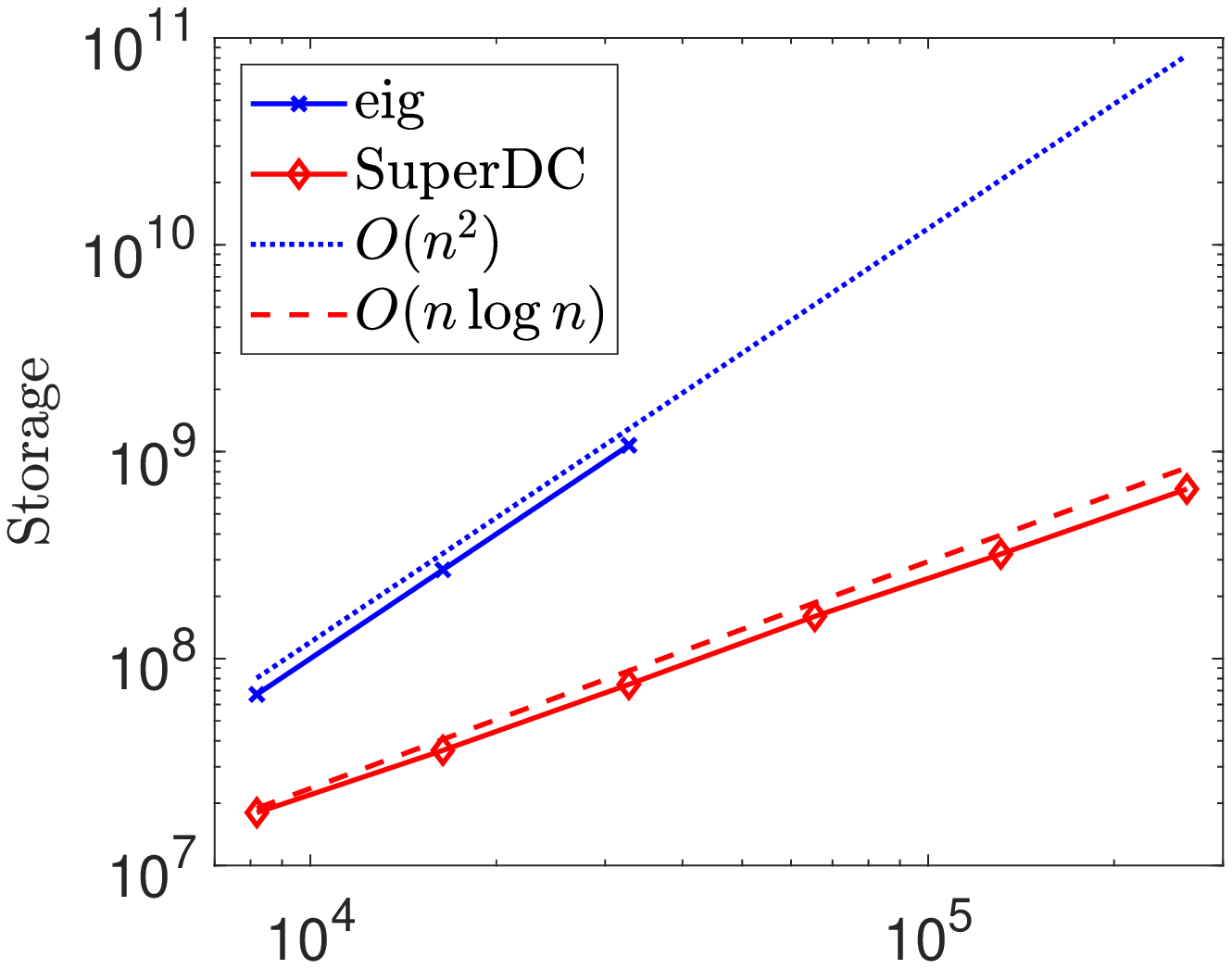}
} \subfigure[Flops of SuperDC]{
\includegraphics[width=.31\textwidth]{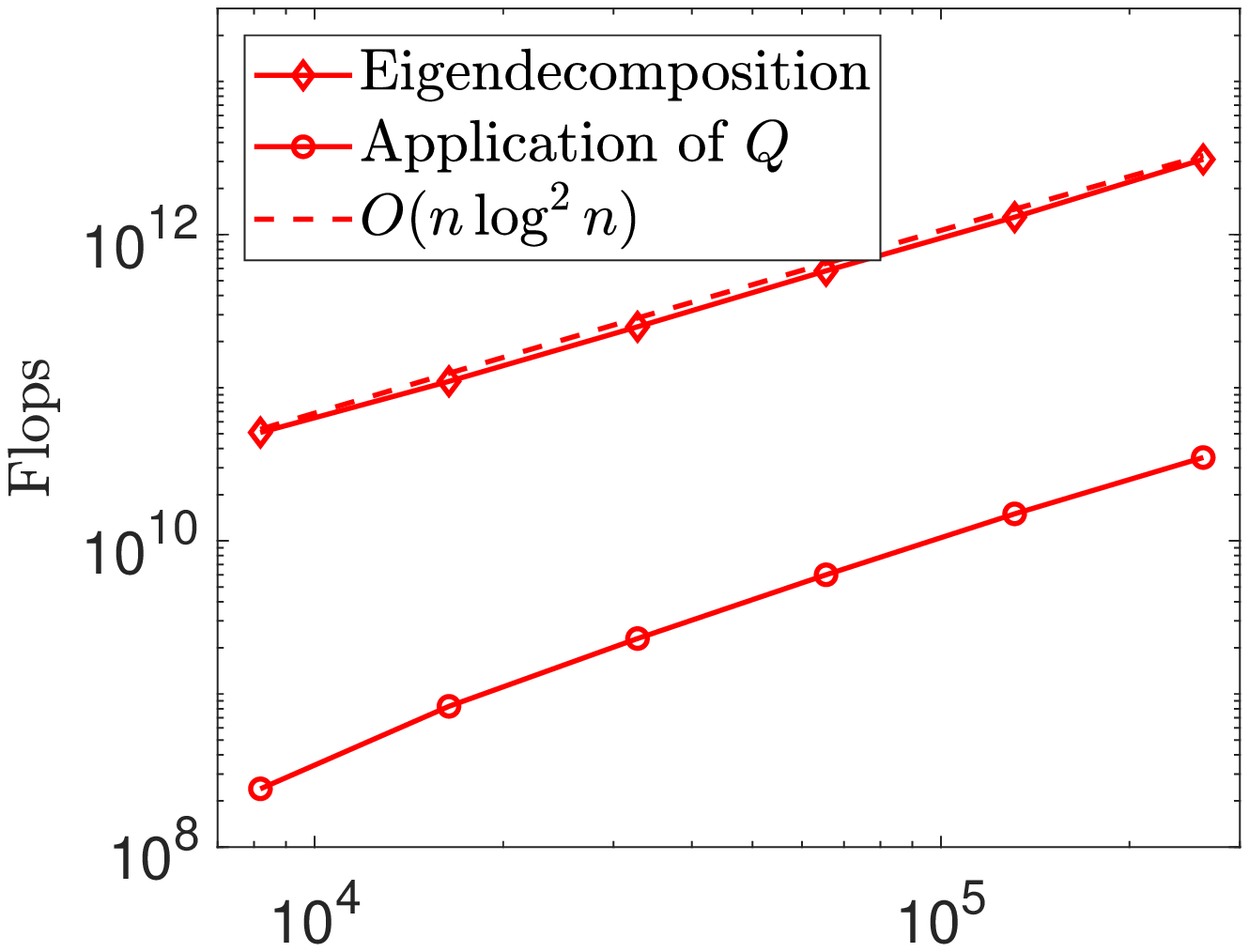}
}\caption{\emph{Example \ref{ex2}.} Timing and storage of SuperDC and
\textsf{eig} and flops of SuperDC.}\label{fig:ex2}\end{figure}

\begin{table}[h]
\caption{ \emph{Example \ref{ex2}.} Accuracy of SuperDC, where some errors
($\delta$) are not reported since \textsf{eig} runs out of memory, and the
case $n=262,144$ is not shown since it takes too long to compute $\gamma$ and
$\theta$.}\label{tb:bandedaccuracy}\centering\tabcolsep6pt\renewcommand{\arraystretch}{1.05}
\begin{tabular}
[c]{|c|c|c|c|c|c|}\hline
$n$ & $8,192$ & $16,384$ & $32,768$ & $65,536$ & $131,072$\\\hline
$\gamma$ & $6.5e-15$ & $6.8e-15$ & $1.2e-14$ & $1.1e-15$ & $1.1e-14$\\
$\delta$ & $1.4e-17$ & $6.5e-18$ & $8.1e-17$ &  & \\
$\theta$ & $1.8e-15$ & $2.8e-15$ & $2.4e-15$ & $4.8e-15$ & $2.3e-15$\\\hline
\end{tabular}
\end{table}

In addition, in order to demonstrate the importance of our local shifting
strategy, we have tested the eigensolver with triangular FMM\ accelerations applied to
the original secular equation instead of the shifted one. Other than the case
with $n=8192$, Matlab returns {\tt NaN} (not-a-number) for all the larger matrix
sizes due to cancellations. This confirms the risk of directly applying
FMM\ accelerations to the usual secular equation like in \cite{hsseig}.

\begin{example}
\label{ex3}Then consider a dense symmetric matrix $A$ which is a Toeplitz
matrix with its first row $\boldsymbol{\xi}=\begin{pmatrix}
\xi_{1} & \cdots & \xi_{n}\end{pmatrix}
$ given by\[
\xi_{1}=2\alpha,\quad\xi_{j}=\frac{\sin(2\alpha(j-1)\pi)}{(j-1)\pi
},\ j=2,3,\ldots,n,
\]
where $0<\alpha<1/2$. This is the so-called Prolate matrix that appears
frequently in signal processing. It is known to be extremely ill-conditioned
and has special spectral properties (see, e.g., \cite{var93}). In this
example, we set $\alpha=\frac{1}{4}$. It is known that any Toeplitz matrix can
be converted into a Cauchy-like matrix $\mathcal{C}$ which has small
off-diagonal numerical ranks \cite{toep,mar05,hsseig}. That is, $\mathcal{C}=\mathcal{F}A\mathcal{F}^{\ast}$, where $\mathcal{F}$ is the normalized
inverse DFT\ matrix. The eigendecomposition of $A$ can then be done via that
of $\mathcal{C}$. An HSS\ approximation to $\mathcal{C}$ may be quickly
constructed based on randomized methods in
\cite{parhssrsmf,mar11,mfhssrs,toeprs} coupled with fast Toeplitz
matrix-vector multiplications. The cost is nearly linear in $n$. Here, we use
a tolerance $10^{-10}$ in relevant compression steps, which is same as the
deflation tolerance $\tau$. In the HSS\ form, the leaf-level diagonal block
size is $2048$. SuperDC is applied to the resulting HSS\ form and compared
with \textsf{eig} applied to $A$. The size $n$ ranges from $4096$ to $65,536$.
\end{example}

In Figure \ref{fig:ex3}, the timing, storage, and flops are shown and are
consistent with the complexity estimates. The eigendecomposition with
SuperDC\ shows a dramatic efficiency advantage over \textsf{eig}. At
$n=32,768$, \textsf{eig} takes $1526.2$ seconds, while SuperDC only needs
$11.2$ seconds, which is a difference of about $136$ times. Also, the memory
saving is about $15$ times.

\begin{figure}[th]
\centering\subfigure[Eigendecomposition timing]{
\includegraphics[width=.31\textwidth]{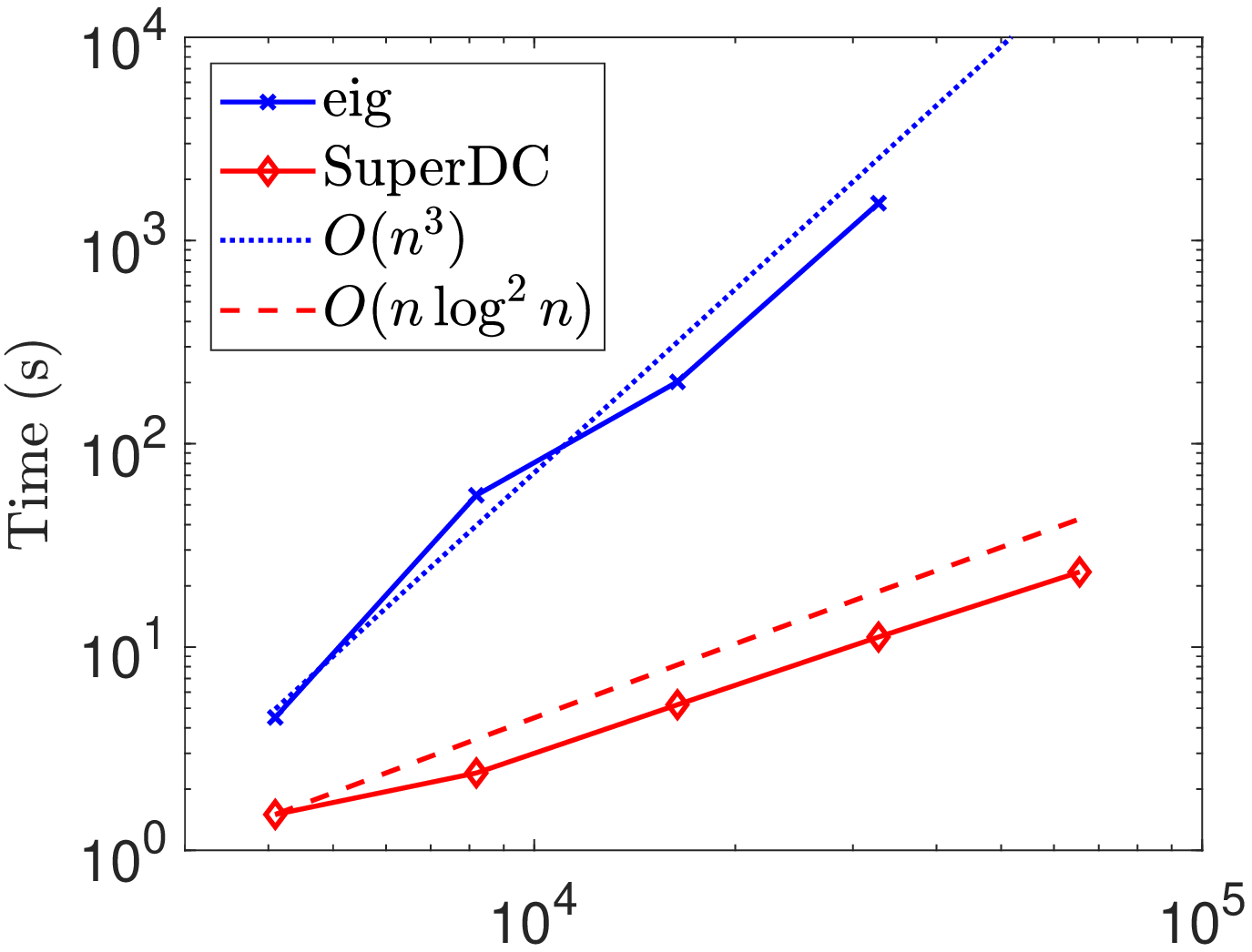}
} \subfigure[Storage]{
\includegraphics[width=.31\textwidth]{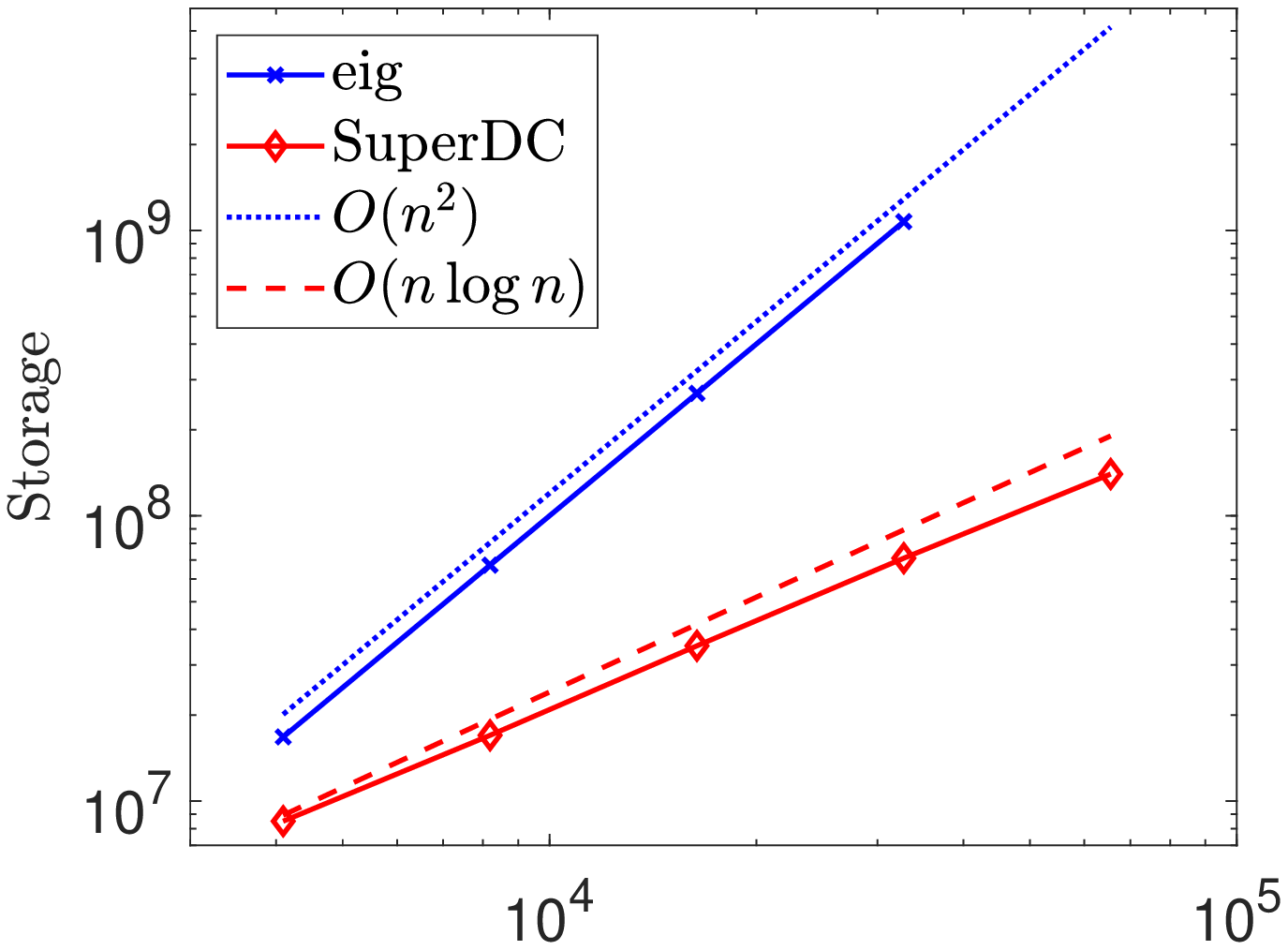}
} \subfigure[Flops of SuperDC]{
\includegraphics[width=.31\textwidth]{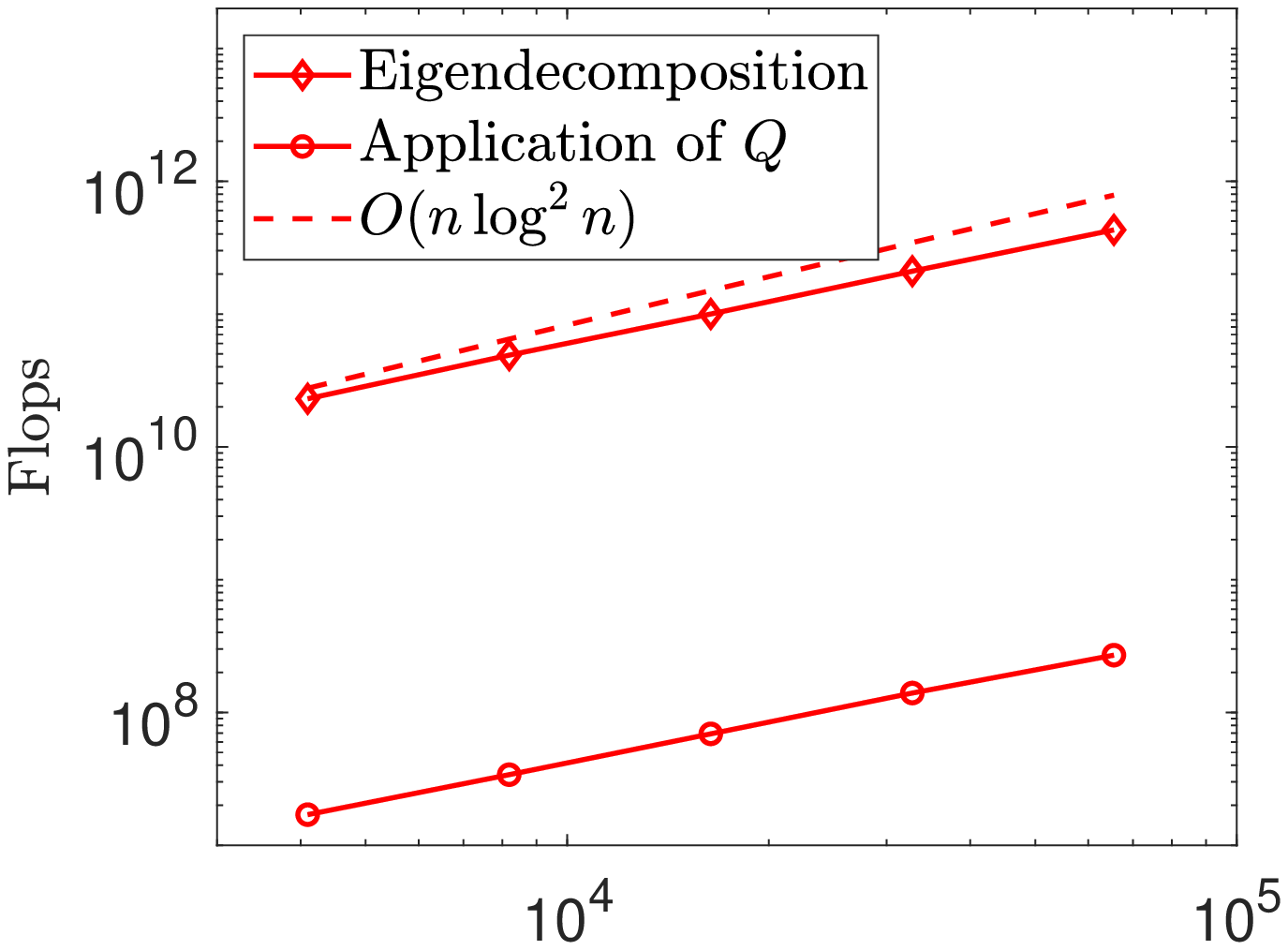}
}\caption{\emph{Example \ref{ex3}.} Timing and storage of SuperDC and
\textsf{eig} and flops of SuperDC.}\label{fig:ex3}\end{figure}

One thing we want to point out is that SuperDC has the theoretical complexity
$O(r^{2}n\log^{2}n)$, which may overestimate the actual cost. For example,
here $r$ is typically known to be $O(\log n)$. (This bound is based on
entrywise approximations, although a precise numerical rank may be slightly
higher \cite{mhs}.)\ One reason for the overestimate is that the flop count
does not take into consideration a levelwise rank pattern in \cite{hsscost}.
Another reason is our flexible deflation strategy in Remark
\ref{rem:deflation}. The matrices actually have highly clustered eigenvalues,
which further leads to high efficiency gain.


Despite the clustered eigenvalues, SuperDC still computes the
eigendecomposition accurately. See Table \ref{tb:toepaccuracy}.

\begin{table}[h]
\caption{\emph{Example \ref{ex3}.} Accuracy of SuperDC, where the error
($\delta$) for $n=65,536$ is not reported since \textsf{eig} runs out of
memory.}\label{tb:toepaccuracy}\centering\tabcolsep6pt\renewcommand{\arraystretch}{1.05}
\begin{tabular}
[c]{|c|c|c|c|c|c|}\hline
$n$ & $4,096$ & $8,192$ & $16,384$ & $32,768$ & $65,536$\\\hline
$\gamma$ & $1.5e-16$ & $6.7e-14$ & $2.3e-17$ & $1.3e-17$ & $3.3e-18$\\
$\delta$ & $7.8e-15$ & $1.7e-15$ & $2.3e-14$ & $3.7e-14$ & \\
$\theta$ & $3.4e-17$ & $9.3e-17$ & $4.6e-17$ & $2.3e-17$ & $1.2e-17$\\\hline
\end{tabular}
\end{table}

\begin{example}
\label{ex4}Our last example is a discretized kernel matrix $A$ in \cite{cha06}
which is the evaluation of the function $\sqrt{|s-t|}$ at the Chebyshev points
$\cos(\frac{2i-1}{2n}\pi),i=1,2,\ldots,n$.
The HSS\ construction may be based on direct off-diagonal compression or
efficient analytical methods like in \cite{kercompr}. We use an existing
routine based on the former one for simplicity. To show the flexibility of
accuracy controls, we aim for moderate accuracy in this test by using a
compression tolerance $10^{-6}$ in the HSS\ construction, which is same as the
deflation tolerance $\tau$.
\end{example}

For this example, we can observe similar complexity results as in the previous
examples. See Figure \ref{fig:ex4}, where we still set the leaf-level diagonal
block size to be $2048$ in the HSS approximations. With the larger tolerance
than in the previous examples, we still achieve reasonable eigenvalue errors
and residuals as in Table \ref{ex4:kernelaccuracy}. The loss of orthogonality
is still close to machine precision. Thus for the remaining discussions, we
focus on some stability advantages of SuperDC.

\begin{figure}[th]
\centering\subfigure[Eigendecomposition timing]{
\includegraphics[width=.31\textwidth]{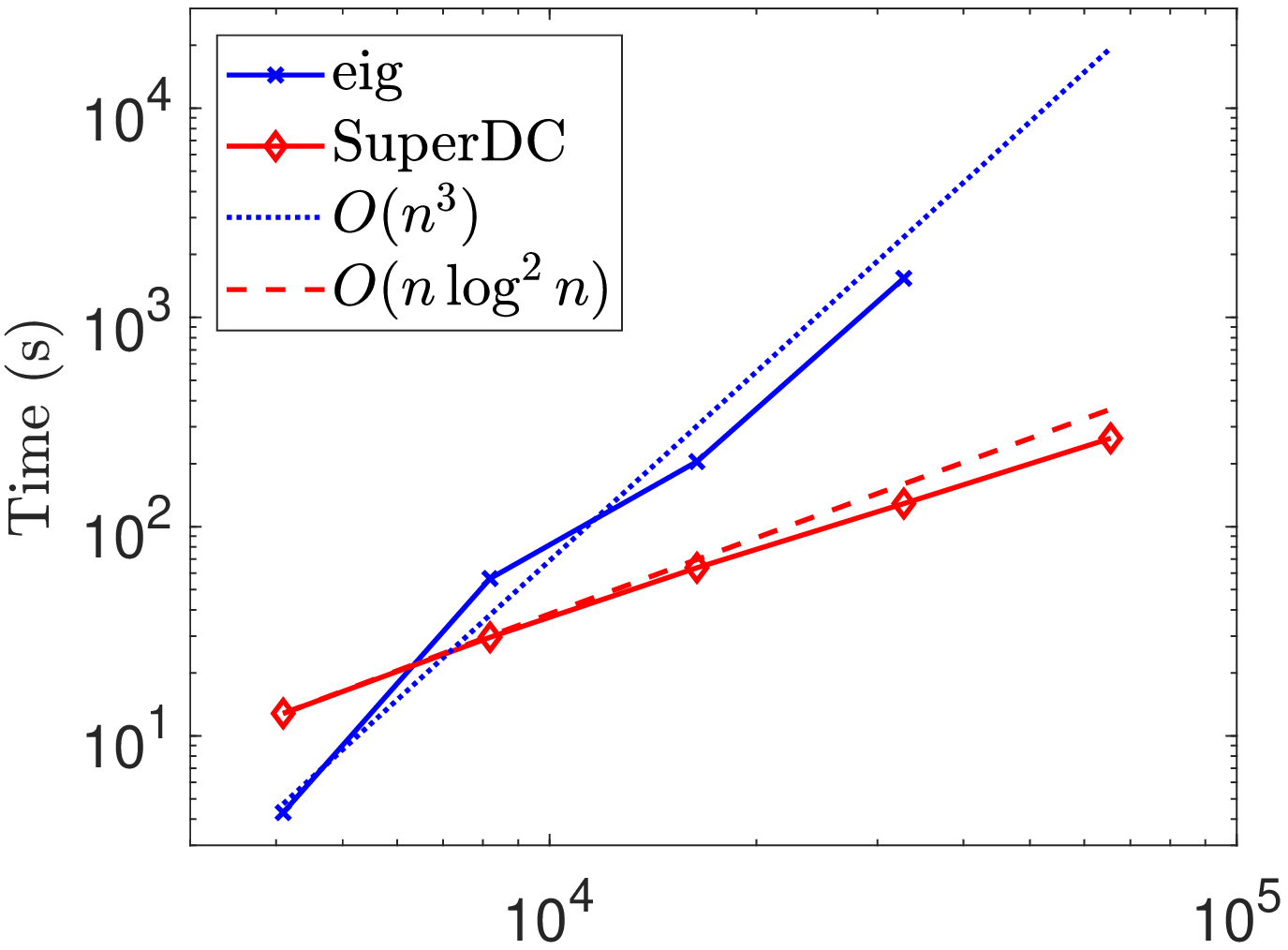}
} \subfigure[Storage]{
\includegraphics[width=.31\textwidth]{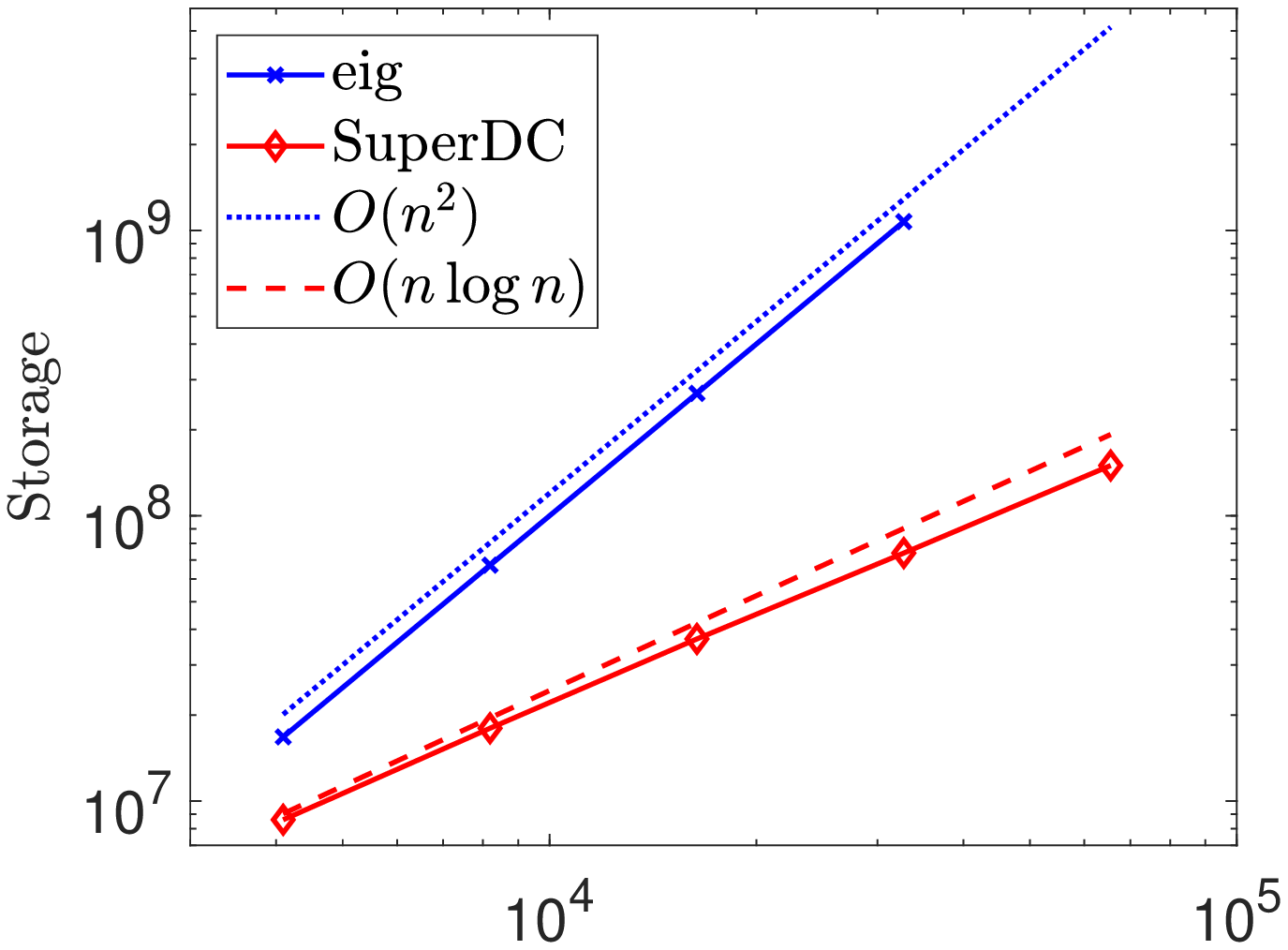}
}\subfigure[Flops of SuperDC]{
\includegraphics[width=.31\textwidth]{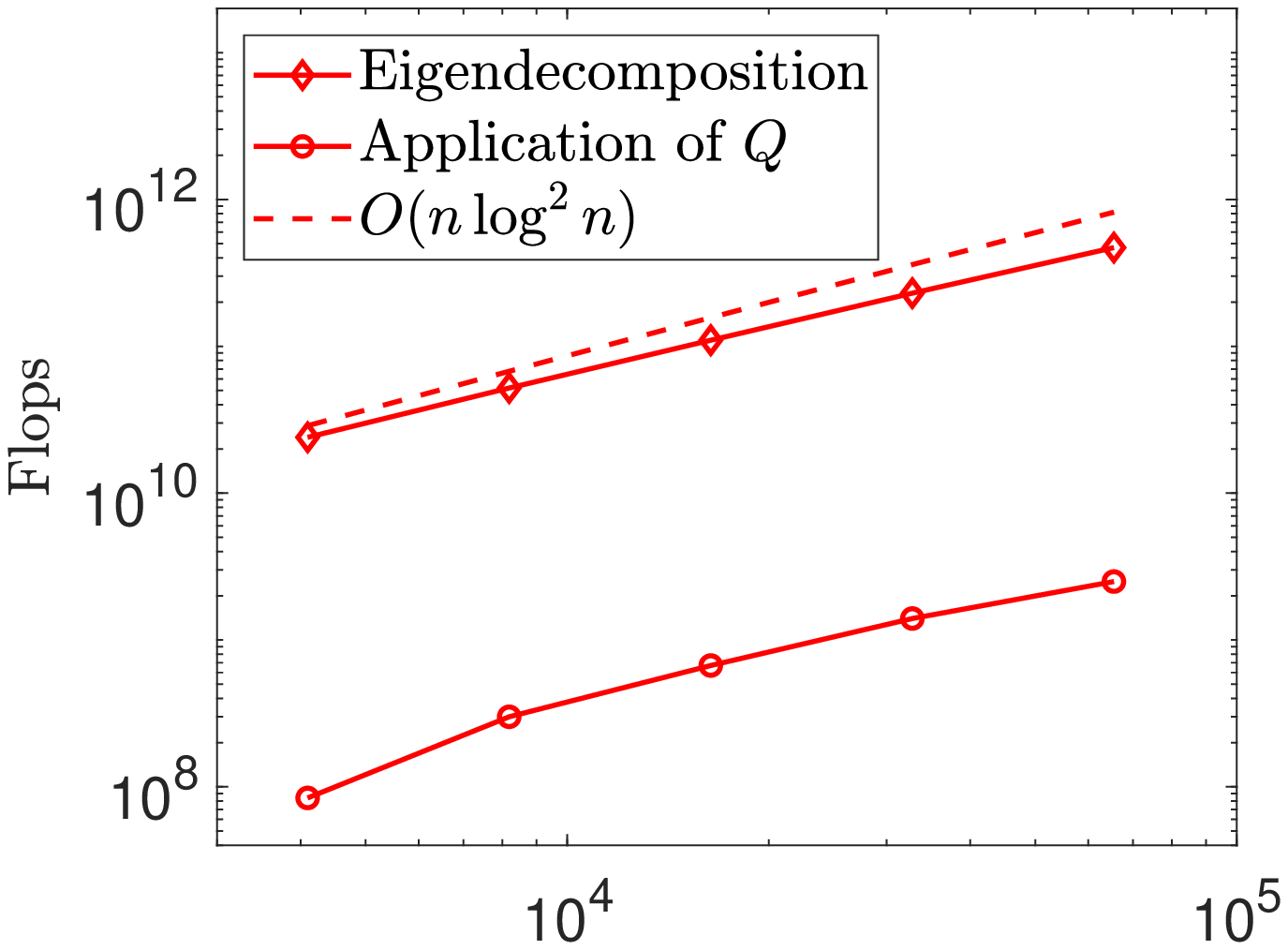}
}\caption{\emph{Example \ref{ex4}.} Timing and storage of SuperDC and
\textsf{eig} and flops of SuperDC.}\label{fig:ex4}\end{figure}

\begin{table}[h]
\caption{\emph{Example \ref{ex4}.} Accuracy of SuperDC, where the error
($\delta$) for $n=65,536$ is not reported since \textsf{eig} runs out of
memory.}\label{ex4:kernelaccuracy}\centering\tabcolsep6pt\renewcommand{\arraystretch}{1.05}
\begin{tabular}
[c]{|c|c|c|c|c|c|}\hline
$n$ & $4,096$ & $8,192$ & $16,384$ & $32,768$ & $65,536$\\\hline
$\gamma$ & $1.2e-10$ & $9.9e-11$ & $1.7e-10$ & $1.5e-10$ & $9.1e-11$\\
$\delta$ & $1.6e-11$ & $2.5e-11$ & $2.1e-11$ & $1.1e-11$ & \\
$\theta$ & $1.6e-15$ & $8.0e-15$ & $2.7e-15$ & $3.4e-15$ & $3.4e-15$\\\hline
\end{tabular}
\end{table}

Earlier in Example \ref{ex2}, it is shown that local shifting helps avoid
cancellations so that FMM\ accelerations can be applied reliably. In fact,
even if there is no cancellation in the original secular equation solution,
our local shifting strategy (for triangular FMM-accelerated solution of the
shifted secular equation) can further greatly benefit the convergence. To
illustrate this, we perform the following count. Suppose $r$ secular equations
are solved due to $r$ rank-$1$ updates associated with the root node of the
HSS\ tree $\mathcal{T}$. Let $\mu_{j}$ be the percentage of eigenvalues that
have \emph{not} converged after $5$ iterations during modified Newton's
solution of the secular equation associated with the $j$th rank-one update,
and let $\mu=\max_{1\leq j\leq r}\mu_{j}$. Table \ref{tab:ex4perc} reports
this maximum percentage $\mu$ with varying $n$. With local shifting, a vast
majority of those eigenvalues (about $99\%$ or more) converges within $5$
iterations. This is significantly better than the case without local shifting
(i.e., when the original secular equation is solved with FMM accelerations).

\begin{table}[h]
\caption{Maximum percentage ($\mu$) of eigenvalues not converged within $5$
iterations for solving the $r$ secular equations associated with
$\operatorname{root}(\mathcal{T})$.}\label{tab:ex4perc}\centering\tabcolsep6pt\renewcommand{\arraystretch}{1.05}
\begin{tabular}
[c]{|c|c|c|c|c|c|}\hline
$n$ & $4,096$ & $8,192$ & $16,384$ & $32,768$ & $65,536$\\\hline
With local shifting & $1.03\%$ & $0.75\%$ & $0.91\%$ & $0.89\%$ & $0.84\%$\\
Without local shifting & $69.1\%$ & $65.2\%$ & $65.0\%$ & $60.1\%$ &
$53.7\%$\\\hline
\end{tabular}
\end{table}

We would then also like to demonstrate the advantage of our stable dividing
strategy in Section \ref{sec:divide} as compared with the original one in
\cite{hsseig}. Following Propositions \ref{prop:b0} and \ref{prop:b}, we show
the norm growth of the $B,D$ generators after the dividing stage. For the
initial $B,D$ generators of the original HSS\ form, let $\tilde{B},\tilde{D}$
denote the updated generators after the entire dividing stage is finished.
Then let
\[
\rho_{B}=\max\limits_{i<\operatorname{root}(\mathcal{T})}\!\Vert B_{i}\Vert_{2},\quad\rho_{D}=\max\limits_{i\text{: leaf}}\Vert D_{i}\Vert_{2},\quad\rho_{\tilde{B}}=\max\limits_{i<\operatorname{root}(\mathcal{T})}\!\Vert\tilde{B}_{i}\Vert_{2},\quad\rho_{\tilde{D}}=\max\limits_{i\text{:
leaf}}\Vert\tilde{D}_{i}\Vert_{2}.
\]
In order to better show the norm growth after multilevel dividing, we set the
leaf-level diagonal block size to be $256$ here so as to have more levels. For
each $n$, Table \ref{ex4:norm} shows the number of levels in the HSS
approximation. When $n$ increases, the HSS tree $\mathcal{T}$ grows deeper.
Table \ref{ex4:norm} shows that $\Vert A\Vert_{2}$ and $\rho(B)$ grow roughly
linearly with $n$. However, $\rho_{\tilde{B}}$ and $\rho_{\tilde{D}}$ grow
exponentially with the original dividing stage in \cite{hsseig}, as predicted
by Proposition \ref{prop:b0}. This poses a stability risk. When $n$ grows
beyond a certain size, overflow happens.
(Note that $\rho_{\tilde{D}}$ has a larger magnitude than $\rho_{\tilde{B}}$,
which is consistent with Proposition \ref{prop:b0}.) In contrast, the growth
of $\rho_{\tilde{D}}$ and $\rho_{\tilde{B}}$ with our new dividing strategy is
much slower and roughly follows the growth pattern of $\rho(B)$, as predicted
by Proposition \ref{prop:b}. Accordingly, our algorithm can handle much larger
$n$ much more reliably. \begin{table}[h]
\caption{\emph{Example \ref{ex4}.} Norm growth of the $D,B$ generators after
the dividing stage, where $\infty$ means overflow.}\label{ex4:norm}\centering\tabcolsep6pt\renewcommand{\arraystretch}{1.05}
\begin{tabular}
[c]{|c|c|c|c|c|c|c|}\hline
\multicolumn{2}{|c|}{$n$} & $4,096$ & $8,192$ & $16,384$ & $32,768$ &
$65,536$\\\hline
\multicolumn{2}{|c|}{Number of levels} & $5$ & $6$ & $7$ & $8$ & $9$\\
\multicolumn{2}{|c|}{$\Vert A\Vert_{2}$} & $3.4e03$ & $6.8e03$ & $1.4e04$ &
$2.7e04$ & $5.4e04$\\\hline\hline
\multirow{2}{*}{Initial} & $\rho_{B}$ & $2.3e03$ & $4.6e03$ & $9.2e03$ &
$1.8e04$ & $3.7e04$\\
& $\rho_{D}$ & $6.1e01$ & $4.3e01$ & $3.1e01$ & $2.2e01$ & $1.5e01$\\\hline
After the original & $\rho_{\tilde{B}}$ & $5.5e24$ & $9.9e53$ & $2.1e117$ &
$4.2e253$ & $\infty$\\
dividing strategy & $\rho_{\tilde{D}}$ & $3.0e49$ & $9.9e107$ & $4.5e234$ &
$\infty$ & $\infty$\\\hline
After the new & $\rho_{\tilde{B}}$ & $2.3e03$ & $4.6e03$ & $9.2e03$ & $2.1e04$
& $5.5e04$\\
dividing strategy & $\rho_{\tilde{D}}$ & $4.7e03$ & $1.2e04$ & $3.4e04$ &
$8.6e04$ & $2.4e05$\\\hline
\end{tabular}
\end{table}

\section{Conclusions\label{sec:concl}}

In this work, we have designed the SuperDC eigensolver that is both superfast
and stable. It significantly improves the original divide-and-conquer
algorithm in \cite{hsseig} in both the stability and the algorithm design. A
series of stability techniques is built into the different stages of the
algorithm. In particular, we avoid an exponential norm growth risk in the
dividing stage via a balancing strategy and are further able to combine FMM
accelerations with several key stability safeguards that have been used in
practical divide-and-conquer algorithms. We also give a variety of algorithm
designs and structure studies that have been missing or unclear in
\cite{hsseig}. The comprehensive numerical tests confirm the nearly linear
complexity and much higher efficiency than the Matlab \textsf{eig} function
for the eigendecomposition of different types of HSS\ matrices. Nice accuracy
and eigenvector orthogonality have been observed. Comparisons also illustrate
the benefits of our stability techniques.

The SuperDC eigensolver makes it feasible to use full eigendecompositions to
solve various challenging numerical problems as mentioned at the beginning of
the paper. A list of applications is expected to be included in
\cite{superdcapp}. In addition, we expect that the novel local shifting
strategy and triangular FMM\ accelerations are also useful for other
FMM-related matrix computations when stability and accuracy are crucial. In
our future work, we plan to provide a high-performance parallel
implementation, which will extend the applicability of the algorithm to
large-scale numerical computations.

\newpage\setcounter{page}{1}

\headers{Supplementary materials}{Xiaofeng Ou and Jianlin Xia}
\thispagestyle{plain}

\renewcommand{\thealgorithm}{\arabic{algorithm}}

\begin{center}
{\bf\uppercase{Supplementary materials:\\ List of major algorithms}}
\end{center}
\bigskip
\noindent Title of paper: {\em SuperDC: Stable superfast divide-and-conquer eigenvalue decomposition}\\[2mm]
\noindent Authors: Xiaofeng Ou and Jianlin Xia

\bigskip
These supplementary materials are pseudocodes that can help better understand the major algorithms in the paper.
\begin{itemize}
\item Algorithm \ref{dividalg}: the HSS dividing stage.

\item Algorithm \ref{alg:secularalg}: solving the secular equation for the
eigenvalues with triangular FMM accelerations and local shifting.

\item Algorithm \ref{alg:conqueralg}: the conquering stage for producing the eigendecomposition.

\item Algorithm \ref{alg:superdcmv}: application of the a local eigenmatrix
$Q_{i}$ or its transpose to a vector. This is used in Algorithm
\ref{alg:conqueralg} and also can be used to apply the global eigenmatrix $Q$
or its transpose to a vector when $i=\operatorname{root}(\mathcal{T})$.


\end{itemize}

For notational
convenience, we use $r$ to represent the column sizes of all $Z_{i}$ matrices
in the pseudocodes. $\mathcal{T}_{i}$ is also used to denote the subtree of
$\mathcal{T}$ rooted at node $i\in\mathcal{T}$. $Z(:,j)$ means the $j$-th
column of $Z$.

The following utility routines are used in the algorithms. To save space, we
are not showing pseudocodes for these routines.

\begin{itemize}
\item $\mathsf{updhss}(D_{i},U_{i},H)$: for an HSS block $D_{i}$ corresponding
to the subtree $\mathcal{T}_{i}$, update its $D,B$ generators to get those of
$D_{i}-U_{i}HU_{i}^{T}$ using Lemma \ref{lem:genupd}.

\item $\mathsf{trifmm}(\mathbf{d},\mathbf{x},\boldsymbol{y},\mathbf{w},\kappa)$: compute a matrix-vector product $K\mathbf{w}$ with the triangular
FMM and local shifting as in Sections \ref{subsub:trifmm} and
\ref{sub:localshift}, where $K=(\kappa(d_{i},x_{j}))_{d_{i}\in\mathbf{d},x_{j}\in\mathbf{x}}$ is a kernel matrix and $\boldsymbol{y}$ is the gap
vector (for accurately evaluating $\mathbf{x}-\mathbf{d}$). Note that the
triangular FMM\ is used to multiply the lower triangular part of $K$ with
$\mathbf{w}$ and the strictly upper triangular part of $K$ with $\mathbf{w}$
and the final result is the sum of the two products.

\item $\mathsf{mnewton}(\boldsymbol{\psi},\boldsymbol{\phi},\boldsymbol{\psi
}^{\prime},\boldsymbol{\phi}^{\prime})$: use the modified Newton's method to
compute corrections to the current approximate gap as in (\ref{eq:newupdate}),
where $\boldsymbol{\psi},\boldsymbol{\phi},\boldsymbol{\psi}^{\prime
},\boldsymbol{\phi}^{\prime}$ look like (\ref{eq:psiphi}) and
(\ref{eq:psiphi2}).

\item $\mathsf{iniguess}(\mathbf{d},\mathbf{w})$: compute the initial guess as
in \cite{li93} for the solution of the secular equation (\ref{eq:seceq}).

\item $\mathsf{deflate}(\mathbf{d},\mathbf{v},\tau)$: apply deflation with the
criterion in Remark \ref{rem:deflation}.
\end{itemize}

\begin{algorithm}
[ptbh]\caption{SuperDC dividing stage}\label{dividalg}

\begin{algorithmic}
[1]\Procedure{\sf divide}{$\{D_i\}_{i\in \mathcal{T}},\{U_i\}_{i\in
\mathcal{T}},\{R_i\}_{i\in \mathcal{T}},\{B_i\}_{i\in \mathcal{T}} $}

\For{node $i = \text{root}(\mathcal{T}), \ldots, 1$}\Comment{Dividing
$D_{i}$ in a top-down traversal}

\If{$i$ is a non-leaf node}

\If{$\operatorname{colsize}(B_{c_1})\le \operatorname{rowsize}(B_{c_1})$}\algorithmiccomment{$c_1,c_2$: children of $i$}

\State$D_{c_{1}}\leftarrow\mathsf{updhss}(D_{c_{1}},U_{c_{1}},\frac{1}{\Vert
B_{c_{1}}\Vert_{2}}B_{c_{1}}B_{c_{1}}^{T})$\algorithmiccomment{Update generators of $D_{c_{1}}$\newline\mbox{}\hfill to get those of $D_{c_{1}}-\frac{1}{\Vert B_{c_{1}}\Vert_{2}}U_{c_{1}}B_{c_{1}}B_{c_{1}}^{T}U_{c_{1}}^{T}$ like in Lemma
\ref{lem:genupd}}

\State$D_{c_{2}}\leftarrow\mathsf{updhss}(D_{c_{2}},U_{c_{2}},\Vert B_{c_{1}}\Vert_{2}I)$\algorithmiccomment{Update generators of $D_{c_{2}}$\newline\mbox{}\hfill to get those of $D_{c_{2}}-\Vert B_{c_{1}}\Vert_{2}U_{c_{2}}U_{c_{2}}^{T}$ like in Lemma
\ref{lem:genupd}}

\Else

\State$D_{c_{1}}\leftarrow\mathsf{updhss}(D_{c_{1}},U_{c_{1}},\Vert B_{c_{1}}\Vert_{2}I)$\algorithmiccomment{Update generators of $D_{c_{1}}$\newline\mbox{}\hfill to get those of $D_{c_{1}}-\Vert B_{c_{1}}\Vert_{2}U_{c_{1}}U_{c_{1}}^{T}$ like in Lemma
\ref{lem:genupd}}

\State$D_{c_{2}}\leftarrow\mathsf{updhss}(D_{c_{2}},U_{c_{2}},\frac{1}{\Vert
B_{c_{1}}\Vert_{2}}B_{c_{1}}^{T}B_{c_{1}})$\algorithmiccomment{Update generators of $D_{c_{2}}$\newline\mbox{}\hfill to get those of $D_{c_{2}}-\frac{1}{\Vert B_{c_{1}}\Vert_{2}}U_{c_{2}}B_{c_{1}}^{T}B_{c_{1}}U_{c_{2}}^{T}$ like in Lemma
\ref{lem:genupd}}

\EndIf\EndIf\EndFor

\For{node $i = 1,\ldots,\text{root}(\mathcal{T})$}\Comment{Form $Z_i$
in a bottom-up traversal} \If{$i$ is a non-leaf node}

\If{$\operatorname{colsize}(B_{c_1})\le \operatorname{rowsize}(B_{c_1})$}\algorithmiccomment{$c_1,c_2$: children of $i$}

\State$Z_{i}\leftarrow\begin{pmatrix}
\frac{1}{\sqrt{\Vert B_{c_{1}}\Vert_{2}}}U_{c_{1}}B_{c_{1}}\\
\sqrt{\Vert B_{c_{1}}\Vert_{2}}U_{c_{2}}\end{pmatrix}
$ \algorithmiccomment{Local update $Z$ matrix like in (\ref{eq:divnew1})}
\Else

\State$Z_{i}\leftarrow\begin{pmatrix}
\sqrt{\Vert B_{c_{1}}\Vert_{2}}U_{c_{1}}\\
\frac{1}{\sqrt{\Vert B_{c_{1}}\Vert_{2}}}U_{c_{2}}B_{c_{1}}^{T}\end{pmatrix}
$ \algorithmiccomment{Local update $Z$ matrix like in (\ref{eq:divnew2})}
\EndIf

\If{$i\neq \text{root}(\mathcal{T})$} \State$U_{i}\leftarrow\begin{pmatrix}
U_{c_{1}}R_{c_{1}}\\
U_{c_{2}}R_{c_{2}}\end{pmatrix}
$ \algorithmiccomment{Assemble $U_i$ for parent node of $i$}
\EndIf\EndIf\EndFor

\State\textbf{return} updated generators $\{D_{i}\}_{i\in\mathcal{T}},\{B_{i}\}_{i\in\mathcal{T}},\{Z_{i}\}_{i\in\mathcal{T}}$

\EndProcedure

\end{algorithmic}
\end{algorithm}

\begin{algorithm}
[ptbh]\caption{Secular equation solution for eigenvalues (of
$\operatorname{diag}(\mathbf{d})+\mathbf{v}\mathbf{v}^T$)}

\begin{algorithmic}
[1]\Procedure{\sf secular}{$\mathbf{d},\mathbf{v}$}

\algorithmiccomment{Eigenvalue solution via the solution of the shifted secular equation (\ref{eq:shiftedeq})}

\State$\mathbf{w\leftarrow v\odot v}$

\State$\mathbf{y}^{(0)}\leftarrow\mathsf{iniguess}(\mathbf{d},\mathbf{w})$\algorithmiccomment{Computation of the initial guess as in \cite{li93}}
\State$\boldsymbol{x}^{(0)}\leftarrow\mathbf{y}^{(0)}+\mathbf{d}$

\For {$j=0,1,\ldots$}

\State$[\boldsymbol{\psi},\boldsymbol{\phi}]\leftarrow\mathsf{trifmm}(\mathbf{d},\mathbf{x}^{(j)},\boldsymbol{y}^{(j)},\mathbf{w},\frac{1}{s-t})$\algorithmiccomment{Computation of $\boldsymbol{\psi},\boldsymbol{\phi}$ in
(\ref{eq:psiphi})}

\State$[\boldsymbol{\psi}^{\prime},\boldsymbol{\phi}^{\prime}]\leftarrow
\mathsf{trifmm}(\mathbf{d},\mathbf{x}^{(j)},\boldsymbol{y}^{(j)},\mathbf{w},\frac{1}{(s-t)^{2}})$\algorithmiccomment{Computation of
$\boldsymbol{\psi}',\boldsymbol{\phi}'$ in (\ref{eq:psiphi2})}

\State$\mathbf{f}\leftarrow\mathbf{e}+\boldsymbol{\psi}+\boldsymbol{\phi}$

\If{$|\mathbf{f}|<cn(\mathbf{e}+|\boldsymbol\psi|+|\boldsymbol\phi|)\epsilon$}
\algorithmiccomment{Stopping criterion} \State\textbf{break}
\EndIf\State$\Delta\mathbf{x}^{(j)}\leftarrow\mathsf{mnewton}(\boldsymbol{\psi
},\boldsymbol{\phi},\boldsymbol{\psi}^{\prime},\boldsymbol{\phi}^{\prime})$

\algorithmiccomment{Computation of root update with modified Newton's method}

\State$\boldsymbol{y}^{(j+1)}\leftarrow\boldsymbol{y}^{(j)}+\Delta
\mathbf{x}^{(j)}$\algorithmiccomment{Updated gap approximation as in (\ref{eq:newupdate})}
\State$\mathbf{x}^{(j+1)}\leftarrow\boldsymbol{y}^{(j+1)}+\mathbf{d}$\algorithmiccomment{Updated eigenvalue approximation} \EndFor

\State$\boldsymbol{\lambda}\leftarrow\mathbf{x}^{(j)}$, $\boldsymbol{\eta
}\leftarrow\boldsymbol{y}^{(j)}$\algorithmiccomment{Eigenvalue and gap
upon convergence}

\State\textbf{return} $\boldsymbol{\lambda},\boldsymbol{\eta}$ \EndProcedure

\end{algorithmic}

\label{alg:secularalg}
\end{algorithm}

\begin{algorithm}
[ptbh]\caption{SuperDC conquering stage}

\begin{algorithmic}
[1]\Procedure{\sf conquer}{$\{D_i\}_{i\in \mathcal{T}},\{U_i\}_{i\in
\mathcal{T}},\{R_i\}_{i\in \mathcal{T}},\{B_i\}_{i\in
\mathcal{T}},\{Z_i\}_{i\in \mathcal{T}}, \tau$}

\algorithmiccomment{The $D_i,B_i$ generators have been updated in the
dividing stage}

\For{node $i = 1, \ldots, \operatorname{root}(\mathcal{T})$}\Comment{Conquering in a postordered traversal}

\If{$i$ is a leaf node}\algorithmiccomment{Leaf-level eigendecomposition}

\State$(\boldsymbol{\lambda}_{i},\hat{Q}_{i})\leftarrow\mathsf{eig}(D_{i})$\algorithmiccomment{Via Matlab {\sf eig} function}

\Else


\State$\begin{pmatrix}
{Z_{i,1}}\\
{Z_{i,2}}\end{pmatrix}
\leftarrow Z_{i}$\algorithmiccomment{Partitioning following the sizes of
$D_{c_1}$ and $D_{c_2}$}

\State${Z_{i,1}}\leftarrow\mathsf{superdcmv}(Q_{c_{1}},{Z_{i,1}},1)$\algorithmiccomment{$Q_{c_1}^T{Z_{i,1}}$}

\State${Z_{i,2}}\leftarrow\mathsf{superdcmv}(Q_{c_{2}},{Z_{i,2}},1)$\algorithmiccomment{$Q_{c_2}^T{Z_{i,2}}$}

\State$Z_{i}\leftarrow\begin{pmatrix}
{Z_{i,1}}\\
{Z_{i,2}}\end{pmatrix}
$\algorithmiccomment{$\hat{Z}_i$ like in (\ref{eq:zupdate})}

\State$[\boldsymbol{\lambda}_{i}^{(0)},P_{i}]\leftarrow\mathsf{sort}(\boldsymbol{\lambda}_{c_{1}},\boldsymbol{\lambda}_{c_{2}})$\algorithmiccomment{Ordering of all the diagonal
entries\newline\mbox{}\hfill of
$\boldsymbol{\lambda}_{c_1},\boldsymbol{\lambda}_{c_2}$ together, with $P_i$
the permutation matrix}


\For {$j=1,2\ldots,r$}\algorithmiccomment{$r=\operatorname{colsize}(Z_i)$}

\State$[\mathbf{d}_{i}^{(j)},Z_{i}(:,j)]\leftarrow\mathsf{deflate}(\boldsymbol{\lambda}_{i}^{(j-1)},Z_{i}(:,j),\tau)$\algorithmiccomment{Deflation (Remark \ref{rem:deflation})}

\State$[\boldsymbol{\lambda}_{i}^{(j)},\boldsymbol{\eta}_{i}^{(j)}]\leftarrow\mathsf{secular}(\mathbf{d}_{i}^{(j)},Z_{i}(:,j))$\algorithmiccomment{Secular equation solution}

\State$\mathbf{v}_{1}\leftarrow\mathsf{trifmm}(\mathbf{d}_{i}^{(j)},\boldsymbol{\lambda}_{i}^{(j)},\boldsymbol{\eta}_{i}^{(j)},\mathbf{e},\log|s-t|)$\algorithmiccomment{$G_1\mathbf{e}$ as needed in (\ref{eq:logv})}

\State$\mathbf{v}_{2}\leftarrow\mathsf{trifmm}(\mathbf{d}_{i}^{(j)},\mathbf{d}_{i}^{(j)},\mathbf{0},\mathbf{e},\log|s-t|)$\algorithmiccomment{$G_2\mathbf{e}$ as needed in (\ref{eq:logv})}

\State$\mathbf{\hat{v}}_{i}^{(j)}\leftarrow\exp{(}\frac{{\mathbf{v}_{1}-\mathbf{v}_{2}}}{2}{)}$\algorithmiccomment{L\"{o}wner's formula for
$\mathbf{\hat{v}}$ as in (\ref{eq:lowner})--(\ref{eq:logv})}

\State$\mathbf{b}_{i}^{(j)}\leftarrow$ $(\mathsf{trifmm}(\mathbf{d}_{i}^{(j)},\boldsymbol{\lambda}_{i}^{(j)},\boldsymbol{\eta}_{i}^{(j)},\mathbf{\hat{v}}_{i}^{(j)}\odot\mathbf{\hat{v}}_{i}^{(j)},\frac{1}{(s-t)^{2}}))^{-1/2}$

\algorithmiccomment{Normalization factor as in (\ref{eq:b})}

\State$\hat{Q}_{i}^{(j)}\leftarrow\{\mathbf{\hat{v}}_{i}^{(j)},\mathbf{b}_{i}^{(j)},\mathbf{d}_{i}^{(j)},\boldsymbol{\lambda}_{i}^{(j)},\boldsymbol{\eta}_{i}^{(j)}\}$\algorithmiccomment{Cauchy-like
structured\newline\mbox{}\hfill representation of the local eigenmatrix as
in (\ref{eq:eigvecs})}

\For{$k=j+1,j+2,\ldots,r$}\algorithmiccomment{Multiplication of
$(\hat{Q}_i^{(j)})^T$\newline\mbox{}\hfill to the remaining columns of $Z_i$ via
the steps as in (\ref{eq:qscale})}


\State$Z_{i}(:,k)\leftarrow\mathbf{\hat{v}}_{i}^{(j)}\odot Z_{i}(:,k)$

\State $Z_{i}(:,k) \leftarrow -\mathsf{trifmm}(\boldsymbol{\lambda}_{i}^{(j)},\mathbf{d}_{i}^{(j)},\boldsymbol{\eta}_{i}^{(j)},Z_{i}(:,k),\frac{1}{s-t})$

\algorithmiccomment{The negative sign
and the switch of \newline\mbox{}\hfill $\boldsymbol{\lambda}_i^{(j)}$ and $\mathbf{d}_i^{(j)}$
are due to the transpose}

\State$Z_{i}(:,k)\leftarrow\mathbf{b}_{i}^{(j)}\odot Z_{i}(:,k)$

\EndFor

\EndFor

\State$\boldsymbol{\lambda}_{i}\leftarrow\boldsymbol{\lambda}_{i}^{(r)}$\algorithmiccomment{Local eigenvalues associated with node $i$}

\EndIf



\EndFor

\State$\boldsymbol{\lambda}\leftarrow\boldsymbol{\lambda}_{\operatorname{root}(\mathcal{T})}$, $Q\leftarrow\{\{\hat{Q}_{i}^{(j)}\}_{j=1}^{r},P_{i}\}_{i\in\mathcal{T}}$\algorithmiccomment{Final eigenvalues\newline\mbox{}\hfill and
eigenmatrix $Q$ in (\ref{eq:eigmat}), with $\hat{Q}_{i}$ in (\ref{eq:ql})
given by $\prod_{j=1}^{r}\hat{Q}_{i}^{(j)}$}

\State\textbf{return} $\boldsymbol{\lambda}, Q$

\EndProcedure

\end{algorithmic}

\label{alg:conqueralg}
\end{algorithm}

\begin{algorithm}
[ptbh]\caption{SuperDC eigenmatrix-vector multiplication}

\begin{algorithmic}
[1]\Procedure{\sf superdcmv}{$Q_ i, \mathbf{x},\text{transpose}$}\algorithmiccomment{Application of a local eigenmatrix
$Q_i$\newline\mbox{}\hfill or its transpose to a vector $\mathbf{x}$,
depending on whether `transpose' is $0$ or $1$}

\State$i_{1}\leftarrow$ smallest descendant of $i$

\If{$\text{transpose} = 0$}\algorithmiccomment{$\mathbf{y} = Q_i\mathbf{x}$}

\State$\mathbf{y}_{i}\leftarrow\mathbf{x}$

\For {$k=i,i-1,\ldots,i_1$}\algorithmiccomment{Reverse postordered traversal
of $\mathcal{T}_i$}

\If{$k$ is leaf}

\State$\mathbf{y}_{k}\leftarrow Q_{k}\mathbf{y}_{k}$\algorithmiccomment{Dense ${Q}_k$ at the leaf level}

\Else

\For {$j=r,r-1,\ldots,1$}\algorithmiccomment{Multiplication of
$\hat{Q}_k^{(j)}$\newline\mbox{}\hfill via the steps like in
(\ref{eq:qscale})}

\State$\mathbf{y}_{k}\leftarrow\mathbf{b}_{k}^{(j)}\odot\mathbf{y}_{k}$

\State$\mathbf{y}_{k}\leftarrow\mathsf{trifmm}(\mathbf{d}_{k}^{(j)},\boldsymbol{\lambda}_{k}^{(j)},\boldsymbol{\eta}_{k}^{(j)},\mathbf{y}_{k},\frac{1}{s-t})$

\State$\mathbf{y}_{k}\leftarrow\mathbf{\hat{v}}_{k}^{(j)}\odot\mathbf{y}_{k}$

\EndFor

\State$\mathbf{y}_{k}\leftarrow P_{k}^{T}\mathbf{y}_{k}$\algorithmiccomment{Permutation like in (\ref{eq:dphateig})}

\State$\begin{pmatrix}
\mathbf{y}_{c_{1}}\\
\mathbf{y}{_{c_{2}}}\end{pmatrix}
\leftarrow\mathbf{y}_{k}$\algorithmiccomment{Partitioning following the
sizes of $Q_{c_1},Q_{c_2}$,\newline\mbox{}\hfill with $c_1,c_2$ the children
of $k$}

\EndIf

\EndFor

\Else\algorithmiccomment{$\mathbf{y} = Q_i^T\mathbf{x}$}

\State Partition $\mathbf{x}$ into $\mathbf{x}_{k}$ pieces following the
leaf-level $Q_{k}$ sizes

\For {$k=i_1,i_1+1,\ldots,i$}\algorithmiccomment{Postordered traversal of
$\mathcal{T}_i$}

\If{$k$ is leaf}

\State$\mathbf{y}_{k}\leftarrow Q_{k}^{T}\mathbf{x}_{k}$\algorithmiccomment{Dense ${Q}_k$ at the leaf level}

\Else

\State$\mathbf{y}_{k}\leftarrow\begin{pmatrix}
\mathbf{y}_{c_{1}}\\
\mathbf{y}{_{c_{2}}}\end{pmatrix}
$\algorithmiccomment{$c_1,c_2$: children of $k$}

\State$\mathbf{y}_{k}\leftarrow P_{k}\mathbf{y}_{k}$\algorithmiccomment{Permutation like in (\ref{eq:dphateig})}

\For {$j=1,2,\ldots,r$}\algorithmiccomment{Multiplication of
$(\hat{Q}_k^{(j)})^T$\newline\mbox{}\hfill via the steps like in
(\ref{eq:qscale})}

\State$\mathbf{y}_{k}\leftarrow\mathbf{\hat{v}}_{k}^{(j)}\odot\mathbf{y}_{k}$

\State$\mathbf{y}_{k}\leftarrow-\mathsf{trifmm}(\boldsymbol{\lambda}_{k}^{(j)},\mathbf{d}_{k}^{(j)},\boldsymbol{\eta}_{k}^{(j)},\mathbf{y}_{k},\frac{1}{s-t})$\algorithmiccomment{The negative sign\newline\mbox{}\hfill
and the switch of $\boldsymbol{\lambda}_k^{(j)}$ and $\mathbf{d}_k^{(j)}$
are due to the transpose}

\State$\mathbf{y}_{k}\leftarrow\mathbf{b}_{k}^{(j)}\odot\mathbf{y}_{k}$

\EndFor

\EndIf\EndFor\EndIf

\State\textbf{return} $\mathbf{y}$

\EndProcedure

\end{algorithmic}

\label{alg:superdcmv}
\end{algorithm}



\begin{thebibliography}{99}

\bibitem {amb13}\textsc{S. Ambikasaran and E. Darve}, \emph{An }$O(n\log
n)$\emph{\ fast direct solver for partial hierarchically semi-separable
matrices}, J. Sci. Comput., 57 (2013), pp. 477--501.

\bibitem {ame15}\textsc{P. Amestoy, C. Ashcraft, O. Boiteau, A. Buttari, J.-Y.
L'Excellent, and C. Weisbecker}, \emph{Improving multifrontal methods by means
of block low-rank representations}, SIAM J. Sci. Comp., 37 (2015), pp. A1451--A1474.

\bibitem {lapack}\textsc{E.~Anderson, Z.~Bai, C.~Bischof, S.~Blackford,
J.~Demmel, J.~Dongarra, J.~Du~Croz, A.~Greenbaum, S.~Hammarling, A.~McKenney,
and D.~Sorensen}, \emph{\ {LAPACK} Users' Guide}, SIAM, Philadelphia, PA,
third~ed., 1999.

\bibitem {benner2012}\textsc{P. Benner and T. Mach}, \emph{Computing all or
some eigenvalues of symmetric $\mathcal{H}_{l}$-matrices}, SIAM J. Sci.
Comput., 34 (2012), pp. A485--A496.

\bibitem {bini2005}\textsc{D. A. Bini, L. Gemignani, and V. Y. Pan},
\emph{Fast and stable QR eigenvalue algorithms for generalized companion
matrices and secular equations}, Numer. Math., 100 (2005), pp. 373--408.

\bibitem {bin91}\textsc{D. Bini and V. Y. Pan}, \emph{Parallel complexity of
tridiagonal symmetric eigenvalue problem}, in Proceedings of the 2nd Annual
ACM-SIAM Symposium on Discrete Algorithms, SIAM, Philadelphia, 1991 pp. 384--393.

\bibitem {bun78}\textsc{J.~R.~Bunch, C.~P.~Nielsen, and D.~C.~Sorensen},
\emph{Rank-one modification of the symmetric eigenproblem}, Numer. Math., 31
(1978), pp. 31--48.



\bibitem {fmm1d}\textsc{D.~Cai and J.~Xia}, \emph{A stable matrix version of
the fast multipole method: stabilization strategies and examples}, Electron.
Trans. Numer. Anal., under revision, 2021.

\bibitem {cha05}\textsc{S. Chandrasekaran, P. Dewilde, M. Gu, T. Pals, X. Sun,
A.-J. van der Veen, and D. White}, \emph{Some fast algorithms for sequentially
semiseparable representations}, SIAM J. Matrix Anal. Appl., 27 (2005), pp. 341--364.

\bibitem {cha06}\textsc{S. Chandrasekaran, P. Dewilde, M. Gu, W. Lyons, and T.
Pals}, \emph{A fast solver for HSS representations via sparse matrices}, SIAM
J. Matrix Anal. Appl., 29 (2006), pp. 67--81.

\bibitem {cha04}\textsc{S.~Chandrasekaran and M.~Gu}, \emph{A
divide-and-conquer algorithm for the eigendecomposition of symmetric block
diagonal plus semiseparable matrices}, Numer. Math., 96 (2004), pp. 723--731.

\bibitem {toep}\textsc{S. Chandrasekaran, M. Gu, X. Sun, J. Xia, and J. Zhu},
\emph{A superfast algorithm for Toeplitz systems of linear equations}, SIAM J.
Matrix Anal. Appl., 29 (2007), pp. 1247--1266.

\bibitem {cha2007}\textsc{S. Chandrasekaran, M. Gu, J. Xia, and J. Zhu},
\emph{A fast QR algorithm for companion matrices}, in Recent Advances in
Matrix and Operator Theory, Oper. Theory Adv. Appl., Birkhaeuser Basel, 179
(2007), pp. 111--143.

\bibitem {cuppen1980}\textsc{J. J. M. Cuppen}, \emph{A divide and conquer
method for the symmetric tridiagonal eigenproblem}, Numer. Math., 36 (1981),
pp. 177--195.

\bibitem {dem97}\textsc{J.~W.~Demmel}, \emph{Applied Numerical Linear
Algebra}, SIAM, 1997.

\bibitem {dong87}\textsc{J.~J.~Dongarra and D.~C.~Sorensen}, \emph{A fully
parallel algorithm for the symmetric eigenvalue problem}, SIAM J. Sci. Stat.
Comput., 8(2), s139--s154.



\bibitem {eidelman2005}\textsc{Y. Eidelman, I. Gohberg, and V. Olshevsky},
\emph{The QR iteration method for Hermitian quasiseparable matrices of an
arbitrary order}, Linear Algebra Appl., 404 (2005), pp. 305-324.

\bibitem {eid12}\textsc{Y. Eidelman and I. Haimovici}, \emph{Divide and
conquer method for eigenstructure of quasiseparable matrices using zeroes of
rational matrix functions}, Operator Theory: Advances and Applications, 218
(2012), Springer, Basel, pp. 299--328.

\bibitem {gol73}\textsc{G.~H.~Golub}, \emph{Some modified matrix eigenvalue
problems}, SIAM Rev., 15 (1973), pp. 318--334.



\bibitem {gre87}\textsc{L. Greengard and V. Rokhlin}, \emph{A fast algorithm
for particle simulations}, J. Comput. Phys., 73 (1987), pp. 325--348.

\bibitem {gu95}\textsc{M.~Gu and S.~C.~Eisenstat}, \emph{A divide-and-conquer
algorithm for the symmetric tridiagonal eigenproblem}, SIAM J. Matrix Anal.
Appl., 16 (1995), pp. 79--92.

\bibitem {li93}\textsc{R. C.~Li}, \emph{Solving secular equations stably and
efficiently}, University of California, Berkeley, Technical Report No.
UCB/CSD-94-851 (1994).

\bibitem {hack2002h2}\textsc{W.~Hackbusch and S.~Borm}, \emph{Data-sparse
approximation by adaptive $\mathcal{H}^{2}$-matrices}, Computing, 69 (2002), pp.1--35.

\bibitem {hack1999}\textsc{W.~Hackbusch}, \emph{A sparse matrix arithmetic
based on }$\mathcal{H}$\emph{-matrices}, Computing, 62 (1999), pp. 89--108.

\bibitem {lia16}\textsc{X. Liao, S. Li, L. Cheng, and M. Gu}, \emph{An
improved divide-and-conquer algorithm for the banded matrices with narrow
bandwidths}, Comput. Math. Appl., 71 (2016), pp. 1933--1943.

\bibitem {parhssrsmf}\textsc{X.~Liu, J.~Xia, and M.~V. De~Hoop},
\emph{Parallel randomized and matrix-free direct solvers for large structured
dense linear systems}, SIAM J. Sci. Comput., 38 (2016), pp.~S508--S538.

\bibitem {mar11}\textsc{P.~G.~Martinsson}, \emph{A fast randomized algorithm
for computing a hierarchically semiseparable representation of a matrix}, SIAM
J. Matrix Anal. Appl., 32 (2011), pp.~1251--1274.

\bibitem {mar05}\textsc{P. G. Martinsson, V. Rokhlin, and M. Tygert}, \emph{A
fast algorithm for the inversion of general Toeplitz matrices}, Comput. Math.
Appl., 50 (2005), pp. 741--752.

\bibitem {ole90}\textsc{D. P. O'Leary and G. W. Stewart}, \emph{Computing the
eigenvalues and eigenvectors of symmetric arrowhead matrices}, J. Comput.
Phys., 90 (1990), pp. 497--505.

\bibitem {sun2001}\textsc{X.~Sun and N. P. Pitsianis}, \emph{A matrix version
of the fast multipole method}, SIAM Review, 43 (2001), pp.~289--300.

\bibitem {sus21}\textsc{A. \v{S}u\v{s}njara and D. Kressner}, \emph{A fast
spectral divide-and-conquer method for banded matrices}, Numer. Linear Algebra
Appl., 28 (2021), e2365.

\bibitem {van2010}\textsc{M. Van Barel, R. Vandebril, P. Van Dooren, and K.
Frederix}, \emph{Implicit double shift QR-algorithm for companion matrices},
Numer. Math., 116 (2010), pp. 177--212.

\bibitem {van08}\textsc{R. Vandebril, M. Van Barel, and N. Mastronardi},
\emph{Matrix Computations and Semiseparable Matrices}, Vol. 1. Johns Hopkins
University Press, Baltimore, MD, 2008.

\bibitem {var93}\textsc{J.~M.~Varah}, \emph{The prolate matrix}, Linear
Algebra and its Applications, 187 (1993), pp. 269--278.

\bibitem {hsseig}\textsc{J. Vogel, J. Xia, S. Cauley, and V. Balakrishnan},
\emph{Superfast divide-and-conquer method and perturbation analysis for
structured eigenvalue solutions}, SIAM J. Sci. Comput., 38 (2016), pp. A1358--A1382.

\bibitem {superdcapp}\textsc{J. Vogel, J. Xia, Z. Xin, and X. Ou},
\emph{Structured numerical computations via superfast eigenvalue
decompositions}, under preparation, 2021.

\bibitem {hssstability}\textsc{Y.~Xi and J.~Xia}, \emph{On the stability of
some hierarchical rank structured matrix algorithms}, SIAM J. Matrix Anal.
Appl., 37 (2016), pp.~1279--1303.

\bibitem {toepls}\textsc{Y. Xi, J. Xia, S. Cauley, and V.~Balakrishnan},
\emph{Superfast and stable structured solvers for toeplitz least squares via
randomized sampling}, SIAM Journal on Matrix Analysis and Applications, 35
(2014), pp.~44--72.

\bibitem {xi2014}\textsc{Y.~Xi, J.~Xia and R.~Chan}, \emph{A fast randomized
eigensolver with structured LDL factorization update}, SIAM J. Matrix Anal.
Appl., 35 (2014), pp. 974--996.

\bibitem {phdthesis}\textsc{J. Xia}, \emph{Fast Direct Solvers for Structured
Linear Systems of Equations}, Ph.D. thesis, University of California,
Berkeley, 2006.

\bibitem {hsscost}\textsc{J. Xia}, \emph{On the complexity of some
hierarchical structured matrix algorithms}, SIAM J. Matrix Anal. Appl., 33
(2012), pp.~388--410.

\bibitem {mfhssrs}\textsc{J. Xia}, \emph{Randomized sparse direct solvers},
SIAM J. Matrix Anal. Appl., 34 (2013), pp. 197--227.

\bibitem {mhs}\textsc{J.~Xia}, \emph{Multi-layer hierarchical structures},
CSIAM Trans. Appl. Math., 2 (2021), pp. 263--296.

\bibitem {cse21}\textsc{J. Xia}, \emph{Superfast divide-and-conquer Hermitian
eigenvalue solutions}, Presentation in SIAM CSE21, 2021.

\bibitem {fasthss}\textsc{J. Xia, S. Chandrasekaran, M. Gu, and X. S. Li},
\emph{Fast algorithms for hierarchically semiseparable matrices}, Numer.
Linear Algebra Appl., 17 (2010), pp. 953--976.

\bibitem {toeprs}\textsc{J. Xia, Y. Xi, and M. Gu}, \emph{A superfast
structured solver for Toeplitz linear systems via randomized sampling}, SIAM
J. Matrix Anal. Appl., 33 (2012), pp. 837--858.

\bibitem {kercompr}\textsc{X. Ye, J. Xia, and L. Ying}, \emph{Analytical
low-rank compression via proxy point selection}, SIAM J. Matrix Anal. Appl.,
41 (2020), pp. 1059--1085.
\end{thebibliography}
\end{document}